\numberwithin{equation}{section}
\renewcommand{\v}{\vec v}
\newcommand{\Vol}{\text{Vol}}
\newcommand{\dR}{\mathds{R}}
\newcommand{\R}{\mathbb{R}}
\newcommand{\N}{\mathbb{N}}
\newcommand{\Cr}{\mathcal{C}}
\newcommand{\cC}{\mathcal{C}}
\newcommand{\cS}{\mathcal{S}}
\newcommand{\cW}{\mathcal{W}}
\newcommand{\cN}{\mathcal{N}}
\newcommand{\norm}[1]{\left\|#1\right\|}
\newcommand{\ps}[2]{\left\langle#1\middle\vert#2\right\rangle}
\newcommand{\ton}[1]{\left(#1\right)}
\newcommand{\qua}[1]{\left[#1\right]}
\newcommand{\cur}[1]{\left\{#1\right\}}
\newcommand{\abs}[1]{\left|#1\right|}
\newcommand{\dive}[1]{\operatorname{div}\ton{#1}}
\renewcommand{\L}{\mathcal{L}}
\begin{document}

\newtheorem{theorem}{Theorem}[section]
\newtheorem{ctheorem}[theorem]{Conjectural Theorem}

\newtheorem{proposition}[theorem]{Proposition}

\newtheorem{lemma}[theorem]{Lemma}
\newtheorem{clemma}[theorem]{Conjectural Lemma}

\newtheorem{corollary}[theorem]{Corollary}

\theoremstyle{definition}
\newtheorem{definition}[theorem]{Definition}

\theoremstyle{remark}
\newtheorem{remark}[theorem]{Remark}

\theoremstyle{remark}
\newtheorem{example}[theorem]{Example}

\theoremstyle{remark}
\newtheorem{note}[theorem]{Note}

\theoremstyle{definition}
\newtheorem{notation}[theorem]{Notation}

\theoremstyle{remark}
\newtheorem{question}[theorem]{Question}

\theoremstyle{remark}
\newtheorem{conjecture}[theorem]{Conjecture}

\title{Critical sets of elliptic equations}

\author{Jeff Cheeger\thanks{The author was partially supported by NSF grant DMS1005552} and Aaron Naber\thanks{The author was partially supported by NSF postdoctoral grant 0903137} and Daniele Valtorta}

\date{}
\maketitle
\begin{abstract}
 Given a solution $u$ to a linear homogeneous second order elliptic equation with Lipschitz
 coefficients, we introduce techniques for giving improved estimates of the critical set $\Cr(u)\equiv \{x:|\nabla u|(x)=0\}$, 
as well as the first estimates on the effective critical set $\cC_r(u)$, 
which roughly consists of points $x$ such that the gradient of $u$
 is large on $B_r(x)$ compared to the size of $u$.  The results are 
new even for harmonic functions on $\dR^n$.  Given such a $u$, 
the standard {\it first order} stratification $\{\cS^k\}$ of $u$ separates
 points $x$ based on the degrees of symmetry of the leading order polynomial of $u-u(x)$.
  In this paper we give a quantitative stratification $\{\cS^k_{\eta,r}\}$ of $u$, which separates
 points based on the number of {\it almost} symmetries of {\it approximate} leading order
 polynomials of $u$ at various scales.  We prove effective estimates on the volume of
 the tubular neighborhood of each $\cS^k_{\eta,r}$, which lead directly to $(n-2+\epsilon)$-Minkowski type estimates for the critical set of $u$.  With some additional regularity
 assumptions on the coefficients of the equation, we refine the estimate to give
new proofs of uniform $(n-2)$-Hausdorff 
measure estimate on the
critical set and singular sets of $u$. 
%




\end{abstract}

{\small \tableofcontents}
\clearpage

\section{Introduction}

In this paper, we study solutions $u$ to second order linear homogeneous elliptic equations on
subsets of $\dR^n$ and on manifolds with both Lipschitz and smooth coefficients.  
We introduce new quantitative stratification techniques in this context, based on those
 first introduced in \cite{ChNa1,ChNa2}. These techniques allow for new estimates on the critical set
\begin{align}
\Cr(u)\equiv\{x:|\nabla u|=0\}\, 
\end{align}
and more importantly on the {\it effective critical set} 
\begin{align}\label{e:effective_critical}
\cC_r(u)\equiv\cur{x: \inf_{B_r(x)}|\nabla u|^2 < \frac{\epsilon(n)}{r^2}\fint_{\partial B_{2r}(x)} |u-u(x)|^2}\, ,
\end{align}
where $\epsilon(n)$ is a small fixed constant.  That is, if $x\not\in\cC_r(u)$ then not only 
do we have $|\nabla u|(x)\not\eq 0$, but in fact the gradient has some definite size in a ball of definite size around $x$. 

Though most of our results require only a Lipschitz bound on the coefficients, even when applied to harmonic functions on $\dR^n$, the effective estimates are new.  
The Lipschitz bound is sharp in the sense that the results are false under a H\"older assumption.

Because the techniques are local and do not depend on the underlying space on which the equations are 
defined, we will often restrict ourselves to the unit ball $B_1(0)\subseteq \dR^n$.  However, we will point out
  the appropriate modifications needed in the more general situations.  To be specific, we will study equation
s of the form
\begin{gather}\label{eq_Lu}
\L(u)=\partial_i(a^{ij}(x)\partial_j u) + b^i(x) \partial _i u=0
\end{gather}
and
\begin{gather}\label{eq_Lu2}
\L(u)=\partial_i(a^{ij}(x)\partial_j u) + b^i(x) \partial _i u + c(x)u=0\, .
\end{gather}
We will assume  that the coefficients $a^{ij}$ are elliptic and uniformly Lipschitz, and that $b^i$, $c$ are bounded:
\begin{align}\label{e:coefficient_estimates}
(1+\lambda)^{-1}\delta^{ij}\leq a^{ij}\leq (1+\lambda)\delta^{ij}, \, \text{Lip}(a^{ij})\leq \lambda\, , \,
|b^i|,\abs c \leq \lambda\, .
\end{align}

The function $u$ always denotes a weak solution to \eqref{eq_Lu} or \eqref{eq_Lu2}.
Standard elliptic estimates imply that $u\in C^{1,\alpha}$.  Note that if we are interested in
 studying the critical set $\Cr(u)$ then
Lipschitz continuity of the coefficients is essentially the weakest possible regularity assumption
that  we can make.  Indeed, A. Pli\'s (see \cite{plis}) found counterexamples to the unique
 continuation principle for solutions of elliptic equations similar to \eqref{eq_Lu}, where the 
coefficients $a^{ij}$ are H\"older continuous with any exponent strictly smaller than $1$. 
 In such a situation, no reasonable estimates for $\Cr(u)$ 
can hold.

Next, we will give some informal statements of our results; see Sections \ref{ss:quantstrat} 
and \ref{ss:mainresults} for more accurate statements. In the course of doing this, we
will also give a brief review of what was
previously known.  

\paragraph{Harmonic Functions} For simplicity we begin by discussing harmonic functions $\Delta u =0$ on $B_1(0)$.  
The standard fact that such a function is analytic
implies without difficulty that $H^{n-2}(\mathcal C(u)\cap B_{1/2}) <\infty$, if $u$ is not a constant.  

Quantitatively, the standard measurement of {\it nonconstant} behavior of $u$ on a ball $B_r(x)$ is an upper bound on the normalized Almgren frequency defined as:
\begin{align}\label{d:frequency}
\bar N^u(x,r)\equiv\frac{r\int_{B_r(x)} |\nabla u|^2 dV}{\int_{\partial B_r(x)} (u-u(x))^2}\, .
\end{align}
By unique continuation, if $u$ is not constant then both $N^u$ and $\bar N^u$ are 
well defined for positive $r$.  These definitions suggest that harmonic functions might satisfy an estimate of the form
 \begin{gather}\label{eq_estcrit}
H^{n-2}(\Cr(u)\cap B_{1/2})< C(n,\bar N^u(0,1))
 \end{gather}
 In other words, if $u$ is bounded away from being a constant by a definite amount, then 
the critical set can only be so large in the $(n-2)$-Hausdorff sense.  Such an estimate has
 been proved  for the {\it singular set}, i.e. if one restricts to a level set of $u$.  
That is, $H^{n-2}(\Cr(u)\cap B_{1/2}\cap \{u=const\})< C(n,\bar N^u(0,1))$; see \cite{hanhardtlin}.  The paper,   \cite{HLrank}, gives an estimate of this form for the
rank zero sets of harmonic maps. 
 The techniques of  \cite{HLrank}  can  be used to treat case of
sets for equations of the form (\ref{eq_Lu}) (although  this is not pointed out explicitly in
\cite{HLrank}).  In
Theorem \ref{th_Ha} we give a new proof of this bound based on the 
quantitative estimates of Theorem \ref{t:quantstrat}.
(For a slightly earlier proof of the {\it local} finiteness of the $(n-2)$-dimensional
Hausdorff measure of the critical set for equations of the form (\ref{eq_Lu}), see \cite{hoste}.)  More generally the results briefly outlined for harmonic functions hold verbatim for solutions of second order equations with sufficiently smooth coefficients.

In this paper, our main focus is on more effective versions of (\ref{eq_estcrit}).  The estimate
 (\ref{eq_estcrit}) is less than optimal in two primary respects. For general subsets,   a
bound on the 
$(n-2)$-dimensional Hausdorff measure 
 does not prevent the subset from being dense. In fact, even if such a subset is closed, it can
 still be arbitrarily dense. 
Our first statement's tell us that not only is $\Cr(u)$ small, but the tube $B_r(\Cr(u))$ has $(n-2)$-small volume 
for every $r$ in the form of $\Vol(B_r(\Cr(u)))<C_\epsilon r^{2-\epsilon}$ for every $\epsilon$,
 see Theorem \ref{t:crit_lip} for a precise statement.  This is a much stronger statement, which  leads to Minkowski dimension estimates.
 Secondly, as will be seen in Section \ref{ss:mainresults} what we control is 
not just the critical set but the {\it effective} critical set.  That is, we show in Theorem \ref{t:crit_lip} that away from a set of small $(n-2-\epsilon)$-volume (for all $\epsilon$),
every point has a ball of 
definite size in which the gradient has some definite size relative the the nonconstancy of the solution. 
  For details, see subsections \ref{ss:quantstrat}
 and \ref{ss:mainresults}.

  

\paragraph{Lipschitz elliptic equations} In reality, the technical heart of this paper concerns
 solutions of elliptic equations with Lipschitz coefficients.  Most of our results, even in the
 smooth coefficient cases, are relatively easy consequences of those in the case where only
 assuming Lipschitz continuity of the coefficients is required.  For example, it is known, see
 \cite{lin}, that $\Cr(u)\cap B_{1/2}$ has Hausdorff dimension $\dim_{Haus}(\Cr(u)\cap B_{1/2})\leq n-2$.
 Although we are not able to improve this to an effective finiteness, we do make advances in two directions.  
First, for all $\epsilon>0$, we do show effective volume estimates of the form
\begin{align}
\label{e:ve}
\Vol(B_r(\Cr(u))\leq \Vol(B_r(\Cr_r(u))\cap B_{1/2})<C(n,\bar N^u(0,1),\epsilon)r^{2-\epsilon}\, .
\end{align}
Among other things this improves $\dim_{Haus}\Cr(u)=n-2$ to $\dim_{Min}\Cr(u)=n-2$.  
That is, the Minkowski dimension of the critical set is at most $n-2$, see Section \ref{ss:quantstrat} 
for precise statements. What is more important, this gives effective estimates for the volume of tubes 
around the critical set, so that even without bounds on $H^{n-2}(\Cr(u))$ in the Lipschitz case, we still have  
very definite effective control over the size of the critical set.  More than that, the corresponding estimate on the effective critical set tells us that away from a $r$-tube of definite volume,
 we have in every ball $B_r(x)$ that $u$ looks close to a linear function after normalization.

The primary technical construction needed to generalize from the harmonic case to the general 
elliptic case is the {\it generalized frequency} $\bar F(r)$ of Section \ref{sec_freqell}.  This is an almost
 monotone quantity, in the sense that $e^{Cr}\bar F(r)$ is monotone nondecreasing on some interval
 $(0,r_0)$; see  Theorem \ref{th_Nellmon}. 
The function $\bar F(r)$ plays the same role as the frequency for harmonic functions.  The generalized frequency of Section \ref{sec_freqell} is a variation on a generalized 
frequency constructed in \cite{galin1,galin2}, which is shown there to be almost monotone
 for operators in divergence form.  Although one can use tricks as in \cite{lin} to apply 
this to nondivergence form operators, instead, by modifying the proof in 
 \cite{galin1,galin2}, we show 
directly in Section \ref{sec_freqell} that the frequency $\bar F$ is almost monotone for 
all operators of the form \eqref{eq_Lu}, which is required for proving (\ref{e:ve}).

\paragraph{Quantitative stratification} More precisely, our primary contribution is the introduction and
 analysis of a quantitative stratification; see Section \ref{ss:quantstrat}. 
The standard stratification separates points $x$ in the domain of $u$,  according to the number of independent symmetries
of  the leading order polynomial 
of the Taylor expansion of $u-u(x)$; see \cite{hanlin}.  In particular, this stratification does  not take into account
the degree of the  leading order Taylor polynomial at $x$.
 More precisely, $\cS^k$ consists of those points $x$ such that the leading 
order polynomial $P(y)$ of $u(y)-u(x)$ is a function of at least $n-k$ variables.  For instance, if $u$ has nonvanishing
 gradient at $x$, then the leading order polynomial is linear and therefore $x\in \cS^{n-1}$.

In a manner similar to \cite{ChNa1} and \cite{ChNa2}, we will define a quantitative stratification which
 refines
  the standard stratification. 
  Very roughly, for a fixed $r,\eta>0$ this stratification separates points $x$ based on the number of independent
$\eta$-{\it almost} symmetries 
of an approximate leading order polynomials of $u-u(x)$ at scales  $\geq r$; for a precise definition,
see Section \ref{ss:quantstrat}.

\label{ref_Mink_2} The essential point of this paper is to prove  volume estimates 
on the quantitative stratification, 
as opposed to the
 weaker Hausdorff estimates on the standard stratification.  
As in \cite{ChNa1,ChNa2} these estimates require new techniques which provide 
a quantitative replacement for more traditional blow up arguments.  The new techniques work under Lipschitz constraints
 on the coefficients and, in particular, these arguments give new proofs of the original Hausdorff estimates.  

The key ideas involved in proving the estimates for the quantitative stratification
are {\it quantitative differentiation}, the {\it frequency decomposition} (for the {\it generalized frequency}) which plays the role 
the energy played in \cite{ChNa1,ChNa2}) and {\it cone splitting}.

In general,  precise cone-splitting is the principle that in the presence
of conical structure, nearby symmetries
 interact to create  additional  symmetries.  In the present context, ``$0$-symmetry''
 plays the role of conical structure.
We say that a function $f$  is 
$0$-symmetric at a point, if for some $d>0$, it is homogeneous of degree $d$ at that point. 
 If $f$ is homogeneous of degree $d$ with respect to two distinct points, it follows that $f$ is constant on
lines parallel to the one joining these points  and hence, that $f$ is actually a function of at most $n-1$ variables. 
\footnote{To see this, note that
if $f(x_1,\ldots x_n)$ is homogeneous of degree $d$ with respect to the points $(0,\ldots,0)$
and $(a_1,\ldots,a_n)$, then
$\sum_i x_i\partial_i(f)=\sum_i (x_i-a_i)\partial_i(f)
=d\cdot f$, and so $\sum_i a_i\partial_i(f)=0$. 
}  
 In our terminology, we can rephrase this by saying that if a function is $0$-symmetric at 
two distinct points, then the function is actually $1$-symmetric. We call 
 cone-splitting (as opposed to precise cone-splitting)
 a quantitative version of the above statement.   (In \cite{ChNa2}
 the splitting principle was applied to functions that were simply $0$-homogeneous, that 
is, radially invariant). The frequency decomposition will exploit this by decomposing
 the space $B_1(0)$ based on which scales $u$ looks almost $0$-symmetric.
  On each such piece of the decomposition, and at every scale, nearby points automatically either force
 higher order symmetries or a good covering of the space, and thus the
 estimates of this paper can be proved easily on each piece of the decomposition. 
 The final theorem is obtained by noting that there are far fewer pieces to 
the decomposition than might  {\it apriori} seem possible, a result which follows
 from a {\it quantitative differentiation} argument. 

The Hausdorff estimates on the critical sets of solutions of \eqref{eq_Lu} with
 smooth coefficients will be gotten by combining the estimates on
 the quantitative stratification with an $\epsilon$-regularity type theorem from \cite{hanhardtlin}.

\subsection{The First-Order Stratification}

Even though we will not use the standard stratification in this article, it seems
 appropriate to recall briefly its definition and main properties.
 This should help the reader understand the philosophy underlying the quantitative stratification.

The appropriate notion of stratification in our context is based on 
first order tangent behavior as opposed to the stratifications considered
 in \cite{ChNa1,ChNa2}, which were based on  zeroth order behavior. 
 Specifically, let us first be more careful about the notion of {\it tangent behavior}
 in this context.  We will make all definitions on $\dR^n$, though the analogous 
definitions on manifolds are the same up to the use of an exponential map; 
for example, see \cite{ChNa2}.  We will usually need to work under an assumption
 of nondegeneracy in order to make sense of the tangential behavior:

\begin{definition}
We call a smooth function $u$ nondegenerate if at every $x$ some derivative of some order is nonzero.
\end{definition}
In particular, according to this definition, a constant function is degenerate. 
(This is consistent with the fact that this is a first order stratification). 
On the other hand, any nonconstant analytic function is nondegenerate.  
We now define our tangent maps: 
\begin{definition}\label{d:tangent}
Let $u:B_1(0)\to \dR$ be a smooth nondegenerate function and $r>0$.  
Then we make the following definitions
\begin{enumerate}
\item For $x\in B_{1-r}(0)$ we define
\begin{gather}
 T_{x,r}u(y) = \frac{u(x+ry)-u(x)}{\ton{\fint_{\partial B_1(0)} (u(x+ry)-u(x))^2 }^{1/2}}\, .
\end{gather}
If the denominator vanishes, we set $T_{x,r}=\infty$.
\item For $x\in B_1(0)$ we define
\begin{gather}
 T_{x,0}u(y)=T_x u(y)=\lim_{r\to 0} T_{x,r}u(y)\, .
\end{gather}
\end{enumerate}
\end{definition}

Note that the limits above exist at $x$ as long as $u$ is nondegenerate at $x$. 
 In that case, the limit is unique and, up to rescaling, $T_xu$ is just the leading 
order polynomial of the Taylor expansion of $u-u(x)$ at $x$.  
In particular, $T_xu$ is a homogeneous polynomial, and if $u$ satisfies a second 
order elliptic equation then this polynomial is a homogeneous solution to the 
constant coefficient equation $a^{ij}(x)\partial_i\partial_j T_x=0$.  Hence, up to
 a linear change of coordinates is a homogeneous harmonic polynomial. 
\begin{remark}
For the sake of simplicity, when studying solutions to \eqref{eq_Lu} we will modify
 the definition of $T_{x,r}$ using this linear change of coordinates
 (see Definition \ref{deph_Tell}). In this way, $T_{x,0}u$ will be a 
harmonic homogeneous polynomial. Since the change of variables
 has bi-Lipschitz constant depending only on $\lambda$, from 
the point of view of our results there is no significant difference between these two definitions. 
\end{remark}

Next, we specify what it means for a function to be symmetric,
a key point in the definition of the stratification.

\begin{definition}
Let $u:\dR^n\to \dR$ be a smooth function:
\begin{enumerate}
\item We say $u$ is $0$-symmetric if $u$ is a homogeneous polynomial.
\item We say $u$ is $k$-symmetric if $u$ is $0$-symmetric and there exists a $k$-dimensional
subspace $V$ such that for every $x\in \dR^n$ and $y\in V$ we have that $u(x+y)=u(x)$.
\end{enumerate}
\end{definition}

We can now define the first-order stratification associated to $u$:   

\begin{definition}
Given a smooth nondegenerate function $u:B_1(0)\to \dR$ we define the $k^{th}$-singular stratum of $u$ by
\begin{align}
\cS^k(u)\equiv \{x: T_xu \text{ is not k+1-symmetric}\}\, .
\end{align}
\end{definition}

Let us make a few remarks about some unusual features of this stratification.
They arise from the fact that it  is a {\it first order} stratification.  To begin with, it is usually the case in a stratification that $\cS^{n-1}$ 
has measure zero, that is, that almost every point has $n$-degrees of symmetry.  The issue in general is that for almost every point 
of a nondegenerate function $u$, we have that $T_x u$ is a linear function.  Hence, almost every point has $n-1$ degrees of symmetry,
 and so, $\cS^{n-1}$ has full measure and $\dim \cS^{n-1}=n$.  
Despite this circumstance, for solutions of \eqref{eq_Lu} and for $k\leq n-2$, we will recover the estimate $\dim \cS^k\leq k$,
 where $\dim$ denotes Hausdorff (or even Minkowski) dimension.

\begin{remark}
 The smoothness assumption on $u$ is a sufficient condition to define the standard stratification, but not a necessary one. 
Indeed, even though solutions to \eqref{eq_Lu} with \eqref{e:coefficient_estimates} are in general only $C^{1,\alpha}$, by 
unique continuation and the maximum principle it is easy to see that for positive $r$, $T_{x,r}u$ is still well-defined and finite.
 
Moreover, by the uniqueness of the tangent maps proved in \cite[theorem 3.1]{han_sing} \footnote{Note that this theorem 
requires as an additional assumption that $u$ does not vanish at infinite order at $x$, which is guaranteed in our context}, 
also $T_{x,0}u$ is well-defined for all $x$.

\end{remark}

\subsection{The Quantitative Stratification}\label{ss:quantstrat}

Notice that for solutions to \eqref{eq_Lu} the total singular set $\cS^{n-2}$ is precisely the critical points of $u$, 
namely the points where $|\nabla u|=0$. The goal of this paper is to prove refined estimates on $\cS$ when $u$ 
is not only a nondegenerate function, but also satisfies an elliptic equation.  To do this, an important step is to 
quantify the stratification of the last subsection.  For solutions of elliptic equations, we will prove effective Minkowski type estimates for this 
quantitative stratification.  

To define the quantitative stratification we begin with the following quantitative version of symmetry. 
 Recall the definition of $k$-symmetric and $T_{x,r} u$ from the last subsection.

\begin{definition}
Let $u:B_1(0)\to \dR$ be an $L^2$ function.  We say that $u$ is $(k,\epsilon,r,x)$-symmetric
if there exists a $k$-symmetric polynomial $P$ with $\fint_{\partial B_1(0)} |P|^2=1$ such that
\begin{align}\label{eq_rip}
\fint_{B_1(0)}|T_{x,r}u -P|^2<\epsilon\, .
\end{align}
\end{definition}
\begin{remark}
Note that for harmonic functions and for solutions to \eqref{eq_Lu}, it would make no significant 
difference if we added the assumption that the polynomial $P$ is harmonic. Moreover, we can also replace
 the inequality \eqref{eq_rip} with
\begin{align}\label{eq_rep}
\fint_{\partial B_1(0)}|T_{x,r}u -P|^2<\epsilon'\, .
\end{align}
Indeed, by the doubling conditions in \cite[Corollary 2.2.7]{hanlin}, relation 
\eqref{eq_rep} implies that $u$ is $(k,\epsilon'/n,r,x)$-symmetric. The converse also holds
with the proviso that  in this case, $\epsilon'$ depends on $\epsilon$, $n$ and also on $\bar N^u(0,1)$. 
 Given the definition of frequency function in \eqref{d:frequency}, it is easy to 
see why this second definition is more convenient to use in case $u$ is harmonic, or more 
generally a solution to \eqref{eq_Lu}.
\end{remark}

The above gives a quantitative way of stating that $u$ is {\it almost} $k$-symmetric on $B_r(x)$.  
We are now in a position to define the quantitative stratification:

\begin{definition}
Let $u:B_1(0)\to \dR$ be an $L^2$ function.  Then we define the $(k,\eta,r)$-effective singular stratum by
\begin{align}
\cS^k_{\eta,r}\equiv \{x\in B_1(0): u\text{ is not }(k+1,\eta,s,x)\text{-symmetric} \ \forall s\geq r\}\, .
\end{align}
\end{definition}

The following  properties of the quantitative stratification are immediate.  To begin with,
\begin{align}
\cS^k_{\eta,r}\subseteq \cS^{k'}_{\eta',r'} \text{ if } (k'\leq k, \eta'\leq\eta, r\leq r')\, .
\end{align}
In addition,  we can recover the standard stratification by
\begin{align}
\cS^k = \bigcup_{\eta}\bigcap_{r} \cS^k_{\eta,r}\, .
\end{align}

Our first main result is the following effective Minkowski estimate for $\cS^k_{\eta,r}$,
which holds under the assumption of a frequency bound on $u$, see \eqref{d:frequency}. 
 In particular, we will see that this immediately implies Minkowski dimension control of the 
critical set for solutions of \eqref{eq_Lu}.

\begin{theorem}\label{t:quantstrat}
Let $u:B_1(0)\to\dR$ satisfy \eqref{eq_Lu} and \eqref{e:coefficient_estimates} weakly with
 $\bar N^u(0,1)\leq \Lambda$.  Then
\begin{enumerate}
\item For every $\eta>0$ and $k\leq n-2$ we have 
\begin{align}
{\rm Vol}\ton{B_r(\cS^k_{\eta,r})\cap B_{1/2}(0)}\leq C(n,\lambda,\Lambda,\eta) r^{n-k-\eta}\, .
\end{align}
\item For every $\epsilon>0$ and $0\leq \alpha<1$ there exists $\bar\eta(n,\epsilon,\alpha,\lambda,\Lambda)$ 
such that if $x\not\in \cS^{n-2}_{\eta,r}$ with $\eta<\bar\eta$ then there exists a
 linear function $L(x)$ with $\fint_{\partial B_1(0)}|L|^2=1$ such that $||T_{x,r}u-L||_{C^{1,\alpha}(B_{1/2}(0))}<\epsilon$.
\end{enumerate}
\end{theorem}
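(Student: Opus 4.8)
The plan is to prove the two parts by rather different mechanisms, with the heart of the work concentrated in the volume estimate of part (1) via the frequency decomposition, and part (2) following from a compactness/$\epsilon$-regularity argument building on part (1).

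For part (1), I would first establish the key monotonicity input: by the almost-monotonicity of the generalized frequency $\bar F(r)$ (Theorem \ref{th_Nellmon}), together with the hypothesis $\bar N^u(0,1)\le\Lambda$, one gets a uniform frequency bound $\bar N^u(x,s)\le\Lambda'$ for all $x\in B_{1/2}(0)$ and all $s$ up to a definite scale, after accounting for the $e^{Cr}$ factor. Next comes the \emph{quantitative differentiation} step: for each $x$, consider the scales $s=r_0\theta^j$ and count how many of them are ``bad'' in the sense that $u$ fails to be $(0,\delta,s,x)$-symmetric; since the frequency drops by a definite amount $\epsilon(\delta)$ across each bad scale (a pigeonhole on the finite total frequency drop), the number of bad scales is bounded by $\Lambda'/\epsilon(\delta)\eqqcolon Q$. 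The \emph{frequency decomposition} then partitions $B_{1/2}(0)$ (up to scale $r$) according to the set of bad scales: there are at most $\binom{N}{Q}\le C N^Q$ pieces where $N\sim\log(1/r)$, so the number of pieces is $\le C(\log(1/r))^Q$, which is subpolynomial in $r$. On each piece, at every non-bad scale $u$ is $\delta$-close to a $0$-symmetric (homogeneous) polynomial at \emph{every} point of the piece, and here I would invoke \emph{cone-splitting}: if $u$ is nearly $0$-symmetric at two points that are not too close relative to the scale, then it is nearly $1$-symmetric at the scale, the quantitative version of the footnote computation $\sum a_i\partial_i f=0$. Iterating cone-splitting across $k+1$ nearly-independent points forces $(k+1)$-symmetry; so on each piece, at each non-bad scale, either $\cS^k_{\eta,r}$ is contained in a $Cs$-neighborhood of a $k$-dimensional affine subspace, or we get a good covering. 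A standard covering/induction argument (Vitali at each scale, bounding the number of $s$-balls needed to cover the relevant part of $\cS^k_{\eta,r}$ by $C s^{-k}$) then yields $\mathrm{Vol}(B_r(\cS^k_{\eta,r}\cap\text{piece}))\le C r^{n-k}(\log(1/r))^{Q'}$, and summing over the subpolynomially-many pieces and absorbing the $\log$ factors into $r^{-\eta}$ gives the claimed bound $C(n,\lambda,\Lambda,\eta)r^{n-k-\eta}$, with $\delta=\delta(\eta)$ chosen small.

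For part (2), I would argue by contradiction and compactness. Suppose no such $\bar\eta$ exists: then there are $u_i$ satisfying \eqref{eq_Lu}, \eqref{e:coefficient_estimates} with $\bar N^{u_i}(0,1)\le\Lambda$, points $x_i$, scales $r_i$, and $\eta_i\to 0$ with $x_i\notin\cS^{n-2}_{\eta_i,r_i}(u_i)$, yet $\|T_{x_i,r_i}u_i-L\|_{C^{1,\alpha}(B_{1/2})}\ge\epsilon$ for every unit linear $L$. The condition $x_i\notin\cS^{n-2}_{\eta_i,r_i}$ means $u_i$ is $(n-1,\eta_i,s_i,x_i)$-symmetric for some $s_i\ge r_i$; after rescaling $T_{x_i,r_i}u_i$ and using $C^{1,\alpha}$ elliptic estimates (the normalization on $\partial B_1$ plus the frequency bound gives uniform $C^{1,\alpha}$ bounds on $B_{3/4}$ via the doubling/Corollary 2.2.7-type estimates), pass to a subsequence converging in $C^{1,\alpha/2}(B_{1/2})$ to a limit $u_\infty$ which, by the $(n-1,\eta_i,\cdot)$-symmetry with $\eta_i\to0$, must be $L^2$-close at scale one to an $(n-1)$-symmetric unit-normalized polynomial, i.e. $u_\infty$ agrees on $B_1$ with a homogeneous polynomial depending on a single variable; the only homogeneous harmonic polynomials of one variable are linear, so $u_\infty=L$ for a unit linear $L$, contradicting the $C^{1,\alpha}$ lower bound once one checks the norm passes to the limit (here one needs a bootstrap: $C^{1,\alpha/2}$ convergence of $u_i\to u_\infty$ plus the elliptic equation and Schauder theory upgrades to $C^{1,\alpha}$ convergence on $B_{1/2}$). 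The coefficients $a_i^{ij}$ also converge (along a subsequence, uniformly, being uniformly Lipschitz) so $u_\infty$ solves a limiting constant-or-Lipschitz-coefficient equation, which is all that is needed to run the Schauder bootstrap.

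The main obstacle is the interlocking of the three quantitative ingredients in part (1) — making cone-splitting quantitatively effective in the $L^2$ (or $C^{1,\alpha}$) topology for \emph{approximate} homogeneous polynomials rather than exactly $0$-homogeneous functions, and then feeding this cleanly into the covering recursion so that the number of pieces from the frequency decomposition ($\sim(\log 1/r)^Q$) does not destroy the $r^{n-k-\eta}$ bound. In particular, one must be careful that the $\delta$ in ``$(0,\delta,s,x)$-symmetric'' controlling the bad-scale count, the $\delta'$ needed for cone-splitting to upgrade symmetry, and the $\eta$ in the statement are all tuned consistently; this tuning, together with verifying that the generalized frequency $\bar F$ genuinely controls all the relevant rescalings $T_{x,r}u$ (so that the frequency-drop pigeonhole is valid at every center $x\in B_{1/2}$ and scale), is where the real work lies.
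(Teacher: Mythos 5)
Your proposal follows essentially the same route as the paper: part (1) is exactly the paper's frequency decomposition (labelling points by their tuple of good/bad scales, bounding the number of nonempty labels by quantitative differentiation, and using cone-splitting to get the $c\,s^{-k}$ covering at good scales), and part (2) is the paper's contradiction--compactness $\epsilon$-regularity argument (Proposition \ref{prop_qcrit}/\ref{prop_qcritell}). The only bookkeeping point worth noting is that the $r^{-\eta}$ loss comes not only from the $(\log 1/r)^{Q}$ count of pieces but also from the accumulated per-scale covering constant $c_0^{\,j}$, which is why the paper ties the scale ratio to $\eta$ via $\gamma=c_0^{-2/\eta}$; your ``tuning'' caveat covers this.
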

\begin{remark}
Note that we have only assumed Lipschitz control on the coefficients $a^{ij}$ and $L^\infty$
 control over the coefficients $b^i$.
\end{remark}

 \begin{remark}
 The theorem continues to hold for solutions of \eqref{eq_Lu2} so long as we only  estimate the volume 
$\Vol\qua{B_r\ton{\cS^k_{\eta,r}\cap u^{-1}(0)}\cap B_{1/2}(0)}$.
 \end{remark}
\begin{remark}
The second item in the theorem implies the following important statement:  there exists
 $\eta(n,\lambda,\Lambda)$ such that $B_r(\Cr(u))\subseteq \cS^{n-2}_{\eta,2r}$. 
 This immediately implies the estimate on  tubular neighborhoods of the critical set, 
which is recorded in Theorem \ref{t:crit_lip} below.
\end{remark}
\begin{remark}
On a Riemannian manifold the constant $C$ should also depend on the 
sectional curvature of $M$ and the volume of $B_1$.  In this case one can
 use local coordinates to immediately deduce the theorem for manifolds from the Euclidean version.  
The estimates \eqref{e:coefficient_estimates} are then with respect to the Riemannian geometry on $M$, 
where $a^{ij}$ and $b^i$ are now tensors on $M$ and $\partial$ is the covariant derivative on $M$. 
\end{remark}

\subsection{The Main Estimates on the Critical Set}\label{ss:mainresults}

Our primary applications of Theorem \ref{t:quantstrat} are to the critical sets of solutions of \eqref{eq_Lu},
 or better to the effective critical sets. Indeed, we will not only give estimates on the set of points
 with vanishing gradient, but also on the set of points where the gradient is small in an appropriate sense.

Given a linear function $L(x)=\ps{\vec L\,}{\, x}$, we say that $L$ is normalized if
\begin{gather}
 \fint_{\partial B_1(0)} \abs{L(x)}^2 =1\, \quad \Longleftrightarrow \, \quad \abs{\vec L} = \beta(n)\, .
\end{gather}

\begin{definition}
 Given $u\in C^1$ and $x$ in its domain, we define
 \begin{gather}
  r_x = \sup\cur{s\geq 0 \ \ \text{s.t. there exists a normalized }L \ \ \text{s.t.} \ \ \norm{T_{x,s} u - L}_{C^1(B_{1/2}(0))}\leq \beta(n)/2} \, .
 \end{gather}
\end{definition}
Given the definition, it is immediate to see that $r_x=0$ if and only if $x$
 is a critical point for $u$. Moreover, we have the estimate
\begin{gather}
 \inf_{y\in B_{1/2}(0)} \cur{\abs{\nabla T_{x,r_x}u(y)}}\geq \beta(n)/2 >0\, .
\end{gather}
We can rephrase the previous estimate in the following form
\begin{gather}
 \inf_{y\in B_{r_x/2}(x)} \cur{\abs{\nabla u(y)}}
\geq \frac{\beta(n)}{2} \frac{\ton{\fint_{\partial B_{r_x}(0)} \qua{u(y)-u(x)}^2 dy}^{1/2} }{r_x}  >0\, .
\end{gather}

Let us give an improved definition of the critical set below. 
 It differs from (\ref{e:effective_critical}) in that for a point $x\not\in \Cr_r(u)$ 
not only is the gradient a definite size, but in fact $u$ looks almost linear after normalization:

\begin{definition}
 Given $r\geq 0$, we define the effective critical set at scale $r$ by
 \begin{gather}
  \Cr_r(u) = \cur{x \ \ \text{s.t.} \ \ r_x \leq r}\, .
 \end{gather}
\end{definition}
It is easy to see that for all $0\leq s \leq r$ we have
\begin{gather}
 \Cr(u) \subseteq \Cr_s(u) \subseteq \Cr_r(u)\, .
\end{gather}

\paragraph{Hausdorff measure and Minkowski content} Before stating the
 results let us quickly recall the notion of
 Hausdorff measure and Minkowski content.  In short, the Hausdorff dimension
 of a set can be small although the set is dense; if the set is
not closed, it can still be arbitrarily dense.  On the other hand, Minkowski type estimates bound not only the set in question, 
but the tubular neighborhood of that set, providing a much more analytically
 effective notion of {\it size}.  Precisely, 
given a set $S\subseteq \dR^n$ its $k$-dimensional Hausdorff measure is defined by
\begin{align}
H^k(S)\equiv \lim_{r\to 0}\sum_{S\subseteq\cup B_{r_i}(x_i):r_i\leq r}w_k r_i^{k}\, .
\end{align}
Hence, the Hausdorff measure is obtained from the most efficient coverings of 
$S$ by balls of arbitrarily small size.  
On the other hand, the Minkowski $k$-content is defined by
\begin{align}
M^k(S)\equiv \lim_{r\to 0}\sum_{S\subseteq\cup B_{r}(x_i)}w_k r^{k}\, .
\end{align}
Hence, the Minkowski $r$-content of $S$ is obtained by covering $S$ with balls of a {\it the same} size, 
$r$, which is then taken to be arbitrarily small.  Equivalently in our situation, it is obtained by 
controlling the volume of tubular neighborhoods of $S$.  The Hausdorff and Minkowski
 dimensions are then defined as the smallest numbers $k$ such that $H^{k'}(S)=0$ or $M^{k'}(S)=0$,
 respectively, for all $k'>k$.  As a simple example note that the Hausdorff dimension of the rationals in $B_1(0)$ is $0$,
 while the Minkowski dimension is $n$.

\paragraph{Main theorem} Let us begin with the following result which is an immediate consequence of 
 Theorem \ref{t:quantstrat} and the remarks following that theorem:

\begin{theorem}\label{t:crit_lip}
Let $u:B_1(0)\to\dR$ satisfy \eqref{eq_Lu} and \eqref{e:coefficient_estimates} weakly 
with $\bar N^u(0,1)\leq \Lambda$.  Then for every $\eta>0$ we have 
\begin{align}
{\rm Vol}(B_r(\Cr_r(u))\cap B_{1/2}(0))\leq C(n,\lambda,\Lambda,\eta)r^{2-\eta}\, .
\end{align}
\end{theorem}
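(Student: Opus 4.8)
The plan is to derive Theorem \ref{t:crit_lip} as a direct corollary of Theorem \ref{t:quantstrat}, using only the remarks that follow that theorem. The key observation is that the effective critical set $\Cr_r(u)$ is, up to a fixed choice of $\eta$, contained in a quantitative stratum $\cS^{n-2}_{\eta,cr}$, to which the volume bound of Theorem \ref{t:quantstrat}(1) applies with $k=n-2$.

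\medskip

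\noindent\emph{Step 1: Relate $\Cr_r(u)$ to a quantitative stratum.} First I would fix $\alpha=0$ (or any value in $[0,1)$) and a small $\epsilon$, say $\epsilon=\beta(n)/100$, and invoke Theorem \ref{t:quantstrat}(2) to obtain the threshold $\bar\eta=\bar\eta(n,\epsilon,\lambda,\Lambda)$. Set $\eta_0=\tfrac12\bar\eta$ (any fixed choice below $\bar\eta$ works). The content of item (2) is: if $x\notin\cS^{n-2}_{\eta_0,s}$ then $T_{x,s}u$ is $C^{1}$-close to a normalized linear function on $B_{1/2}(0)$, which by the definition of $r_x$ forces $r_x\geq s$, i.e.\ $x\notin\Cr_s(u)$. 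Contrapositively, $\Cr_s(u)\subseteq\cS^{n-2}_{\eta_0,s}$ for every $s$. One has to be mildly careful matching the normalization: the $L$ produced by Theorem \ref{t:quantstrat}(2) satisfies $\fint_{\partial B_1(0)}|L|^2=1$, which is exactly the normalization in the definition of $r_x$, and if $\epsilon<\beta(n)/2$ then $\norm{T_{x,s}u-L}_{C^1(B_{1/2}(0))}<\epsilon\le\beta(n)/2$, so $r_x\ge s$ holds directly. This step is essentially just unwinding definitions, so I expect no real obstacle here.

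\medskip

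\noindent\emph{Step 2: Pass to the tube.} Now I would bound $\Vol(B_r(\Cr_r(u))\cap B_{1/2}(0))$. By Step 1, $\Cr_r(u)\subseteq\cS^{n-2}_{\eta_0,r}$, hence $B_r(\Cr_r(u))\subseteq B_r(\cS^{n-2}_{\eta_0,r})$ and therefore
\begin{align}
\Vol(B_r(\Cr_r(u))\cap B_{1/2}(0))\leq \Vol\big(B_r(\cS^{n-2}_{\eta_0,r})\cap B_{1/2}(0)\big)\leq C(n,\lambda,\Lambda,\eta_0)\,r^{n-(n-2)-\eta_0}=C\,r^{2-\eta_0}
\end{align}
by Theorem \ref{t:quantstrat}(1) applied with $k=n-2$. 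Since $\eta_0$ was chosen depending only on $n,\lambda,\Lambda$ (through $\bar\eta$ with our fixed $\epsilon,\alpha$), the constant $C$ depends only on $n,\lambda,\Lambda$. This already gives the theorem for this particular exponent $2-\eta_0$, but the statement asks for $r^{2-\eta}$ for \emph{every} $\eta>0$.

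\medskip

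\noindent\emph{Step 3: Upgrade to arbitrary $\eta$.} For a given $\eta>0$, if $\eta\geq\eta_0$ then the bound from Step 2 suffices (for $r\le 1$, $r^{2-\eta_0}\le r^{2-\eta}$). If $\eta<\eta_0$, I would simply re-run Step 1 with the ambient threshold parameter taken to be $\min(\eta,\bar\eta)=\eta$ itself: since the quantitative strata are nested, $\Cr_r(u)\subseteq\cS^{n-2}_{\eta_0,r}\subseteq\cS^{n-2}_{\eta,r}$ is false in that direction, so instead I observe directly that $\Cr_r(u)\subseteq\cS^{n-2}_{\eta_0,r}$ and that $\cS^{n-2}_{\eta_0,r}\subseteq\cS^{n-2}_{\eta,r}$ precisely when $\eta\le\eta_0$ — which is the case here — giving $\Vol(B_r(\Cr_r(u))\cap B_{1/2}(0))\leq \Vol(B_r(\cS^{n-2}_{\eta,r})\cap B_{1/2}(0))\leq C(n,\lambda,\Lambda,\eta)r^{2-\eta}$ by Theorem \ref{t:quantstrat}(1). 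The only genuinely delicate point in the whole argument — and the one I would state carefully — is the monotonicity/containment bookkeeping in Step 1: one must confirm that the threshold $\bar\eta$ from item (2) does not itself depend on $\eta$ in a way that creates circularity, and that the nesting $\cS^k_{\eta',r}\subseteq\cS^k_{\eta,r}$ for $\eta'\ge\eta$ (noted right after the definition of the quantitative stratification) is being applied in the correct direction. Everything else is a formal consequence of Theorem \ref{t:quantstrat}.
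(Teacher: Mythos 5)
Your argument is correct and is essentially the paper's own proof: the paper likewise deduces Theorem \ref{t:crit_lip} from item (2) of Theorem \ref{t:quantstrat} (in the form of Propositions \ref{prop_qcrit} and \ref{prop_qcritell}), which gives the inclusion of the effective critical set in $\cS^{n-2}_{\eta,r}$ for all $\eta$ below a fixed threshold $\bar\eta(n,\lambda,\Lambda)$, and then applies the volume bound of item (1) with $k=n-2$, disposing of large $\eta$ trivially exactly as you do. The only points I would tidy are the harmless boundary case $r_x=r$ in passing from ``$r_x\geq s$'' to ``$x\notin\Cr_s(u)$'' (the paper sidesteps this by using a tube or the scale $2r$), and the momentarily garbled sentence in Step 3, whose final containment $\cS^{n-2}_{\eta_0,r}\subseteq\cS^{n-2}_{\eta,r}$ for $\eta\leq\eta_0$ is nonetheless the correct one.
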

\begin{remark}
This immediately gives us the weaker estimate that Minkowski dimension of $\Cr(u)$ satisfies $\dim_{Min}\Cr(u)\leq n-2$.
\end{remark}
 Thus we really have estimates on an effective version of the critical set.
\begin{remark}

The theorem still holds for solutions $u$ of \eqref{eq_Lu2}, provided we restrict ourself to the zero level set of $u$.  
That is, in this case we have  $\Vol[B_r(\Cr(u)\cap u^{-1}(0))\cap B_{1/2}(0)]\leq C(n,\lambda,\Lambda,\eta)r^{2-\eta}$.
\end{remark}
\begin{remark}
On a manifold the constant $C$ should also depend on the sectional curvature of $M$ and the volume of $B_1$. 
\end{remark}

\paragraph{$(n-2)$-Hausdorff estimates} As an easy application of Theorem \ref{t:quantstrat} and an important $\epsilon$-regularity 
theorem \cite[Lemma 3.2]{hanhardtlin}, we can show the critical and singular sets have finite $n-2$ measure if we assume the coefficients
 are sufficiently smooth.  Note that this result follows also from the results in \cite{HLrank}. 

\begin{theorem}\label{th_Ha}
Let $u:B_1(0)\to\dR$ satisfy \eqref{eq_Lu} and \eqref{e:coefficient_estimates} weakly with $\bar N^u(0,1)\leq \Lambda$, and such that 
$$||\delta-a||_{C^M},\, ||b||_{C^M}<\lambda\, ,$$ where $M=M(n,\lambda,\Lambda)$.  Then we have that
\begin{align}
H^{n-2}(\Cr(u)\cap B_{1/2}(0))<C(n,\lambda,\Lambda)\, .
\end{align}
\end{theorem}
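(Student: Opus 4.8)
\textbf{Proof proposal for Theorem \ref{th_Ha}.}

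The plan is to combine the effective Minkowski/volume estimate of Theorem \ref{t:crit_lip} (equivalently the quantitative stratification of Theorem \ref{t:quantstrat}) with the $\epsilon$-regularity theorem of Han--Hardt--Lin to upgrade the $(n-2+\epsilon)$-type control to a clean uniform $(n-2)$-Hausdorff bound. The mechanism is a now-standard covering argument, of the kind used to deduce Hausdorff measure bounds from Minkowski content bounds once one has an $\epsilon$-regularity statement that says ``near a point of sufficiently high quantitative symmetry, the critical set is contained in a Lipschitz graph of the right dimension''. Throughout I work on $B_{1/2}(0)$ with a fixed frequency bound $\bar N^u(0,1)\leq \Lambda$.

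First I would record the two ingredients precisely. From Theorem \ref{t:quantstrat}(2), together with the $C^M$ smoothness of the coefficients, one shows there is a fixed $\eta_0=\eta_0(n,\lambda,\Lambda)$ such that if $x\notin \cS^{n-2}_{\eta_0,r}$ then $u$ restricted to $B_r(x)$, after rescaling, is $C^{1,\alpha}$-close to a nonzero linear function; hence $\Cr(u)\cap B_r(x)=\emptyset$. On the other hand, the $\epsilon$-regularity theorem \cite[Lemma 3.2]{hanhardtlin} provides $\epsilon_0, r_0$ (depending on $n,\lambda,\Lambda$, and here is where $M=M(n,\lambda,\Lambda)$ enters, since the blow-up limits must be genuine harmonic polynomials) such that if $u$ is $(n-1,\epsilon_0,r,x)$-symmetric at \emph{some} scale $r\leq r_0$ — i.e.\ $T_{x,r}u$ is $\epsilon_0$-close to a polynomial with an $(n-1)$-dimensional symmetry — then $\Cr(u)\cap B_{r/2}(x)$ is contained in a single $C^1$-graph over an $(n-2)$-plane, with uniformly controlled Lipschitz constant. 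Consequently $H^{n-2}\big(\Cr(u)\cap B_{r/2}(x)\big)\leq C(n) r^{n-2}$ on such balls.

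The covering argument then runs as follows. Apply Theorem \ref{t:crit_lip} with $\eta = \eta_0$ fixed: for each scale $s$, $\Vol\big(B_s(\Cr(u))\cap B_{1/2}(0)\big)\leq C(n,\lambda,\Lambda,\eta_0)\, s^{2-\eta_0}$, which by a standard volume-counting estimate gives a covering of $\Cr(u)\cap B_{1/2}(0)$ by at most $C\, s^{-(n-2+\eta_0)}$ balls $\{B_s(x_i)\}$ with $x_i\in\Cr(u)$. Fix $s$ small, depending on $\epsilon_0$ and $r_0$. Now sort the balls of this covering into two classes. A ball $B_s(x_i)$ is ``good'' if $u$ is $(n-1,\epsilon_0,\rho,x_i)$-symmetric for some $\rho\in[s,r_0]$; on good balls the $\epsilon$-regularity conclusion gives $H^{n-2}(\Cr(u)\cap B_s(x_i))\leq C(n)s^{n-2}$, and since there are at most $Cs^{-(n-2+\eta_0)}$ of them the total contribution is $\leq C\, s^{-\eta_0}\cdot$(nothing good enough yet) — so this naive split does not close. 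The correct organization is the inductive/iterative covering of Cheeger--Naber type: one does not cover at a single scale but runs the stratification down through dyadic scales, at each stage replacing a ball on which $u$ is \emph{not} quantitatively $(n-1)$-symmetric at any intermediate scale by $\sim 2^{n-2+\eta_0}$ sub-balls of half the radius (this is exactly what the volume bound of Theorem \ref{t:crit_lip} permits), while a ball on which $u$ \emph{does} become $(n-1,\epsilon_0)$-symmetric at some scale is ``stopped'' and handled once and for all by $\epsilon$-regularity, contributing $\leq C(n)(\text{radius})^{n-2}$ to the $H^{n-2}$-sum and never subdivided again. Because a point $x\in\Cr(u)$ with $x\notin \cS^{n-2}_{\eta_0,r}$ for all $r$ lies, by definition of $\cS^k$, in $\bigcup_\eta\bigcap_r \cS^{n-2}_{\eta,r}$ only through finitely many ``bad'' scales before it must get stopped — this is precisely a quantitative-differentiation / pigeonhole statement controlling the number of scales at which $u$ is \emph{not} $(n-1,\epsilon_0)$-symmetric — the bad-ball tree has bounded depth $N_0=N_0(n,\lambda,\Lambda,\epsilon_0)$, so summing the stopped-ball contributions over all $\leq (C 2^{n-2+\eta_0})^{N_0}\cdot(\text{number of stopped balls per level})$ terms gives $H^{n-2}(\Cr(u)\cap B_{1/2}(0))\leq C(n,\lambda,\Lambda)$, as claimed. (A clean alternative is to quote the packing/Minkowski-to-Hausdorff machinery as a black box: Theorem \ref{t:quantstrat} plus $\epsilon$-regularity is exactly the input format of that machinery.)

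The main obstacle, and the step requiring the most care, is calibrating the two smallness parameters so the dichotomy is genuinely exhaustive: one must choose $\epsilon_0$ (from $\epsilon$-regularity) first, then choose $\eta_0$ small enough that membership in $\cS^{n-2}_{\eta_0,r}$ rules out $(n-1,\epsilon_0,r,x)$-symmetry — i.e.\ use Theorem \ref{t:quantstrat}(2) with $\alpha,\epsilon$ tuned so that ``$T_{x,r}u$ is $C^{1,\alpha}$-close to a \emph{linear} $L$'' implies in particular ``$(n-1,\epsilon_0)$-symmetric'' — and then the bound on the number of bad scales $N_0$ (hence the final constant) depends on this $\epsilon_0$, which is permitted since all of these ultimately depend only on $n,\lambda,\Lambda$. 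The secondary technical point is verifying that the $\epsilon$-regularity theorem of \cite{hanhardtlin}, which is stated for level sets / uses smoothness of the coefficients, applies to the full critical set here — this is where the hypothesis $\|\delta-a\|_{C^M},\|b\|_{C^M}<\lambda$ is essential, and where one invokes that the tangent polynomial $T_{x,0}u$ is, after a linear change of variables, a genuine homogeneous harmonic polynomial, so that the classical structure of the critical set of harmonic polynomials (codimension two, with the top stratum a manifold) can be transplanted via the $\epsilon$-regularity estimate to $u$ itself.
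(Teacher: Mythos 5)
Your proposal has the right ingredients in mind (the quantitative stratification of Theorem \ref{t:quantstrat} plus the $\epsilon$-regularity of \cite[Lemma 3.2]{hanhardtlin}), but the way you combine them contains a genuine gap, rooted in using the wrong level of symmetry. The $\epsilon$-regularity lemma does not say anything about points where $u$ is $(n-1,\epsilon_0)$-symmetric; it applies when $T_{x,s}u$ is close to a homogeneous harmonic polynomial with \emph{exactly} $n-2$ symmetries, i.e.\ a two-variable polynomial $r^d\cos(d\theta)$ of degree $d\geq 2$, and its conclusion is a measure bound $H^{n-2}(\nabla u^{-1}(0)\cap B_r)\leq c(n)(d-1)^2r^{n-2}$, not containment in a single $C^1$ graph. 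An $(n-1)$-symmetric homogeneous harmonic polynomial is necessarily linear (Proposition \ref{prop_poly}), so being $(n-1,\epsilon_0)$-symmetric near $x$ forces, via Proposition \ref{prop_qcrit}, that $u$ has \emph{no} critical points in $B_{r/2}(x)$ at all. Consequently your ``stopping condition'' can never trigger at a critical point, and your pigeonhole claim — that there are only boundedly many scales at which $u$ fails to be $(n-1,\epsilon_0)$-symmetric — is false: for $u=x_1^2-x_2^2$ the origin fails $(n-1)$-almost-symmetry at every scale. Quantitative differentiation (Remark \ref{qd}) bounds only the number of scales lacking \emph{zero}-symmetry (almost homogeneity); it gives no bound of the kind your bad-ball tree needs, so the tree does not have bounded depth and the sum does not close.

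The paper closes the argument with a different dichotomy, organized around the $(n-3)$ stratum rather than the $(n-1)$ level. One fixes $\eta$ small (below both the threshold of Proposition \ref{prop_qcrit} and the $\epsilon$ of Corollary \ref{cor_ereg}), picks $\gamma\leq 1/3$, and writes $\Cr(u)\cap B_{1/2}$ as the union over $j$ of $\Cr^{(j)}(u)=\Cr(u)\cap(\cS^{n-2}_{\eta,\gamma^j}\setminus\cS^{n-3}_{\eta,\gamma^j})\cap\cS^{n-3}_{\eta,\gamma^{j-1}}$, plus the residual set $\Cr(u)\cap\bigcap_j\cS^{n-3}_{\eta,\gamma^j}$. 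The residual set has $H^{n-2}$-measure zero because of the $(n-3+\eta)$-Minkowski bound on $\cS^{n-3}_{\eta,r}$. For $\Cr^{(j)}$, each point fails to lie in $\cS^{n-3}_{\eta,\gamma^j}$, so at some scale $s\in[\gamma^j,\gamma^{j-1}]$ the blow-up $T_{x,s}u$ is $\eta$-close to a normalized two-variable homogeneous harmonic polynomial, and \cite[Lemma 3.2]{hanhardtlin} (in the uniform form of Corollary \ref{cor_ereg}, made uniform via the frequency bound of Lemma \ref{lemma_strong2bar}) gives $H^{n-2}(\Cr(u)\cap B_{\gamma^j\bar r}(x))\leq C\gamma^{(n-2)j}$. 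The counting that makes the series converge is precisely the volume estimate on $\cS^{n-3}_{\eta,\gamma^{j-1}}$: a Vitali-type covering of $\Cr^{(j)}$ by balls of radius $\gamma^j\bar r$ needs at most $m(j)\leq C\gamma^{(3-\eta-n)j}$ balls, so $H^{n-2}(\Cr^{(j)})\leq C\gamma^{(1-\eta)j}$ and the sum over $j$ is a geometric series. Your own covering used the $\cS^{n-2}$ volume bound (giving $s^{-(n-2+\eta_0)}$ balls), which, as you noticed, does not close; the missing idea is that the per-scale ball count must come from the one-dimension-lower stratum $\cS^{n-3}$, while the $\epsilon$-regularity input must be closeness to an $(n-2)$-symmetric (two-variable) polynomial, which is exactly what failing $\cS^{n-3}_{\eta}$-membership provides at critical points.
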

\begin{remark}
On a manifold the constant $C$ should also depend on the sectional curvature of $M$ and the volume of $B_1$. 
\end{remark}
\begin{remark}
 As mentioned in the introduction, this theorem can also be proved by a simple adaptation of the proof of \cite[theorem A]{HLrank}.
\end{remark}
\begin{remark}
The theorem still holds for solutions $u$ of \eqref{eq_Lu2}, provided that
 we restrict ourselves to the zero level set of $u$.  In this case the result was originally proved 
in \cite[Theorem 3.1]{hanhardtlin} (see also \cite[Theorem 7.2.1]{hanlin}).
\end{remark}

For the sake of clarity,
in giving the proofs, we will at first restrict our study to harmonic functions on $\R^n$.
 Technical details aside, all the ideas needed for the proof of the general case are already
 present in this case.  We will then turn our attention to the general elliptic case, pointing 
out the differences  between the two situations.

\paragraph{Acknowledgement} We are indebted to an anonymous referee for useful comments and for calling to our attention to the reference \cite{HLrank}. 

\section{Harmonic functions}\label{sec_harmonic}

Throughout this section, $u$ will denote a harmonic function on the unit ball, i.e., a function 
$u:B_1(0)\subseteq \R^n \to \R$ which solves
\begin{gather}
 \Delta u =0\, .
\end{gather}

As in \cite{ChNa1,ChNa2} a key tool in the development of a quantitative stratification is the existence of 
an appropriate monotone quantity.  In this context this monotone quantity is the Almgren frequency
 function and its various generalizations, see Section \ref{sec_freqell}.  We begin by introducing the standard 
frequency function. 

\subsection{Almgren's Frequency and Normalized Frequency}
\begin{definition}
 If $u$ is a nonzero harmonic function, for $x\in B_1(0)$ and $r\in (0,1-\abs x)$ we define the Almgren's frequency 
function by:
\begin{gather}
 N^u(x,r)=\frac{r\int_{B_r(x)} \abs{\nabla u}^2 dV}{\int_{\partial B_r(x)} u^2 dS}\, .
 \label{eq_alm}
\end{gather}
If $u$ is nonconstant, we define the normalized version of Almgren's frequency function by:
\begin{gather}
\bar N^u(x,r)=N^{u-u(x)}(x,r)=\frac{r\int_{B_r(x)} \abs{\nabla u}^2 dV}{\int_{\partial B_r(x)} (u-u(x))^2 dS} \label{eq_alm2}\, .
\end{gather}
\end{definition}
\begin{remark}
 As we will see, the frequency function can be used to control the vanishing order of $u$ at each point. 
However, since we are interested in the study of the critical set, not just the singular one, we will need
 information on the vanishing order at $x$ of $u-u(x)$. In this context, the definition of normalized 
frequency in \eqref{eq_alm2} is the natural extension of the standard one.
\end{remark}

An essential property of $N(x,r)$ is that it is invariant under rescaling and blow-ups.  
The normalized frequency $\bar N$, has in addition, the property of remaining unchanged if we add a constant to $u$.  
More generally,
we have the following easily verified lemma.

\begin{lemma}\label{l_blowup}
 Let $\alpha,\beta,\gamma$ be real constants, $\alpha,\beta \neq 0$. If 
$w(x)=\alpha u(\beta x ) + \gamma$, then:
\begin{gather}
 \bar N^u(0,r)= \bar N^w (0,\beta^{-1} r)
\end{gather}
\end{lemma}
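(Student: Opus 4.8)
The plan is to prove the identity by a direct change of variables, tracking separately how each of the three operations building $w$ from $u$ affects the numerator and denominator of the normalized frequency. First I would record the elementary facts. Since $w(0) = \alpha u(0) + \gamma$, we get $w(x) - w(0) = \alpha\ton{u(\beta x) - u(0)}$, so the additive constant $\gamma$ cancels in the difference. Differentiating, $\nabla w(x) = \alpha\beta\,(\nabla u)(\beta x)$, hence $\abs{\nabla w(x)}^2 = \alpha^2\beta^2\abs{(\nabla u)(\beta x)}^2$. I would also note that $w$ is harmonic and nonconstant whenever $u$ is, so that $\bar N^w$ is well-defined on the relevant range of radii by unique continuation.

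Next, writing $s = \beta^{-1}r$ (assume $\beta > 0$ for notational cleanliness; the case $\beta < 0$ is identical after replacing $\beta$ by $\abs\beta$ in the Jacobians), I would perform the substitution $y = \beta x$ in the two integrals defining $\bar N^w(0,s)$, using that $B_s(0)$ maps onto $B_r(0)$. The volume element contributes a factor $\beta^{-n}$, giving
\[
\int_{B_s(0)}\abs{\nabla w}^2\,dV = \alpha^2\beta^2\beta^{-n}\int_{B_r(0)}\abs{\nabla u}^2\,dV ,
\]
while the surface element on $\partial B_s(0)$ contributes $\beta^{-(n-1)}$, giving
\[
\int_{\partial B_s(0)}\ton{w - w(0)}^2\,dS = \alpha^2\beta^{-(n-1)}\int_{\partial B_r(0)}\ton{u - u(0)}^2\,dS .
\]

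Finally I would assemble the ratio:
\[
\bar N^w(0,s) = \frac{s\,\alpha^2\beta^2\beta^{-n}\int_{B_r(0)}\abs{\nabla u}^2\,dV}{\alpha^2\beta^{-(n-1)}\int_{\partial B_r(0)}\ton{u-u(0)}^2\,dS} = s\beta\cdot\frac{\int_{B_r(0)}\abs{\nabla u}^2\,dV}{\int_{\partial B_r(0)}\ton{u-u(0)}^2\,dS} ,
\]
since the powers of $\alpha$ cancel and the net power of $\beta$ is $\beta^2\cdot\beta^{-n}\cdot\beta^{n-1} = \beta$. As $s\beta = r$, the right-hand side equals $\bar N^u(0,r)$, which is the claim. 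There is essentially no genuine obstacle here; the only points meriting a word of care are the direction of the rescaling — that one must take $s = \beta^{-1}r$, which is forced by matching $B_s$ in the $w$-variable to $B_r$ in the $u$-variable — and the bookkeeping of the sign of $\beta$ in the Jacobian factors.
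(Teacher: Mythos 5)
Your computation is correct, and since the paper states this lemma as "easily verified" and omits the proof entirely, your direct change-of-variables argument is precisely the intended verification: the powers of $\alpha$ cancel between numerator and denominator, and the net factor of $\beta$ from the gradient, volume, and surface Jacobians is exactly absorbed by replacing $s=\beta^{-1}r$ with $r$. Your remark about the sign of $\beta$ is also apt, since $\beta^{-1}r$ only makes sense as a radius when read as $\abs{\beta}^{-1}r$.
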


The main property of the frequency function is its monotonicity with respect to $r$.
\begin{theorem}
  Let $u$ be a nonconstant harmonic function, and $x\in B_1(0)$.  Then $\bar N(x,r)$
 is monotone nondecreasing with respect to $r$. Moreover, if for some $0\leq r_1<r_2$, 
$\bar N(x,r_1)=\bar N(x,r_2)$, then $u-u(x)$ is a homogeneous harmonic polynomial of degree $d=N(x,r)$ centered at $x$.
\end{theorem}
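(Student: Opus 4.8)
The plan is to establish the monotonicity of the normalized Almgren frequency $\bar N(x,r)$ via the standard differentiation argument, reducing to the case $x=0$ and $u(0)=0$ by translating and using that $\bar N^u(x,r)=N^{u-u(x)}(x,r)$. So I would set $v=u-u(x)$, assume $x=0$, and write $N(r)=\frac{rD(r)}{H(r)}$ where $D(r)=\int_{B_r}|\nabla v|^2$ and $H(r)=\int_{\partial B_r}v^2$. The key computational identities are: first, integration by parts (using $\Delta v=0$) gives $D(r)=\int_{\partial B_r} v\,\partial_\nu v$; second, $H'(r)=\frac{n-1}{r}H(r)+2\int_{\partial B_r}v\,\partial_\nu v=\frac{n-1}{r}H(r)+2D(r)$; third, the Rellich–Pohozaev identity gives $D'(r)=\frac{n-2}{r}D(r)+2\int_{\partial B_r}(\partial_\nu v)^2$.

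From these, a direct computation of $\frac{d}{dr}\log N(r)=\frac{1}{r}+\frac{D'(r)}{D(r)}-\frac{H'(r)}{H(r)}$ yields, after the $\frac{n-1}{r}$ and $\frac{n-2}{r}$ and $\frac1r$ terms cancel,
\begin{gather}
\frac{d}{dr}\log N(r)=\frac{2\int_{\partial B_r}(\partial_\nu v)^2}{D(r)}-\frac{2D(r)}{H(r)}=\frac{2\int_{\partial B_r}(\partial_\nu v)^2}{\int_{\partial B_r}v\,\partial_\nu v}-\frac{2\int_{\partial B_r}v\,\partial_\nu v}{\int_{\partial B_r}v^2}\, .
\end{gather}
The nonnegativity of the right-hand side is exactly the Cauchy–Schwarz inequality $\left(\int_{\partial B_r}v\,\partial_\nu v\right)^2\leq\left(\int_{\partial B_r}v^2\right)\left(\int_{\partial B_r}(\partial_\nu v)^2\right)$ applied on $\partial B_r$. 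This proves $N$ is nondecreasing (one should note $H(r)>0$ for $r>0$ by unique continuation, so everything is well-defined, and $D(r)=\int_{\partial B_r}v\,\partial_\nu v\geq 0$). I would also need to handle $r_1=0$: since $N$ is monotone and bounded below, the limit $N(0^+)$ exists and equals the vanishing order of $v$ at the origin, so equality $\bar N(x,0)=\bar N(x,r_2)$ makes sense as a limiting statement.

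For the rigidity statement, equality $N(r_1)=N(r_2)$ with $r_1<r_2$ forces $\frac{d}{dr}\log N\equiv 0$ on $(r_1,r_2)$, hence equality in Cauchy–Schwarz on each sphere $\partial B_r$: this means $\partial_\nu v=\lambda(r)v$ on $\partial B_r$ for some function $\lambda(r)$. Plugging back, $\lambda(r)=D(r)/H(r)=N(r)/r$, a constant $d$ divided by $r$. Then writing $v$ in polar coordinates $v(r\omega)=\sum_k f_k(r)\phi_k(\omega)$ (spherical harmonic expansion), the relation $\partial_r v=\frac{d}{r}v$ on every sphere forces each $f_k(r)=c_k r^d$, so $v$ is homogeneous of degree $d$; then harmonicity plus homogeneity forces $v$ to be a homogeneous harmonic polynomial of degree $d$ (so in particular $d\in\mathbb{N}$). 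One then extends this to $(r_1,r_2)$ and uses unique continuation / analyticity to conclude $v=u-u(x)$ is globally this polynomial centered at $x$.

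The main obstacle is not any single step — each of the three integral identities is a routine integration by parts — but rather the careful bookkeeping: making sure the boundary terms are handled correctly (especially the Rellich identity, where one differentiates $r^{2-n}D(r)$ or uses the vector field $x\cdot\nabla$), confirming positivity of $H(r)$ and hence well-definedness via unique continuation, and in the rigidity part correctly deducing homogeneity of $v$ from the pointwise Neumann-to-Dirichlet relation on all spheres and then upgrading "homogeneous harmonic function smooth across the origin" to "homogeneous harmonic polynomial." The Cauchy–Schwarz step itself is the conceptual heart and is immediate; the work is in assembling the identities cleanly.
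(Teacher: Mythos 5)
Your proof is correct and is exactly the standard argument the paper relies on: the paper itself only remarks that the statements for $\bar N$ reduce to those for $N$ and cites the well-known computation, which is carried out in greater generality in Section \ref{sec_freqell} using the same identities (the Rellich-type formula for $D'$, the formula for $H'$, and Cauchy--Schwarz on the sphere) that you assemble here. Your treatment of the rigidity case via equality in Cauchy--Schwarz and the spherical harmonic expansion is likewise the standard route and is sound.
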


Here, by definition we say a polynomial $p$ is homogeneous and centered at $x$ if
 $p(y)=\sum_{|\beta|=d} c_\beta (y-x)^{\beta}$, where $\beta$ is a multi-index and $|\beta|\equiv \sum \beta_i$.

\begin{proof}
Since $x$ is fixed, it is evident
 that the assertions for $\bar N$ are equivalent to those for
$N$.  In that case, they are well-known (see Section \ref{sec_freqell} for a more general computation).
\end{proof}
\begin{remark}
 Using monotonicity, we can define $\bar N(x,0)=\lim_{r\to 0} \bar N(x,r)$. This quantity has a very
 concrete interpretation. Indeed, it is easy to see that $\bar N(x,0)$ is the degree of the leading 
polynomial $T_{x}u$. By assumption, $u$ is not constant, and thus we deduce the important lower 
bound $\bar N(x,r)\geq \bar N(x,0)\geq 1$ for all $x,r$.
\end{remark}

\begin{remark}\label{rem_dou}
For positive $r$, let $H(x,r)=\fint_{\partial B_r(x)} u^2 dS$. A well-known corollary to the monotonicity 
of $N$ is the following doubling condition on $H$:
\begin{gather}\label{eq_dou}
 H(x,r_2)\leq \ton{\frac{r_2}{r_1}}^{2N(x,r_2)} H(x,r_1)\, .
\end{gather}
By replacing $u$ with $u-u(x)$ we obtain an analogous property for the similarly defined quantity
$\bar H(x,r)=\fint_{\partial B_r(x)} (u-u(x))^2 dS$.  Note that this doubling property has as an immediate 
corollary the unique continuation property for harmonic functions.
\end{remark}

The main results in this paper give estimates that rely on $\bar N^u(0,1)$.  The next lemma proves 
that an upper bound on this quantity implies uniform upper bounds on $\bar N^u (x,r)$, where $x$ and 
$r$ are chosen in such a way that $B_r(x)\Subset B_1(0)$.

\begin{lemma}\label{lemma_strong2bar}
Let $u$ be a nonconstant harmonic function in $B_1(0)\subseteq \R^n$ with $\bar N(0,1)\leq \Lambda$.
 For each positive $\kappa<1$, there exists a function $C(n,\Lambda,\kappa)$ such that for each 
$x\in B_{\kappa}(0)$ and $r\leq \frac 2 3 (1-\kappa)$,
\begin{gather}
 \bar N(x,r) \leq  C(n,\Lambda,\kappa)\, .
\end{gather}
\end{lemma}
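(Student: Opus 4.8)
The plan is to control $\bar N(x,r)$ for $x \in B_\kappa(0)$ by relating it to the frequency at $0$ via a chain of containments of balls and the monotonicity theorem. First I would observe that $\bar N^u(x,\cdot)$ is the frequency of the translated function $v = u - u(x)$, which is again harmonic, so it is monotone nondecreasing in $r$. Hence it suffices to bound $\bar N(x,r_0)$ for a single radius $r_0 = \tfrac23(1-\kappa)$, since for smaller $r$ the quantity only decreases. Note $B_{r_0}(x) \subseteq B_{\kappa + r_0}(0) = B_{\frac13 + \frac23\kappa}(0) \Subset B_1(0)$, so everything stays safely inside the unit ball with a definite margin.

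The key step is a comparison between $\bar N(x,r_0)$ and $\bar N(0,1)$. I would do this by comparing the numerator $\int_{B_{r_0}(x)}|\nabla u|^2$ and the boundary denominator $\int_{\partial B_{r_0}(x)}(u-u(x))^2$ with the corresponding integrals over $B_1(0)$. For the numerator, $\int_{B_{r_0}(x)}|\nabla u|^2 \le \int_{B_1(0)}|\nabla u|^2$, and by the frequency bound together with the Rellich/Almgren identity $\int_{B_1}|\nabla u|^2 = \bar N(0,1)\int_{\partial B_1}(u-u(0))^2 \le \Lambda\,\bar H(0,1)$ (up to the normalizing factor $r=1$). The delicate part is bounding the numerator from above by the denominator $\int_{\partial B_{r_0}(x)}(u-u(x))^2$ — i.e., producing a lower bound on the latter in terms of $\bar H(0,1)$. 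Here I would use the doubling property from Remark \ref{rem_dou}: applied to $v = u - u(x)$, it gives $\bar H(x,r_2) \le (r_2/r_1)^{2\bar N(x,r_2)}\bar H(x,r_1)$, but what I actually need is a \emph{lower} bound, so I would instead argue as follows. By the monotonicity of $r \mapsto \bar N(x,r)$ and the standard formula $\frac{d}{dr}\log \bar H(x,r) = \frac{2\bar N(x,r)}{r}$, integrating from $r_0$ up to some fixed radius $\rho$ with $B_\rho(x) \supseteq B_{1-\kappa'}(0)$ for suitable $\kappa'$... more cleanly: I would bound $\bar H(x,r_0)$ below by comparing with $\int_{B_1(0)}(u-u(x))^2$, which in turn is comparable to $\int_{B_1(0)}(u-u(0))^2$ plus the oscillation — but $u(x) - u(0)$ is controlled by interior gradient estimates and hence by $\bar H(0,1)$.

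The cleanest route, and the one I would ultimately pursue: combine the interior estimates $\|u - u(0)\|_{L^\infty(B_{(1+\kappa)/2})} \le C(n)\bar H(0,1)^{1/2}$ (mean-value/elliptic estimates, using the frequency bound to get $L^2$-to-$L^\infty$ with constant depending on $\Lambda$) with a lower bound $\bar H(x,r_0) \ge c(n,\Lambda,\kappa)\bar H(0,1)$ obtained as follows: if $\bar H(x,r_0)$ were tiny relative to $\bar H(0,1)$, then by the doubling inequality applied to $v=u-u(x)$ \emph{backwards} (i.e.\ $\bar H(x,\rho) \le (\rho/r_0)^{2\bar N(x,\rho)}\bar H(x,r_0)$ for $\rho > r_0$), the function $u - u(x)$ would be small in $L^2(B_\rho(x))$ for $\rho$ up to nearly $1-\kappa$, hence small on a fixed ball around $0$, contradicting the normalization of $\bar H(0,1)$ once we account for the constant shift $u(x)-u(0)$. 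To close this we need the exponent $\bar N(x,\rho)$ itself to be bounded, which is circular — so instead I would bound $\bar N(x,\rho)$ first for $\rho$ close to $1-\kappa$ directly by $\bar N(0,1)$ using that $B_\rho(x) \subseteq B_1(0)$ and that the numerator over $B_\rho(x)$ is at most the numerator over $B_1(0) = \bar N(0,1)\bar H(0,1)$, while the denominator $\bar H(x,\rho)$ is bounded below because $B_\rho(x)$ covers a fixed-size ball on which $u$ is not nearly constant.

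\textbf{Main obstacle.} The real difficulty is the potential circularity: bounding $\bar N(x,r)$ needs a lower bound on $\bar H(x,r)$, which via doubling needs a bound on $\bar N(x,\cdot)$ at a larger scale. I expect this is resolved by the standard trick of first establishing the bound at the \emph{largest} admissible scale $\rho$ (where the denominator $\bar H(x,\rho)$ is manifestly comparable to $\bar H(0,1)$ because $B_\rho(x)$ engulfs a definite ball, and no doubling is needed), and then using monotonicity of $\bar N(x,\cdot)$ to push the bound down to all $r \le \tfrac23(1-\kappa)$. The constant $C(n,\Lambda,\kappa)$ then emerges explicitly from the ratio of the engulfed volume to $(1-\kappa)^n$ and from the interior gradient estimate controlling $|u(x)-u(0)|$.
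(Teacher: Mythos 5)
Your reduction to the single largest scale via monotonicity of $\bar N(x,\cdot)$ is fine and matches what the paper implicitly does, and you have correctly isolated the real difficulty: a lower bound on the denominator $\bar H(x,r_0)=\int_{\partial B_{r_0}(x)}(u-u(x))^2$ in terms of $\bar H(0,1)$. But your proposed resolution of that difficulty does not work as stated. You claim that at the top scale $\bar H(x,\rho)$ is ``manifestly comparable to $\bar H(0,1)$ because $B_\rho(x)$ engulfs a definite ball, and no doubling is needed.'' First, $\bar H(x,\rho)$ is a boundary integral, and a sphere engulfs no ball; passing to a solid integral already requires the sub-mean-value property of $(u-u(x))^2$. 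More seriously, even the solid integral $\int_{B_\sigma(0)}(u-u(x))^2$ is \emph{not} manifestly bounded below by $c\,\bar H(0,1)$: since $u(x)$ is the mean of $u$ on $\partial B_r(x)$, one has $\fint_{\partial B_r(x)}(u-u(x))^2=\min_c\fint_{\partial B_r(x)}(u-c)^2$, i.e.\ the normalization by $u(x)$ makes the denominator as \emph{small} as possible, and a harmonic function can perfectly well be nearly constant on an interior ball while oscillating strongly near $\partial B_1$ (high-degree spherical harmonics do exactly this). What rules this out is precisely the frequency bound $\bar N(0,1)\le\Lambda$ used quantitatively through the doubling inequality of Remark \ref{rem_dou} — the very tool you assert is not needed. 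So the key inequality of the lemma is asserted rather than proved.

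For comparison, the paper closes this step differently: it quotes the Han--Lin bound $N(x,1/2)\le C(n,\Lambda)$ for the \emph{unnormalized} frequency as a black box, and the only new work is the conversion $N\mapsto\bar N$. This is done via the mean-value identity $\fint_{\partial B_r(x)}(u-u(x))^2=\fint_{\partial B_r(x)}u^2-u(x)^2$ together with doubling, $u(x)^2\le H(x,1/3)\le(2/3)^{2N(x,1/3)}H(x,1/2)$, which yields $\bar N(x,1/2)\le N(x,1/2)\bigl(1-(2/3)^{2N(x,1/3)}\bigr)^{-1}$; the required positive lower bound on $N(x,1/3)$ is then extracted from $N(0,1/12)\ge 1$ (with $u(0)=0$) via the inclusion $B_{1/12}(0)\subset B_{1/3}(x)$ and doubling at the origin. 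If you want to salvage your route, you would need to replace ``manifest'' by an actual chain of this kind — e.g.\ Caccioppoli plus $\bar N(0,\cdot)\ge 1$ plus doubling of $\bar H(0,\cdot)$ — at which point you are essentially reconstructing the paper's argument.
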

\begin{proof}
In \cite[Theorem 2.2.8]{hanlin}, a similar lemma is proved with $N(x,r)$ in place of $\bar N(x,r)$. Here we 
only prove the statement for $\kappa = \frac 1 4$ and $r=\frac 1 2 $, a simple covering and compactness 
argument can be used to prove the general case.

Without loss of generality, we assume $u(0)=0$, and so $N(0,r)=\bar N(0,r)\geq 1$ for all $r\leq 1$. By definition:
\begin{gather}
 \bar N (x,1/2)= \frac{r \int_{B_{1/2}(x)} \abs{\nabla u}^2 dV}{\int_{\partial B_{1/2}(x)} (u-u(x))^2 dS} 
= \frac{r^2 \fint_{B_r(x)} \abs{\nabla u}^2 dV}{n \fint_{\partial B_{1/2}(x)} (u-u(x))^2 dS}
\end{gather}
The mean value theorem for harmonic functions gives:
\begin{gather}
 \fint_{\partial B_r(x)} (u-u(x))^2 dS = \fint_{\partial B_r(x)} u^2 dS - \ u(x) ^2 \geq 0\, .
\end{gather}
Using the doubling conditions in equation \eqref{eq_dou}, we get the estimate
\begin{gather}
 u(x)^2 \leq H(x,1/3)\leq H(x,1/2) (2/3)^{2N(x,1/3)} \, . 
\end{gather}
Thus, we have immediately:
\begin{gather}
  \bar N(x,1/2) = \frac {(1/2)^2} n 
\frac{\fint_{B_{1/2}(x)} \abs{\nabla u}^ 2dV}{\qua{\fint_{\partial B_{1/2}(x)} (u)^2dS}-u(x)^2}
\leq N(x,1/2) \ton{1-(2/3)^{2N(x,1/3)}}^{-1}\, .
\end{gather}
By \cite[Theorem 2.2.8]{hanlin}, we have that $N(x,1/2)\leq C(n,\Lambda)$. 
 In order to conclude the proof, we need to show $N(x,1/3)\geq C(n,\Lambda)$.  
This follows from simple algebraic manipulations. Indeed, by repeated applications
 of standard estimates (or the optimal estimate of \cite[Corollary 2.2.7]{hanlin}), we have
\begin{gather}
 \int_{\partial B_{1/3} (x)} u^2 dS \leq \frac 1 3 (n+2N(x,1/3)) 
\int_{B_{1/3}(x)} u^2 dV \leq C(n,\Lambda) \int_{B_1(0)} u^2dV \leq \frac {C(n,\Lambda)} n 
\int_{\partial B_1(0)} u^2 dS ,
\end{gather}
while by using the doubling conditions in equation \eqref{eq_dou}, we have
\begin{gather}
 \int_{\partial B_1(0)} u^2 dS \leq 12^{n-1+2N(0,1)} \int_{\partial B_{1/12}(0)} u^2 dS\ .
\end{gather}
Finally, by the inclusion $B_{1/12}(0)\subset B_{1/3}(x)$ we have
\begin{gather}
 N(x,1/3)=\frac{(1/3) \int_{B_{1/3}(x)} \abs{\nabla u}^ 2} {\int_{\partial B_{1/3}(x)} u^2}
 \geq C(n,\Lambda) N(0,1/12) \geq C(n,\Lambda)\, .
\end{gather}
\end{proof}

\subsection{Quantitative Rigidity and Cone-Splitting}\label{sec_spl}
In this subsection, we will show that the normalized frequency function can be used to characterize
 the $(k,\epsilon,r,x)$-symmetric points for $u$. Then we will prove the cone-splitting theorem for such points.

As we have seen, a function $u$ is a homogeneous harmonic polynomial of degree $d$ if and only
 if $N(0,r)=d$ for all $r$, or equivalently for $r\in (r_1,r_2)$. Using a simple compactness argument 
and the properties of $\bar N$, we turn this statement into a quantitative characterization of the almost symmetric points.
\begin{theorem}\label{th_N2hom}
 Fix $\eta>0$ and $0\leq \gamma <1$, and let $u$ be a nonconstant harmonic function with 
$\bar N(0,1)\leq \Lambda$. Then there exists a positive $\epsilon=\epsilon(n,\Lambda,\eta,\gamma)$ such that if
\begin{gather}
\bar N(0,1)-\bar N(0,\gamma)< \epsilon\, ,
\end{gather}
then $u$ is $(0,\eta,1,0)$-symmetric.
\end{theorem}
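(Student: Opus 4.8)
The plan is to argue by contradiction using a compactness/blow-up argument, exploiting the monotonicity of $\bar N$ and the rigidity statement that $\bar N(0,r_1)=\bar N(0,r_2)$ forces homogeneity. Suppose the conclusion fails: then for fixed $\eta,\gamma$ there is a sequence of nonconstant harmonic functions $u_i$ on $B_1(0)$ with $\bar N^{u_i}(0,1)\le\Lambda$ and $\bar N^{u_i}(0,1)-\bar N^{u_i}(0,\gamma)\to 0$, yet no $u_i$ is $(0,\eta,1,0)$-symmetric. Replacing $u_i$ by $T_{0,1}u_i$ (i.e.\ normalizing so that $u_i(0)=0$ and $\fint_{\partial B_1(0)}u_i^2=1$), we keep the $u_i$ harmonic and do not change $\bar N^{u_i}(0,r)$ by Lemma \ref{l_blowup}; the normalization and the doubling condition \eqref{eq_dou} give uniform $L^2$ and then (by interior elliptic estimates) uniform $C^\infty_{loc}$ bounds on $B_1(0)$.

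Next I would extract a subsequence converging in $C^\infty_{loc}(B_1(0))$ to a harmonic function $u_\infty$. The uniform lower bound $\bar N^{u_i}(0,r)\ge 1$ (from the remark after the monotonicity theorem) together with the upper bound $\Lambda$ and monotonicity shows the limit is nonconstant: indeed $\int_{\partial B_{1/2}(0)}u_i^2$ is bounded below away from $0$ uniformly by the doubling condition, so $\int_{\partial B_{1/2}(0)}u_\infty^2>0$. Then I would check that $\bar N^{u_i}(0,r)\to\bar N^{u_\infty}(0,r)$ for each fixed $r\in(0,1)$: the numerator $r\int_{B_r(0)}|\nabla u_i|^2$ and denominator $\int_{\partial B_r(0)}u_i^2$ both pass to the limit by $C^\infty_{loc}$ convergence, and the denominator stays bounded below. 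Passing to the limit in $\bar N^{u_i}(0,1)-\bar N^{u_i}(0,\gamma)\to 0$ gives $\bar N^{u_\infty}(0,1)=\bar N^{u_\infty}(0,\gamma)$, hence by the rigidity half of the monotonicity theorem $u_\infty$ is a homogeneous harmonic polynomial centered at $0$, i.e.\ $0$-symmetric, with $\fint_{\partial B_1(0)}u_\infty^2=1$.

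Finally, I would derive the contradiction: since $u_i\to u_\infty$ in $C^0(\overline{B_1(0)})$ (convergence is $C^\infty$ on slightly larger balls, or one rescales from $B_1$ to $B_{1-\delta}$ and uses doubling to control the tails — this is a minor technical point), we have $\fint_{B_1(0)}|u_i - u_\infty|^2\to 0$. Taking $P=u_\infty$ as the competitor $0$-symmetric polynomial with $\fint_{\partial B_1(0)}|P|^2=1$, for $i$ large we get $\fint_{B_1(0)}|T_{0,1}u_i - P|^2<\eta$, so $u_i$ is $(0,\eta,1,0)$-symmetric, contradicting the choice of the sequence.

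The main obstacle is the endpoint/boundary issue: the integrals defining $\bar N(0,1)$ and the symmetry condition live on $\overline{B_1(0)}$ and $\partial B_1(0)$, whereas $C^\infty_{loc}$ convergence of harmonic functions is only interior. The clean fix is to note that the hypotheses already hold on $B_1(0)$ with $\bar N(0,1)\le\Lambda$, so by monotonicity and the doubling condition \eqref{eq_dou} the $L^2$ norms of $u_i$ on annuli near $\partial B_1(0)$ are comparable to those on interior spheres with constants depending only on $n$ and $\Lambda$; this gives the needed equi-integrability up to the boundary and lets all the limits above go through. One should also record that $\epsilon$ genuinely depends on $\Lambda$ (through the lower bound on the denominators and the doubling constants) and on $\gamma$ (through which two scales are being compared), which matches the statement.
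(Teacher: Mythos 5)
Your proposal is correct and follows essentially the same route as the paper: a contradiction/compactness argument that normalizes $u_i=T_{0,1}u_i$, extracts a convergent subsequence, passes the frequency pinching to the limit, and invokes the rigidity case of the monotonicity theorem to identify the limit as a homogeneous harmonic polynomial serving as the competitor $P$. The only difference is cosmetic: the paper handles the boundary terms via weak $W^{1,2}$ convergence and the trace theorem, whereas you use interior $C^\infty$ convergence plus the doubling condition \eqref{eq_dou}; both are standard and adequate.
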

\begin{proof}
 Suppose by contradiction that there exists a sequence of harmonic 
functions $u_i$ with $\bar N^{u_i}(0,1)\leq \Lambda$, 
$\bar N^{u_i}(0,1)-\bar N^{u_i}(0,\gamma)< \frac 1 i$ but all the $u_i$ are not $(0,\eta,1,0)$-symmetric.

 From the invariance under rescaling of the frequency and of 
the concept of almost symmetry, we can assume without loss of 
generality that $\fint_{\partial B_1(0)} u_i^2 dS =1$ and $u_i(0)=0$ 
for all $i$, i.e. $u_i = T_{0,1} u_i$. Thus by compactness, $u_i$ converges 
weakly in $W^{1,2}(B_1(0))$ to a harmonic function $u$, and by elliptic estimates, 
the convergence is also in the local $C^1(B_1)$ sense. Using the theory of traces 
for Sobolev spaces, it is easily seen that $\fint_{\partial B_1(0)} u^2 dS =1$ and that 
$N^u (0,1)\leq \Lambda$. Moreover, using the monotonicity of $\bar N$ and passing to 
the limit in $n$ we have:
\begin{gather}
 \bar N^u(0,1)-\bar N^u(0,\gamma)=0\, .
\end{gather}
This implies that $u$ is a harmonic homogeneous polynomial, and since
\begin{gather}
 \lim_{i\to \infty}\fint_{\partial B_1(0)} (u_i-u)^2 dS =0\, ,
\end{gather}
we obtain a contradiction.
\end{proof}
\begin{remark}
 By the invariance properties of $\bar N$, it is evident that we can replace
 the hypothesis $\bar N(0,1)-\bar N(0,\gamma)<\epsilon$ with $\bar N(0,r)-\bar N(0,\gamma r)<\epsilon$
 and obtain that $u$ is $(0,\eta,r,0)$-symmetric.
\end{remark}
\begin{remark}[Quantitative Differentiation]\label{qd}
Note that the above lemma automatically provides a control on the number 
of scales at which $u$ is not $(0,\eta,r,x)$-symmetric. Indeed, set 
$r_i=\gamma^i$ for some $0<\gamma<1$. By monotonicity, there can be only a definite 
 number of $i$'s such that $\bar N(x,\gamma^i)-\bar N(x,\gamma^{i+1}) \geq \epsilon$. 
Then $u$ is $(0,\eta,\gamma^i,x)$-symmetric, for all the ``good'' values of $i$.
\end{remark}

In order to describe how two almost symmetric points interact, we briefly recall 
what happens to homogeneous polynomials.
\begin{proposition}\label{prop_poly}
 Let $P:\R^n\to \R$ be a harmonic polynomial of degree $d$, homogeneous 
with respect to the origin. Suppose also that $P$ is symmetric with respect to the $k$ dimensional subspace $V$. Then
\begin{enumerate}
 \item $P$ is of degree $1$ if and only if it is $n-1$ symmetric
 \item if $P$ is not $n-1$ symmetric, and $P$ is also $0$-symmetric 
with respect to $x\not \in V$, then $P$ is $k+1$-symmetric with respect to $\operatorname{span}(V,x)$.
\end{enumerate}
\end{proposition}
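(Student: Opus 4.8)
The plan is to prove the two items separately, both by direct Euler-type identities for homogeneous polynomials.

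For item (1), the nontrivial direction is that an $(n-1)$-symmetric harmonic homogeneous polynomial must be linear. First I would use the symmetry to reduce to one variable: if $P$ is homogeneous of degree $d$ and invariant under translation by an $(n-1)$-dimensional subspace $W$, then after an orthogonal change of coordinates $P$ depends only on the single coordinate $t$ orthogonal to $W$, so $P(y) = c\,t^d$ for some constant $c$ (homogeneity of degree $d$ in the one remaining variable). Then I would impose harmonicity: $\Delta P = c\, d(d-1) t^{d-2} = 0$, which forces $d \le 1$. Since $P$ is nonconstant (it has a well-defined degree $d \ge 1$, and the stratification context assumes nondegeneracy), we get $d = 1$. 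Conversely a nonzero linear function is clearly invariant under translation by its $(n-1)$-dimensional kernel, hence $(n-1)$-symmetric. This direction is essentially immediate.

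For item (2), suppose $P$ is harmonic, homogeneous of degree $d$ with respect to the origin, symmetric with respect to the $k$-dimensional subspace $V$, not $(n-1)$-symmetric, and also $0$-symmetric (homogeneous of some degree $d'$) with respect to a point $x \notin V$. The first step is to observe that the two homogeneity conditions force $d = d'$: comparing the two Euler identities $\sum_i y_i \partial_i P = d\, P$ and $\sum_i (y_i - x_i)\partial_i P = d'\, P$ and evaluating, say, the growth rate along a ray shows the degrees must agree (alternatively, subtract to get $\sum_i x_i \partial_i P = (d - d') P$; if $d \ne d'$ then $P$ would be homogeneous of degree $d - d'$ with respect to translation in the $x$-direction combined with the origin-homogeneity, and iterating/degree considerations force a contradiction with $P$ being a genuine degree-$d$ polynomial — I would phrase this cleanly as: a polynomial $P$ with $\sum x_i \partial_i P = \mu P$ for $\mu \ne 0$ and $x \ne 0$ must vanish, contradicting nondegeneracy). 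Once $d = d'$, subtracting the two Euler identities gives $\sum_i x_i \partial_i P = 0$, i.e. $P$ is invariant under translation in the direction $x$. This is the footnote computation in the excerpt. Combining the directional invariances: $P$ is invariant under translations by $V$ and by the line $\mathbb{R}x$; since $x \notin V$, the span $\mathrm{span}(V, x)$ has dimension $k+1$, and $P$ is invariant under translation by every vector of a spanning set, hence by the whole subspace. Finally $P$ is still $0$-symmetric (homogeneous of degree $d$ with respect to the origin) by hypothesis, so $P$ is $(k+1)$-symmetric with respect to $\mathrm{span}(V,x)$, as claimed.

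The only mild subtlety — the step I'd single out as needing care rather than the routine Euler-identity manipulations — is justifying that the two homogeneity degrees coincide (the claim $d = d'$), since the definition of $0$-symmetric only asks for homogeneity of \emph{some} degree, not a prescribed one. I would handle it via the observation above: if $\sum_i x_i \partial_i P = (d-d')P =: \mu P$ with $\mu \ne 0$, then writing $P = \sum_j P_j$ in homogeneous pieces with respect to the variable along $x$, the operator $\sum x_i \partial_i$ acts on each piece by a nonnegative integer, so $P$ is supported on a single such piece of "weight" $\mu$; but then restricting $P$ to a line through $0$ in a direction not parallel to $x$ and not in $V$ shows $P$ is simultaneously homogeneous of degree $d$ there and forced to a contradiction with being nonconstant unless $\mu = 0$. (In the application of this proposition, $P$ is always a nonzero harmonic polynomial of positive degree, so this is harmless.) Everything else is linear algebra plus the Laplacian computation in item (1).
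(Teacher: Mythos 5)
Your overall route is the same one the paper takes (the paper relegates the computation to its introductory footnote and to ``a standard exercise in algebra''): Euler identities at the two points of homogeneity, subtraction to get $\sum_i x_i\partial_i P\equiv 0$, hence translation invariance along $\operatorname{span}(V,x)$, together with the one-variable Laplacian computation for item (1), which you handle correctly.

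There is, however, one flawed justification, precisely at the step you yourself single out as the subtle one: showing the two homogeneity degrees agree. You argue that, decomposing $P$ into pieces graded by degree in the variable along $x$, the operator $\sum_i x_i\partial_i$ ``acts on each piece by a nonnegative integer.'' That is the behaviour of the operator $t\,\partial_t$; the operator you actually have is the constant-coefficient directional derivative $\partial_x=\sum_i x_i\partial_i$ (the $x_i$ here are the fixed coordinates of the point $x$), which strictly \emph{lowers} the degree along $x$ and does not act diagonally on that grading, so the conclusion that $P$ is ``supported on a single piece of weight $\mu$'' does not follow, and the subsequent ``restrict to a line'' step is too vague to repair it. Fortunately the claim itself is immediate and your argument needs nothing more: translation does not change the degree of a polynomial, so $z\mapsto P(x+z)$ has degree $d$, while a nonzero homogeneous polynomial of degree $d'$ has degree exactly $d'$; hence $d'=d$. (Equivalently, if $\partial_x P=\mu P$ with $\mu\neq 0$ and $P\not\equiv 0$, then $\deg(\partial_x P)<\deg P=\deg(\mu P)$, a contradiction; or note $P(y+tx)=e^{\mu t}P(y)$ cannot be polynomial in $t$.) With that one substitution your proof is complete and coincides with the argument the paper intends.
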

\begin{proof}
Since $P$ is supposed to be harmonic, (1) is straightforward to prove. (2) is a standard exercise in 
algebra. (A similar computation is carried out in the proof of \cite[theorem 4.1.3]{hanlin}).
\end{proof}

By using a compactness argument similar to the one used for Theorem \ref{th_N2hom}, we
can  turn the previous proposition into a quantitative  cone-splitting theorem for almost 
symmetric harmonic functions. As always, note that this statement is scale invariant.
\begin{theorem}\label{th_qspl}
Fix some positive 
$\epsilon,\tau$ and $0<r\leq 1$ and let $k\leq n-2$. 
Let $u$ be a harmonic function with $\bar N(0,1)\leq \Lambda$.
There exists a positive
 $\delta=\delta (n,\Lambda,\tau,\epsilon,r)$ such that if
\begin{enumerate}
 \item $u$ is $(k,\delta,r,0)$-symmetric with respect to the $k$-dimensional subspace $V$,
 \item for some $x\in B_{r}(0)\setminus B_\tau (V)$, $u$ is $(0,\delta,r,x)$-symmetric,
\end{enumerate}
then $u$ is also $(k+1,\epsilon,1,0)$-symmetric.
\end{theorem}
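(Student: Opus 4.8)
The plan is to argue by contradiction, exactly in the spirit of the compactness argument used for Theorem \ref{th_N2hom}, but now feeding in \emph{two} pieces of almost-symmetry data and appealing to the rigid statement in Proposition \ref{prop_poly}. Suppose the conclusion fails: then for the fixed $\epsilon,\tau,r,k,\Lambda$ there is a sequence of harmonic functions $u_i$ with $\bar N^{u_i}(0,1)\le\Lambda$, each $u_i$ being $(k,\delta_i,r,0)$-symmetric with respect to some $k$-dimensional subspace $V_i$ and $(0,\delta_i,r,x_i)$-symmetric for some $x_i\in B_r(0)\setminus B_\tau(V_i)$, with $\delta_i\to 0$, yet no $u_i$ is $(k+1,\epsilon,1,0)$-symmetric. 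By scale invariance (Lemma \ref{l_blowup}) and the observation that the hypotheses and conclusion are invariant under the normalization defining $T_{0,1}u_i$, I may assume $u_i(0)=0$ and $\fint_{\partial B_1(0)}u_i^2\,dS=1$, so $u_i=T_{0,1}u_i$.

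Next I would extract limits. As in Theorem \ref{th_N2hom}, the frequency bound plus elliptic estimates give, after passing to a subsequence, $u_i\to u$ weakly in $W^{1,2}(B_1(0))$ and in $C^1_{loc}(B_1(0))$, with $u$ harmonic, $u(0)=0$, $\fint_{\partial B_1(0)}u^2\,dS=1$ (by trace theory), and $\bar N^u(0,1)\le\Lambda$. Also $V_i\to V$ for some $k$-dimensional subspace $V$ (the Grassmannian is compact) and $x_i\to x\in \overline{B_r(0)}$ with $\operatorname{dist}(x,V)\ge\tau$, so in particular $x\notin V$. I then need to see that the two vanishing almost-symmetries pass to the limit as honest symmetries of $u$. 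For the $0$-symmetry at $0$: $u_i$ being $(0,\delta_i,r,0)$-symmetric means $T_{0,r}u_i$ is $L^2(B_1)$-close to a homogeneous polynomial; since $T_{0,r}u_i$ is a fixed rescaling/renormalization of $u_i$ and $u_i\to u$ in $C^1_{loc}$ (hence in $L^2(B_1)$ after checking the normalizing denominators converge, which they do because the frequency bound keeps $\bar H(0,r)$ bounded above and below), the limit $T_{0,r}u$ is itself a homogeneous harmonic polynomial; by the rigidity in the frequency monotonicity theorem this forces $u$ to be $0$-symmetric about $0$. Similarly for the $k$-symmetry about $0$ with respect to $V_i$: the closeness to a $k$-symmetric polynomial $P_i$ (which I may take normalized, hence with frequency $\le\Lambda$, hence precompact) passes to the limit to show $u$ is $k$-symmetric with respect to $V$. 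Finally the $(0,\delta_i,r,x_i)$-symmetry: here I use that $x_i\to x$ and the $C^1_{loc}$ convergence to identify $T_{x,r}u$ as a homogeneous polynomial, so $u$ is $0$-symmetric about $x$.

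Now Proposition \ref{prop_poly} applies to the limit: $u$ is a harmonic polynomial, homogeneous about $0$, $k$-symmetric with respect to $V$, and $0$-symmetric about $x\notin V$. If $u$ is $n-1$-symmetric it is already $(k+1)$-symmetric about $0$ trivially (since $k\le n-2<n-1$ and one of the $n-1$ translation directions, together with $V$, spans a $(k+1)$-plane — here one must be slightly careful but it is elementary), and otherwise part (2) of the proposition gives that $u$ is $(k+1)$-symmetric with respect to $\operatorname{span}(V,x)$. Either way $u$ is $(k+1)$-symmetric, and since it is normalized on $\partial B_1(0)$ and the $u_i\to u$ in $L^2(B_1(0))$, for $i$ large $u_i$ is $(k+1,\epsilon,1,0)$-symmetric, contradicting the choice of the sequence.

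The main obstacle, and the only point requiring real care rather than routine compactness, is the bookkeeping around the \emph{normalizing denominators}: $T_{x_i,r}u_i$ and $T_{0,r}u_i$ each divide by $\big(\fint_{\partial B_1}(u_i(\,\cdot\,)-u_i(\cdot))^2\big)^{1/2}$, and I must ensure these quantities stay bounded away from $0$ and $\infty$ along the sequence so that $L^2$-convergence of $u_i$ upgrades to $L^2$-convergence of the $T$'s to the corresponding $T$'s of $u$ (and that the limiting denominator is nonzero, using that $u$ is nonconstant). This is where the uniform frequency bound is used decisively: by Lemma \ref{lemma_strong2bar} the frequency is bounded at all interior points and scales under consideration, and then the doubling inequality \eqref{eq_dou} (applied to $u_i-u_i(x_i)$) controls $\bar H(x_i,r)$ from above and below in terms of $\bar H(x_i,r_0)$ for a fixed $r_0$, which in turn is controlled by the global normalization. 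Once that is in hand, the rest is the standard weak-$W^{1,2}$/$C^1_{loc}$ compactness plus the elementary algebra of Proposition \ref{prop_poly}. Note the hypothesis $x\in B_r(0)\setminus B_\tau(V)$ (open ball for $x_i$, closed for the limit) is exactly what keeps $\operatorname{dist}(x,V)\ge\tau>0$ in the limit, which is what part (2) of the proposition requires.
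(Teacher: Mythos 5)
Your proof is correct and follows essentially the same contradiction--compactness route as the paper: normalize the sequence, extract weak $W^{1,2}$ and local $C^1$ limits of $u_i$, $V_i$, $x_i$, use the frequency bound and the doubling condition \eqref{eq_dou} to keep the normalizing denominators bounded away from $0$ so that the almost-symmetries pass to the limit as exact symmetries, and then invoke Proposition \ref{prop_poly}. The additional care you devote to the denominators and to the degenerate $(n-1)$-symmetric case is more detail than the paper records, but the argument is the same.
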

\begin{proof}
We set up the usual contradiction argument. In particular, choose a sequence $u_i$ 
of harmonic functions with $u_i(0)=0$ and $\fint_{\partial B_1(0)} u_i^2 dS =1$ 
which is $(k,i^{-1},r,0)$-symmetric with respect to $V_i$ and $(0,i^{-1},r,0)$-symmetric 
with respect to $x_i$. The bound on the frequency implies that $u_i$ is bounded in
 $W^{1,2}(B_1(0))$. Thus, after passing to a subsequence if necessary, 
we can assume that $u_i \to u$, $V_i\to V$ and $x_i\to x \not \in V$.

On the other hand, by hypothesis $T_{0,r} u_i$ converges to a $k$-symmetric 
normalized homogeneous polynomial $P$. By the doubling conditions in equation \eqref{eq_dou}, we have
\begin{gather}
 \fint_{\partial B_r} u_i^2 dS \geq r^{2\Lambda}>0 \ ,
\end{gather}
so $P=u$. In a similar fashion, $u$ is also a $(0,x)$-symmetric polynomial,
 and by Proposition \ref{prop_poly} $P$ is $(k+1,0)$-symmetric.

Since $u_i$ converges to $P$ in $W^{1,2}(B_1(0))$, we obtain a contradiction.
\end{proof}

The following equivalent version of Theorem \ref{th_qspl}
 will be useful in subsequent sections.

\begin{corollary}\label{cor_symsum}
Fix some positive 
$\eta,\tau$ and $0<r\leq 1$ and let $k\leq n-2$.
Let $u$ be a harmonic function with $\bar N(0,1)\leq \Lambda$. 
There exists $\epsilon=\epsilon(n,\Lambda,\tau,\eta,r)>0$ such that if
\begin{enumerate}
 \item $u$ is $(0,\epsilon,r,0)$-symmetric,
 \item for every subspace $V$ of dimension $\leq k$, there exists 
$x\in B_{r}(0)\setminus B_{\tau}(V)$ such that $u$ is \hbox{$(x,\epsilon,r,0)$-symmetric},
\end{enumerate}
then $u$ is also $(k+1,\eta,1,0)$-symmetric.
\end{corollary}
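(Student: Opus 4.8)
The plan is to deduce Corollary \ref{cor_symsum} from Theorem \ref{th_qspl} by an iterative application of cone-splitting, peeling off one symmetry at a time. The hypotheses are arranged precisely so that the statement ``for every subspace $V$ of dimension $\leq k$ there is a point in $B_r(0)\setminus B_\tau(V)$ at which $u$ is almost $0$-symmetric'' lets us build up a spanning set of points that are pairwise $\tau$-separated in the appropriate sense. I will produce constants $\epsilon_0 > \epsilon_1 > \cdots > \epsilon_k > 0$, one per step, and set $\epsilon = \epsilon_k$.

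First I would fix the chain of scales: set $\epsilon_0 = \epsilon_0(n,\Lambda,\eta,r)$ small (to be chosen) and, inductively, given $\epsilon_j$, let $\epsilon_{j+1} = \delta(n,\Lambda,\tau,\epsilon_j,r)$ be the constant furnished by Theorem \ref{th_qspl} applied with target accuracy $\epsilon_j$. Now argue as follows. Apply hypothesis (2) with $V = \{0\}$ (the zero subspace, dimension $0\le k$): there is $x_1 \in B_r(0)\setminus B_\tau(\{0\})$ at which $u$ is $(0,\epsilon,r,x_1)$-symmetric. Combined with hypothesis (1), that $u$ is $(0,\epsilon,r,0)$-symmetric, Theorem \ref{th_qspl} (with $k=0$, separation $\tau$, target $\epsilon_{k-1}$) gives that $u$ is $(1,\epsilon_{k-1},r,0)$-symmetric with respect to $V_1 = \operatorname{span}(x_1)$, provided $\epsilon \le \epsilon_k$. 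Now iterate: having shown $u$ is $(j,\epsilon_{k-j},r,0)$-symmetric with respect to some $j$-dimensional $V_j$, apply hypothesis (2) to $V_j$ to get $x_{j+1}\in B_r(0)\setminus B_\tau(V_j)$ with $u$ $(0,\epsilon,r,x_{j+1})$-symmetric; then Theorem \ref{th_qspl} (with $k=j\le n-2$, target $\epsilon_{k-j-1}$) upgrades this to $(j+1,\epsilon_{k-j-1},r,0)$-symmetry with respect to $V_{j+1} = \operatorname{span}(V_j, x_{j+1})$. After $k+1$ steps we obtain that $u$ is $(k+1,\epsilon_{-1},r,0)$-symmetric; relabeling, this is the $(k+1,\eta,r,0)$-symmetry we want, and by the scale-invariance remark following Theorem \ref{th_N2hom} (or directly by the scale-invariance of all the notions involved) it upgrades to $(k+1,\eta,1,0)$-symmetry once $r$ is accounted for. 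One subtlety: the definition of $(k,\epsilon,r,x)$-symmetry with respect to a specific subspace $V$ is what Theorem \ref{th_qspl} both consumes and produces, so the bookkeeping of $V_0 \subset V_1 \subset \cdots \subset V_{k+1}$ must be carried through the induction; this is routine since each $V_{j+1}$ is obtained by adjoining a single point lying $\tau$-far from $V_j$.

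The main obstacle is not the logical structure but the interplay of the error parameters across scales: a single application of Theorem \ref{th_qspl} refers to symmetry at scale $r$ on the left and scale $1$ on the right, so naively iterating would force the scale to grow past $1$. The fix is to keep all the $j$-symmetry statements at the \emph{same} scale $r$ — which is legitimate because $(j,\epsilon,r,0)$-symmetry of $u$ is, after the rescaling $y \mapsto ry$, exactly $(j,\epsilon,1,0)$-symmetry of $T_{0,r}u$, and $T_{0,r}u$ is again harmonic with controlled frequency by Lemma \ref{lemma_strong2bar} — so one applies Theorem \ref{th_qspl} to the rescaled function at unit scale each time, then rescales back. Thus each step costs only a loss in the $\epsilon$-parameter (controlled by the finite chain $\epsilon_0,\dots,\epsilon_k$) and no loss in scale, and since $k+1 \le n-1$ the chain has bounded length depending only on $n$. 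Finally I would note that all constants produced depend only on $n, \Lambda, \tau, \eta, r$, as required, and that the argument uses $k \le n-2$ only through the corresponding hypothesis of Theorem \ref{th_qspl} (needed so that Proposition \ref{prop_poly}(2) applies, i.e., so that the relevant polynomial is not already $(n-1)$-symmetric).
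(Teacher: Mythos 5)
Your proposal is correct and is essentially the induction the paper has in mind (the paper omits the proof, describing it as ``a simple induction argument'' in the style of \cite{ChNa1,ChNa2}): iterate Theorem \ref{th_qspl} with a decreasing chain of accuracy parameters, using the fact that hypothesis (2) quantifies over \emph{all} subspaces of dimension $\leq k$ so that each newly produced symmetry subspace $V_j$ can be fed back in. The one imprecision is your final step: $(k+1,\eta,r,0)$-symmetry does not upgrade to $(k+1,\eta,1,0)$-symmetry ``by scale-invariance'' (passing from a smaller to a larger scale costs something and requires the frequency/doubling bound); the clean fix is simply to apply Theorem \ref{th_qspl} in its stated form on the last iteration, since its conclusion is already at scale $1$.
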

The proof of this corollary is via a simple induction argument which will be omitted.
For similar arguments see \cite{ChNa1,ChNa2}

We close this subsection with the proof of point (2) in Theorem \ref{t:quantstrat}. 
This proposition is essential for  turning estimates on the singular strata $\cS^{k}_{\eta,r}$
 into estimates on the critical set. In fact,
we show the following.
\begin{proposition}\label{prop_qcrit}
 Let $u$ be harmonic with $\bar N(x,r)\leq \Lambda$.
Fix  $\epsilon>0$ and $k\in \N$. There exists $\bar \eta=\bar \eta(n,k,\epsilon,\Lambda)>0$ 
such that if $u$ is $(n-1,\bar \eta,r,x)$-symmetric, then
 \begin{gather}
  \norm {T_{x,r}u - L}_{C^{k} (B_{1/2}(0))}\leq \epsilon\, ,
 \end{gather}
where $L$ is a linear polynomial with $\fint_{\partial B_r} \abs L ^2 dS =1$.
 In particular, by choosing $k=1$ and $\epsilon$ small enough, 
there exists  $\eta=\eta(n,\Lambda)$ such that if $u$ is $(n-1,\eta,r,x)$-symmetric then $r_x\geq r$.
\end{proposition}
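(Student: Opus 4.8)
The plan is to prove Proposition \ref{prop_qcrit} by the same compactness-and-contradiction scheme already used twice in this section (Theorems \ref{th_N2hom} and \ref{th_qspl}), and then to extract the final sentence about $r_x$ as a direct consequence of the $k=1$ case. By the scale- and translation-invariance of both the normalized frequency and the notion of $(k,\epsilon,r,x)$-symmetry (Lemma \ref{l_blowup} and the remarks following Theorem \ref{th_N2hom}), it suffices to treat $x=0$, $r=1$, and to normalize $u(0)=0$, $\fint_{\partial B_1(0)} u^2\,dS = 1$, so that $u = T_{0,1}u$.

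First I would set up the contradiction: suppose for some $\epsilon>0$ and $k\in\N$ there is a sequence of harmonic functions $u_i$ with $\bar N^{u_i}(0,1)\leq\Lambda$, each normalized as above, each $(n-1,i^{-1},1,0)$-symmetric, but with $\|u_i - L\|_{C^k(B_{1/2}(0))} > \epsilon$ for every normalized linear $L$. The frequency bound gives a uniform $W^{1,2}(B_1(0))$ bound, so after passing to a subsequence $u_i \to u$ weakly in $W^{1,2}$ and, by interior elliptic estimates, in $C^k_{loc}(B_1(0))$ (in particular in $C^k(B_{1/2}(0))$). As in the proof of Theorem \ref{th_qspl}, the doubling estimate \eqref{eq_dou} gives $\fint_{\partial B_r(0)} u_i^2\,dS \geq r^{2\Lambda} > 0$ uniformly, which prevents the limit from degenerating and, combined with the $(n-1,i^{-1},1,0)$-symmetry hypothesis, forces $u$ to equal the $W^{1,2}$-limit of the approximating $(n-1)$-symmetric normalized polynomials $P_i$. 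Hence $u$ is itself a harmonic, $(n-1)$-symmetric, homogeneous polynomial with $\fint_{\partial B_1(0)}u^2 = 1$; by part (1) of Proposition \ref{prop_poly}, an $(n-1)$-symmetric harmonic homogeneous polynomial has degree $1$, so $u = L$ is a normalized linear function. Then $\|u_i - L\|_{C^k(B_{1/2}(0))}\to 0$ along the subsequence, contradicting $\|u_i - L\|_{C^k(B_{1/2}(0))} > \epsilon$.

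For the last assertion, apply the proposition with $k=1$ and with $\epsilon$ chosen smaller than $\beta(n)/2$ (and small enough that a normalized linear function together with a $C^1(B_{1/2}(0))$-perturbation of size $\epsilon$ is still within $\beta(n)/2$ in $C^1(B_{1/2}(0))$ of \emph{some} normalized linear function — trivially $L$ itself works): if $u$ is $(n-1,\eta,r,x)$-symmetric with $\eta\leq\bar\eta(n,1,\epsilon,\Lambda)$, then $\|T_{x,r}u - L\|_{C^1(B_{1/2}(0))}\leq\epsilon < \beta(n)/2$ for a normalized $L$, so by the definition of $r_x$ (with $s=r$) we get $r_x\geq r$. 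Set $\eta(n,\Lambda) = \bar\eta(n,1,\epsilon(n),\Lambda)$.

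The one point requiring a little care — the main (minor) obstacle — is justifying that weak $W^{1,2}$ convergence of the $u_i$ upgrades to $C^k$ convergence on $B_{1/2}(0)$ and that the limit is genuinely nonzero and harmonic: this is exactly the argument already carried out in Theorems \ref{th_N2hom} and \ref{th_qspl} (interior Schauder/elliptic estimates for harmonic functions, plus the uniform lower bound on $\fint_{\partial B_r}u_i^2$ from \eqref{eq_dou} to rule out concentration/vanishing), so I would simply invoke it rather than repeat it. Everything else is the purely algebraic input of Proposition \ref{prop_poly}(1), which is already available.
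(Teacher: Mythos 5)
Your proposal is correct and follows essentially the same route as the paper: the paper's proof is exactly the ``usual contradiction--compactness argument'' (weak $W^{1,2}$ convergence upgraded to $C^k$ convergence on compact subsets by interior elliptic estimates, nondegeneracy of the limit via the doubling estimate, and the fact that an $(n-1)$-symmetric harmonic homogeneous polynomial is linear), with the final claim about $r_x$ obtained by taking $\epsilon$ below $\abs{\nabla L}/2=\beta(n)/2$ for a normalized linear $L$. Your write-up merely fills in the details that the paper leaves implicit.
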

\begin{proof}
The proof is a simple application of the usual contradiction-compactness argument.
 Note that, by elliptic estimates, if $u_i$ converges to $u$ in the weak $W^{1,2}(B_1(0))$ sense,
 then for all $K\Subset B_1(0)$ the convergence is also in the metric of $C^{\infty}(K)$.
 Note also that if $L$ is a linear function with $\fint_{\partial B_1(0)} \abs L^2 dS =1$, 
then $\nabla L$ is a vector of fixed positive length. Thus the second part of the statement
 can be proved by choosing $\epsilon = \abs{\nabla L}/2$.
\end{proof}

\subsection{The Frequency Decomposition}
We are now ready to prove Theorem \ref{t:quantstrat}.  The proof employs the same
techniques  that were introduced for corresponding purposes in \cite{ChNa1,ChNa2};
the reader may wish to consult these references. Instead of proving the statement for any $r>0$, 
we fix a $0<\gamma<1$ and restrict ourselves to the case $r=\gamma^j$ for any $j\in \N$.
 It is evident that the general statement follows. For the reader's convenience we restate 
Theorem \ref{t:quantstrat} under this convention.

\begin{theorem}\label{th_main_proof}
Let $u:B_1(0)\to\dR$ be a harmonic function with $\bar N^u(0,1)\leq \Lambda$. 
Then for every $j\in \N$, $\eta>0$ and $k\leq n-2$, there exists $0<\gamma(n,\eta,\Lambda)<1$ such that
\begin{align}
{\rm Vol}\ton{B_{\gamma^j}(\cS^k_{\eta,\gamma^j})\cap B_{1/2}(0)}\leq C(n,\Lambda,\eta) \ton{\gamma^j}^{n-k-\eta}\, .
\end{align}
\end{theorem}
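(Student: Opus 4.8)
The plan is to prove the volume estimate by a covering argument built on the three ingredients highlighted in the introduction: quantitative differentiation (Remark \ref{qd}), cone-splitting (Corollary \ref{cor_symsum}), and a frequency decomposition of $B_{1/2}(0)$ into pieces indexed by the set of scales at which $u$ is almost $0$-symmetric. Fix $\gamma \in (0,1)$ to be chosen small (depending on $n,\eta,\Lambda$), and write $r_i = \gamma^i$. For each point $x$ and each scale $i$ let $\epsilon = \epsilon(n,\Lambda,\tau,\eta,\gamma)$ be the constant from Corollary \ref{cor_symsum}, and record whether $u$ is $(0,\epsilon,r_i,x)$-symmetric; by quantitative differentiation the number of ``bad'' scales $i \le j$ (those where the normalized frequency drops by at least the threshold associated to $\epsilon$) is bounded by some $Q = Q(n,\Lambda,\epsilon)$. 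Thus any $x \in \cS^k_{\eta,\gamma^j}$ is assigned a word $T \subseteq \{0,1,\dots,j\}$ recording its bad scales, with $|T| \le Q$; the number of such words is at most $\binom{j+1}{\le Q} \le C(Q) j^Q$, which is the ``far fewer pieces than apriori possible'' observation.

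The core is then the per-word estimate. Following \cite{ChNa1,ChNa2}, I would show: if $E \subseteq \cS^k_{\eta,\gamma^j} \cap B_{1/2}(0)$ is a set all of whose points share the same bad-scale word $T$, then $E$ can be covered by at most $C(n)^{|T|}\,(\gamma^j)^{-k}$ balls of radius $\gamma^j$. The inductive step in scale works as follows: given a ball $B_{r_i}(y)$ that meets $E$, consider a maximal $\tau r_i$-separated subset of $E \cap B_{r_i}(y)$. If this set is \emph{not} contained in a $\tau r_i$-neighborhood of some $k$-dimensional affine subspace, then since every point of $E$ is $(0,\epsilon,r_i,\cdot)$-symmetric whenever $i \notin T$ — and at good scales we do have $0$-almost-symmetry by Theorem \ref{th_N2hom} — Corollary \ref{cor_symsum} forces $u$ to be $(k+1,\eta,\cdot)$-symmetric at the center at that scale, contradicting $y$ (or a nearby point of $E$) lying in $\cS^k_{\eta,\gamma^j}$ provided we are at a good scale. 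Hence at good scales the separated net lies near a $k$-plane and is covered by $\le C(n)\gamma^{-k}$ balls of radius $r_{i+1}$; at the (few) bad scales we pay the trivial factor $C(n)\gamma^{-n}$, but there are at most $Q$ of these so the total extra cost is $C(n)^Q \gamma^{-nQ}$, absorbed into the constant. Iterating from scale $1$ down to scale $j$ gives the covering bound $N \le C(n)^{j}\,\gamma^{-kj}\cdot(\text{bad-scale correction})$; choosing $\gamma$ small enough that $C(n)\gamma^{\eta} \le 1$, i.e. so that the geometric growth $C(n)^j$ is beaten by a factor $\gamma^{-\eta j}$, converts this to $N \le C(n,\Lambda,\eta)\,(\gamma^j)^{-k-\eta/2}$ balls of radius $\gamma^j$. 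Summing the volumes $C(n)(\gamma^j)^n$ of these balls over the $\le C j^Q$ words, and again absorbing the polynomial factor $j^Q$ into another $\gamma^{-\eta j/2}$ by shrinking $\gamma$, yields $\mathrm{Vol}(B_{\gamma^j}(\cS^k_{\eta,\gamma^j}) \cap B_{1/2}(0)) \le C(n,\Lambda,\eta)(\gamma^j)^{n-k-\eta}$, as claimed.

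A few technical points need care. First, the almost-monotonicity and the uniform frequency bound $\bar N^u(x,r) \le C(n,\Lambda)$ for $x \in B_{1/2}(0)$ and admissible $r$ (Lemma \ref{lemma_strong2bar}) must be invoked at each stage so that Theorem \ref{th_N2hom} and Corollary \ref{cor_symsum} apply with constants independent of the point; this is why the hypothesis $\bar N^u(0,1) \le \Lambda$ suffices. Second, one must be careful that the center of a covering ball at scale $r_i$ need not itself lie in $\cS^k_{\eta,\gamma^j}$ — the standard fix is to always recenter on a point of $E$, and to note that $(k+1,\eta,s,x)$-symmetry at a point $x$ near $E$ propagates (with a slightly worse $\eta$ and a controlled change of $s$, using the doubling condition \eqref{eq_dou}) to nearby points of $E$, contradicting their membership in $\cS^k_{\eta,\gamma^j}$; the loss in $\eta$ and $\tau$ is harmless since these are fixed before $\gamma$. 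Third, the order of quantifiers must be arranged so that $\tau$ (the separation parameter feeding Corollary \ref{cor_symsum}) and then $\epsilon$ are chosen first, and $\gamma$ last, depending on $n,\eta,\Lambda$ only.

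The main obstacle I expect is the bookkeeping in the inductive covering at bad scales, i.e. making precise that the at-most-$Q$ ``lost'' scales really do contribute only a multiplicative constant rather than spoiling the exponent. This is exactly the delicate point in \cite{ChNa1,ChNa2}: one organizes the iteration not scale-by-scale but by grouping maximal runs of good scales separated by bad ones, applies the $k$-plane covering improvement across each good run (which compounds to $\gamma^{-k\cdot(\text{length})}$), and treats each bad scale as a single refinement step costing $\gamma^{-n}$. Since the total length of good runs is $j - |T| \le j$ and $|T| \le Q$, the product telescopes correctly, but writing this cleanly — and verifying that the recentering and symmetry-propagation lemmas compose without accumulating error over the $\le Q$ bad scales — is the part that requires genuine attention rather than routine estimation. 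Once that is in place, the reduction of Theorem \ref{th_main_proof} to the general $r$ case of Theorem \ref{t:quantstrat} is immediate by monotonicity of the strata in $r$, and part (2) has already been established in Proposition \ref{prop_qcrit}.
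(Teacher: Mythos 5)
Your proposal is correct and follows essentially the same route as the paper: the frequency decomposition into words recording good/bad scales, the $j^D$ bound on nonempty words via quantitative differentiation (monotonicity of $\bar N$ plus Theorem \ref{th_N2hom}), the $c_0(n)\gamma^{-k}$ refinement at good scales via cone-splitting (Corollary \ref{cor_symsum}) versus the trivial $\gamma^{-n}$ cost at the at most $D$ bad scales, and the final choice $\gamma=c_0^{-2/\eta}$ to absorb both the geometric constant and the polynomial factor $j^D$. The technical caveats you flag (uniform frequency bounds from Lemma \ref{lemma_strong2bar}, recentering on points of the stratum, quantifier ordering) are exactly the ones handled in the paper's Decomposition Lemma.
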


The scheme of the proof is the following: for some convenient $0<\gamma<1$ we prove
 that there exists a covering of $S^{k}_{\eta,\gamma^j}$ made of nonempty open sets in the collection 
$\{\cC^k_{\eta,\gamma^j}\}$. Each set $\cC^k_{\eta,\gamma^j}$ is the union of a controlled number of balls
 of radius $\gamma^j$. Using Remark \ref{qd} (Quantitative differentiation)
it will follow that the number of nonempty elements in each family has a
 bound 
of the form $j^D$, 
for some constant $D(n,\eta,\Lambda)>1$.
This will give the desired volume bound. In particular:

\begin{lemma}[Decomposition Lemma]\label{lemma_dec}
 There exists $c_0(n),c_1(n)>0$ and $D(n,\eta,\Lambda)>1$ such that for every $j\in \N$,
\begin{enumerate}
 \item $\cS^k_{\eta,\gamma^j}\cap B_{1/2}(0)$ is contained in the union of at most 
$j^D$ \textit{nonempty} open sets $C^k_{\eta,\gamma^j}$.
 \item Each $C^k_{\eta,\gamma^j}$ is the union of at most 
$(c_1\gamma ^{-n})^D (c_0\gamma^{-k})^{j-D}$ balls of radius $\gamma^j$.
\end{enumerate}
\end{lemma}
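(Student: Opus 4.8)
The plan is to build the covering by induction on scale, organizing the points of $\cS^k_{\eta,\gamma^j}\cap B_{1/2}(0)$ according to their "frequency pinching behavior" at the scales $\gamma^0,\gamma^1,\dots,\gamma^{j-1}$. First I would fix, using Theorem~\ref{th_N2hom} (with parameter $\eta$ and a suitable $\gamma$ chosen below), a threshold $\epsilon=\epsilon(n,\Lambda,\eta,\gamma)$ so that a frequency drop $\bar N(x,\gamma^i)-\bar N(x,\gamma^{i+1})<\epsilon$ forces $(0,\eta,\gamma^i,x)$-symmetry. Call scale $i$ "good" for $x$ if this inequality holds, and "bad" otherwise. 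Since $1\le\bar N(x,\cdot)\le\Lambda$ for $x$ near the origin (Lemma~\ref{lemma_strong2bar}), monotonicity shows each point has at most $Q=Q(n,\Lambda,\eta)=\lceil(\Lambda-1)/\epsilon\rceil$ bad scales. I would then define each admissible set $\cC^k_{\eta,\gamma^j}$ by prescribing which subset $S\subseteq\{0,\dots,j-1\}$ of scales is bad — only $|S|\le Q$ being allowed — so the number of such sets is at most $\sum_{q\le Q}\binom{j}{q}\le C(n,\Lambda,\eta)\,j^Q$, which is $\le j^D$ after enlarging $D$; this gives item (1).

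The core is item (2): the count of $\gamma^j$-balls needed to cover the points in $\cC^k_{\eta,\gamma^j}$ (those with bad-scale set exactly $S$). I would run a downward induction producing, for each $i=0,1,\dots,j$, a covering of this set by balls of radius $\gamma^i$, controlling how the number of balls multiplies as $i$ increases by $1$. The inductive step splits into two cases for a ball $B_{\gamma^i}(y)$ in the current cover. If $i$ is a \emph{bad} scale, I simply subdivide $B_{\gamma^i}(y)$ into at most $c_1(n)\gamma^{-n}$ balls of radius $\gamma^{i+1}$ — the crude bound. If $i$ is a \emph{good} scale, then every point $x$ of our set inside $B_{\gamma^i}(y)$ is $(0,\eta',\gamma^i,x)$-symmetric for a controlled $\eta'$; but no such $x$ lies in $\cS^k_{\eta,\gamma^j}$ with the relevant $(k+1,\eta,\gamma^i,x)$-symmetry, so by the contrapositive of Corollary~\ref{cor_symsum} (applied at scale $\gamma^i$ with parameters tuned to $\eta$), the set of such symmetry points inside $B_{\gamma^i}(y)$ must lie in a $\tau\gamma^i$-neighborhood of some $k$-dimensional affine subspace $V_{y,i}$. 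A $k$-plane's $\tau\gamma^i$-tube inside $B_{\gamma^i}(y)$ is covered by at most $c_0(n)\gamma^{-k}$ balls of radius $\gamma^{i+1}$, once $\gamma$ (hence $\tau$) is fixed appropriately. Multiplying over the $j$ steps, with at most $|S|\le D$ steps of the first type and at least $j-D$ of the second, yields the bound $(c_1\gamma^{-n})^D(c_0\gamma^{-k})^{j-D}$ in item (2).

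The main obstacle is the good-scale step: one must verify that being in $\cS^k_{\eta,\gamma^j}$ really does propagate a \emph{uniform} effective $(k+1)$-non-symmetry obstruction at \emph{each} individual good scale $\gamma^i$ with $i<j$, so that Corollary~\ref{cor_symsum} can be invoked with parameters independent of $i$ and $j$. This requires carefully chaining: $(0,\epsilon,\gamma^i,x)$-symmetry at good scales (from Theorem~\ref{th_N2hom}) plus the absence of $(k+1,\eta,\gamma^i,x)$-symmetry (from membership in $\cS^k_{\eta,\gamma^i}\supseteq\cS^k_{\eta,\gamma^j}$) must, via the contrapositive of the cone-splitting corollary, confine the bad points to a definite neighborhood of a $k$-plane — and the constant $\tau$ there must be chosen first, then $\gamma$ small enough that $\tau\gamma^i$-tubes are economically coverable, then $\epsilon$ small enough for both the quantitative-differentiation count $Q$ and the cone-splitting input to work. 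Getting this order of quantifiers right, and checking that the weak $W^{1,2}$/local $C^1$ compactness underlying all the contradiction arguments passes to the restricted sets, is where the real care is needed; the geometric covering counts for tubes around $k$-planes are then elementary.
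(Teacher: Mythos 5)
Your proposal is correct and follows essentially the same route as the paper: the paper also encodes each point by the tuple of scales at which it fails to be almost $0$-symmetric (its sets $E(\bar T^j)$ are exactly your bad-scale classes), bounds the number of nonempty classes by $j^D$ via quantitative differentiation of the monotone frequency, and in the inductive covering step uses the contrapositive of the cone-splitting Corollary \ref{cor_symsum} to trap the stratum in a $\tau\gamma^a$-tube about a $k$-plane at good scales and falls back on the crude $\gamma^{-n}$ count at the at most $D$ bad scales. The quantifier ordering you flag ($\tau$, then $\gamma$, then $\epsilon$) matches the paper's choices ($\tau=7^{-1}$, $\gamma=c_0^{-2/\eta}$, $\epsilon$ from Corollary \ref{cor_symsum}).
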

Once this Lemma is proved, Theorem \ref{th_main_proof} easily follows.
\begin{proof}[Proof of Theorem \ref{th_main_proof}]
 Let $\gamma=c_0^{-2/\eta}<1$. Since we have a covering of
 $\cS^k_{\eta,\gamma^j}\cap B_{1/2}(0)$ by balls of radius $\gamma^j$, 
it is easy to get a covering of $B_{\gamma^j}\ton{\cS^k_{\eta,\gamma^j}}\cap B_{1}(0)$. 
In fact it is sufficient to double the radius of the original balls. Now it is evident that
\begin{gather}
 \Vol\qua{B_{\gamma^j}\ton{\cS^k_{\eta,\gamma^j}}\cap B_{1/2}(0)} 
\leq j^D \ton{(c_1\gamma^{-n})^D (c_0\gamma^{-k})^{j-D}} \omega_n 2^n \ton{\gamma^j}^n\, ,
\end{gather}
where $\omega_n$ is the volume of the $n$-dimensional unit ball. By plugging in the simple rough estimates
\begin{gather}
 j^D \leq c(n,\Lambda,\eta)\ton{\gamma^j}^{-\eta/2}\, ,\\
\notag (c_1\gamma^{-n})^D(c_0\gamma^{-k})^{-D}\leq c(n,\Lambda,\eta)\, ,
\end{gather}
and using the definition of $\gamma$, we obtain the desired result.
\end{proof}

\paragraph{Proof of the Decomposition Lemma}
Now we turn to the proof of the Decomposition Lemma. In order to do this, we define a new quantity 
which measures the non-symmetry of $u$ at a certain scale.
\begin{definition}
 Given $u$ as in Theorem \ref{th_main_proof}, $x\in B_{1}(0)$ and $0<r<1$, define
\begin{gather}
 \cN(u,x,r) =\inf\{\alpha\geq 0 \ \ s.t. \ \ u \text{ is } (0,\alpha,r,x)\text{-symmetric}\}\, .
\end{gather}
\end{definition}
Given $\epsilon>0$, we divide the set $B_{1/2}(0)$ into two subsets according to the behaviour 
of the points with respect to their quantitative symmetry. 
\begin{align}
 H_{r,\epsilon}(u)=\{x\in B_{1/2}(0) \ s.t. \ \cN(u,x,r)\geq \epsilon\}\, , \\
\notag L_{r,\epsilon}(u)=\{x\in B_{1/2}(0) \ s.t. \ \cN(u,x,r)< \epsilon\}\, .\
\end{align}
Next, to each point $x\in B_{1/2}(0)$ we associate a $j$-tuple $T^j(x)$ of numbers $\{0,1\}$
 in such a way that the $i$-th entry of $T^j$ is $1$ if $x\in H_{\gamma^i, \epsilon}(u)$, and zero otherwise.
 Then, for each fixed $j$-tuple $\bar T^j$, set:
\begin{gather}
 E(\bar T^j) = \{x\in B_{1/2}(0) \ \ s.t. \ \ T^j(x)=\bar T^j\}\, .
\end{gather}
Also, we denote by $T^{ j-1}$, the $(j - 1)$-tuple obtained from $T^ j$ by dropping the last entry,
 and define $\abs{T^j}$ to be the number of entries that are equal to $1$ the $j$-tuple $T^j$.

We will build the families $\{C^k_{\eta,\gamma^j}\}$ by induction on $j$ in the following way.
For $a=0$, $\{C^k_{\eta,\gamma^0}\}$ consists of the single ball $B_{1}(0)$.

\paragraph{Induction step}
For fixed $a\leq j$, consider all the $2^a$ $a$-tuples $\bar T^a$. Label the sets in the family
 $\{C^k_{\eta,\gamma^a}\}$ by all the possible $\bar T^a$. We will build $C^k_{\eta,\gamma^a}(\bar T^a)$ 
inductively as follows.  For each ball $B_{\gamma^{a-1}}(y)$ in $\{C^k_{\eta,\gamma^{a-1}}(\bar T^{a-1})\}$ 
take a minimal covering of $B_{\gamma^{a-1}}(y)\cap \cS^{k}_{\eta,\gamma^j} \cap E(\bar T^a)$ by balls 
of radius $\gamma^a$ centered at points in $B_{\gamma^{a-1}}(x)\cap \cS^k_{\eta,\gamma^j}\cap E(\bar T^a)$.
 Note that it is possible that for some $a$-tuple $\bar T^a$, the set $E(\bar T^a)$ is empty, and in this
 case $\{C^k_{\eta,\gamma^{a}}(\bar T^{a})\}$ is the empty set.

Now we need to prove that the minimal covering satisfies points 1 and 2 in Lemma \ref{lemma_dec}.
\begin{remark}
The value of $\epsilon>0$ will be chosen according to Lemma \ref{lemma_cov}. For the moment,
we take it to be an arbitrary fixed small quantity.
\end{remark}

\paragraph{Point 1 in Lemma}
As we will see below, we can use the monotonicity of $\bar N$ to prove that for every 
$\bar T^j$, $E(\bar T^j)$ is empty if $\abs{\bar T^j}\geq D$. Since for every $j$ there 
are at most $\binom j D\leq j^D$ choices of $j$-tuples with $\abs{\bar T^j}\leq D$,
 the first point will be proved.

\begin{lemma}\label{lemma_K}
 There exists  $D=D(\epsilon,\gamma,\Lambda,n)$ such that $E(\bar T^j)$ is empty if 
$\abs{\bar T^j}\geq D$. 
\end{lemma}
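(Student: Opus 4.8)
The plan is to exploit the monotonicity of the normalized frequency $\bar N$ together with the quantitative rigidity statement of Theorem~\ref{th_N2hom}. Recall that by Lemma~\ref{lemma_strong2bar}, for $x\in B_{1/2}(0)$ and $r$ up to a definite scale we have a uniform bound $\bar N(x,r)\le \Lambda' = \Lambda'(n,\Lambda)$, and we always have the lower bound $\bar N(x,0)\ge 1$. Hence for each $x\in B_{1/2}(0)$ the total variation $\bar N(x,r_0)-\bar N(x,0)$ over a fixed interval $(0,r_0)$ is at most $\Lambda'-1$, a quantity depending only on $n$ and $\Lambda$.

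The key observation is that membership of $x$ in $H_{\gamma^i,\epsilon}(u)$ forces a definite drop in the frequency across the scales $\gamma^i$ and $\gamma^{i+1}$ (or across $\gamma^i$ and $\gamma\gamma^i$). Indeed, by the contrapositive of Theorem~\ref{th_N2hom} (applied at scale $r=\gamma^i$, using the scale-invariant remark following that theorem), there is $\epsilon_0 = \epsilon_0(n,\Lambda,\gamma)>0$ such that if $\bar N(x,\gamma^i)-\bar N(x,\gamma^{i+1})<\epsilon_0$ then $u$ is $(0,\eta,\gamma^i,x)$-symmetric, i.e. $\cN(u,x,\gamma^i)<\eta$. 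So, choosing $\epsilon \le \eta$ in the definition of $H_{\gamma^i,\epsilon}(u)$, we see that $x\in H_{\gamma^i,\epsilon}(u)$ implies $\bar N(x,\gamma^i)-\bar N(x,\gamma^{i+1})\ge\epsilon_0$.

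Now suppose $x\in E(\bar T^j)$ with $\abs{\bar T^j}\ge D$. Then there are at least $D$ indices $i$ with $x\in H_{\gamma^i,\epsilon}(u)$, and hence at least $D$ scales across which the frequency drops by at least $\epsilon_0$. Since the frequency is monotone nondecreasing in $r$ and these intervals $(\gamma^{i+1},\gamma^i)$ are disjoint, telescoping gives
\begin{gather}
\Lambda'-1 \;\ge\; \bar N(x,r_0)-\bar N(x,0) \;\ge\; \sum_{i\,:\,x\in H_{\gamma^i,\epsilon}(u)} \ton{\bar N(x,\gamma^i)-\bar N(x,\gamma^{i+1})} \;\ge\; D\,\epsilon_0\, .
\end{gather}
Therefore $D \le (\Lambda'-1)/\epsilon_0 =: D(\epsilon,\gamma,\Lambda,n)$, and for any $D$ strictly larger than this bound the set $E(\bar T^j)$ must be empty whenever $\abs{\bar T^j}\ge D$. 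This proves the lemma.

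The only genuinely delicate point — and the one I would be most careful about — is bookkeeping the scales correctly: one must make sure the scale-invariant form of Theorem~\ref{th_N2hom} is applied with the right interval (i.e.\ that $(0,\eta,\gamma^i,x)$-symmetry is what comes out, matching the definition of $\cN$ and of $H_{\gamma^i,\epsilon}$), and that the uniform frequency bound of Lemma~\ref{lemma_strong2bar} is valid at all scales $\gamma^i$ with $\gamma^i \le r_0$ for $x\in B_{1/2}(0)$, which is where the restriction to $B_{1/2}(0)$ and the choice of $r_0$ (say $r_0 = 1/4$) enter. Everything else is a one-line telescoping argument; no compactness is needed here since the quantitative rigidity has already been established in Theorem~\ref{th_N2hom}.
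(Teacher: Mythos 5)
Your argument is essentially the paper's own proof: monotonicity of $\bar N$ together with the uniform bound of Lemma \ref{lemma_strong2bar} and $\bar N(x,0)\geq 1$ bounds the total frequency drop over the disjoint intervals $(\gamma^{i+1},\gamma^i)$, Theorem \ref{th_N2hom} (in its scale-invariant form) converts membership in $H_{\gamma^i,\epsilon}(u)$ into a definite drop $\geq \delta$, and pigeonholing gives $D\leq (C(n,\Lambda)-1)/\delta$, exactly as in the paper. One small correction: the relation between the parameters is reversed — you must apply Theorem \ref{th_N2hom} with symmetry parameter at most $\epsilon$ (so the drop threshold depends on $\epsilon$, which is consistent with $D=D(\epsilon,\gamma,\Lambda,n)$), since being $(0,\eta,\gamma^i,x)$-symmetric with $\epsilon\leq\eta$ does not contradict $\cN(u,x,\gamma^i)\geq\epsilon$.
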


In what  follows, we will fix $\epsilon$ as a function of $\eta,\Lambda,n$. Thus, $D$ will 
actually depend only on these three variables.
\begin{proof}
 Recall that $\bar N(x,r)$ is monotone  nondecreasing with respect to $r$, and, by Lemma 
\ref{lemma_strong2bar}, $\bar N(x,1/3)$ is bounded above by a function $C(n,\Lambda)$. For $s<r$,
we set
\begin{gather}
 \cW_{s,r}(x)=\bar N(x,r)-\bar N(x,s)\geq 0\, .
\end{gather}
If $(s_i,r_i)$ are \textit{disjoint} intervals with $\max\{r_i\}\leq 1/3$, then by monotonicity of $\bar N$:
\begin{gather}\label{eq_sum}
 \sum_i \cW_{s_i,r_i}(x)\leq \bar N (x,1/3) -\bar N (x,0) \leq C(n,\Lambda) -1\, .
\end{gather}

Let $\bar i$ be such that $\gamma^{\bar i} \leq 1/3$, and consider intervals of the form 
$(\gamma^{i+1},  \gamma^{i})$ for $i=\bar i,\bar i+1,...\infty$. By Theorem \ref{th_N2hom} 
and Lemma \ref{lemma_strong2bar}, there exists a $0<\delta=\delta(\epsilon,\gamma,\Lambda,n)$
 independent of $x$ such that
\begin{gather}
 \cW_{\gamma^{i+1},\gamma^{i}}(x)\leq \delta \implies u \text{ is }(0,\epsilon,\gamma^{i},x)\text{-symmetric}\, .
\end{gather}
In particular $x\in L_{ \gamma^{i},\epsilon}$, so that, if $i\leq j$, the $i$-th entry of $T^j$ is necessarily zero. 
By equation \eqref{eq_sum}, there can be only a finite number of $i$'s such that 
$\cW_{\gamma^{i+1}, \gamma^i}(x)>\delta$, and this number $D$ is bounded by:
\begin{gather}\label{eq_estK}
 D\leq \frac{C(n,\Lambda)-1}{\delta(\epsilon,\gamma,\Lambda,n)}\, .
\end{gather}
This completes the proof.
\end{proof}

\paragraph{Point 2 in Lemma}
The proof of the second point in Lemma \ref{lemma_dec} is mainly based on
 Corollary \ref{cor_symsum}. In particular, for fixed $k$ and $\eta$ in the 
definition of $\cS^k_{\eta,\gamma^j}$, choose $\epsilon$ in such a way
 that Corollary \ref{cor_symsum} can be applied with $r=\gamma^{-1}$ and $\tau = 7 ^{-1}$. 
Then we can restate the lemma as follows:
\begin{lemma}\label{lemma_cov}
 Let $\bar T^j_a =0$. Then the set $A=S^{k}_{\eta,\gamma^j}\cap B_{\gamma^{a-1}}(x)\cap E(\bar T^j)$ 
can be covered by $c_0(n)\gamma^{-k}$ balls centered in $A$ of radius $\gamma^{a}$.
\end{lemma}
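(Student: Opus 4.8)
The plan is to prove Lemma \ref{lemma_cov} as a consequence of the quantitative cone-splitting principle in Corollary \ref{cor_symsum}. The key observation is that every point of $A = \cS^k_{\eta,\gamma^j}\cap B_{\gamma^{a-1}}(x)\cap E(\bar T^j)$ lies in $L_{\gamma^a,\epsilon}(u)$, since by hypothesis the $a$-th entry $\bar T^j_a = 0$, which means by definition of $E(\bar T^j)$ that $\cN(u,y,\gamma^a)<\epsilon$ for every $y\in A$; equivalently, $u$ is $(0,\epsilon,\gamma^a,y)$-symmetric at every such point. This is exactly hypothesis (1) of Corollary \ref{cor_symsum} (after rescaling so that the relevant ball has radius $\gamma^{-1}$ times the smaller one, which is why $\epsilon$ was chosen with $r = \gamma^{-1}$).

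First I would set up the dichotomy that drives the covering. Fix a maximal $\gamma^a$-separated subset $\{y_i\}\subseteq A$; then the balls $B_{\gamma^a}(y_i)$ cover $A$, and it suffices to bound their number by $c_0(n)\gamma^{-k}$. Suppose for contradiction this fails. Consider the affine span of the points $\{y_i\}$; if they were all contained in (a neighborhood of) some $k$-dimensional affine subspace $V$, then a standard packing estimate in $B_{\gamma^{a-1}}(x)$ — a ball of radius comparable to $\gamma^{a-1}$ — would give at most $C(n)\gamma^{-k}$ such $\gamma^a$-separated points, which is the desired bound with an appropriate $c_0(n)$. So the negation forces that the $y_i$ are \emph{not} contained in $B_\tau(V)$ (with $\tau = 7^{-1}$, after rescaling the picture to unit scale) for any $k$-dimensional $V$: for every such $V$ there is some $y_i\in B_{\gamma^{a-1}}(x)\setminus B_{\tau\gamma^{a-1}}(V)$. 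Applying the rescaled Corollary \ref{cor_symsum} centered at the base point (with hypothesis (1) coming from any point of $A$ being $(0,\epsilon,\gamma^a,\cdot)$-symmetric, and hypothesis (2) from the spread of the $y_i$'s, all of which are $(0,\epsilon,\gamma^a,y_i)$-symmetric), we conclude that $u$ is $(k+1,\eta,\gamma^{a-1},z)$-symmetric at the center $z$, hence (by monotonicity of the notion of almost-symmetry in the radius, via the frequency bound) $u$ is $(k+1,\eta,\gamma^j,z)$-symmetric. But $z\in\cS^k_{\eta,\gamma^j}$ means by definition that $u$ is \emph{not} $(k+1,\eta,s,z)$-symmetric for any $s\geq\gamma^j$ — contradiction.

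The main subtlety — and the step I expect to be the genuine obstacle — is making the cone-splitting argument yield symmetry at a \emph{single} fixed center that lies in $\cS^k_{\eta,\gamma^j}$, rather than just somewhere in the ball. The clean way to handle this is to center Corollary \ref{cor_symsum} at one of the $y_i\in A$ itself (which is legitimate: that point is $(0,\epsilon,\gamma^a,y_i)$-symmetric, giving hypothesis (1)), and use the remaining $y_{i'}$'s, appropriately rescaled into $B_1(0)$ around $y_i$, to supply the transversal $0$-symmetric points required for hypothesis (2). One has to check the bookkeeping of scales carefully: the points $y_{i'}$ live in $B_{\gamma^{a-1}}(x)$ which, viewed from $y_i$, is contained in $B_{2\gamma^{a-1}}(y_i)$, so after rescaling by $\gamma^{a-1}$ they sit in $B_2(0)$ and are $(0,\epsilon,\gamma,\cdot)$-symmetric — matching the $r = \gamma^{-1}$, $\tau = 7^{-1}$ choice in the statement up to harmless constants absorbed into $c_0(n)$. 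The other routine point is Lemma \ref{lemma_strong2bar}, which guarantees the frequency bound $\bar N(y_i,\cdot)\leq C(n,\Lambda)$ needed to invoke Corollary \ref{cor_symsum} at the shifted center; this is exactly why that lemma was proved first. Once the contradiction is reached, the packing bound gives the constant $c_0(n)$, completing the proof.
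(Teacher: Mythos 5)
Your overall strategy is the paper's: every point of $A$ is $(0,\epsilon,\gamma^a,\cdot)$-symmetric because $\bar T^j_a=0$, and if the points of $A$ were spread transversally to every $k$-dimensional affine subspace, then cone-splitting (Corollary \ref{cor_symsum} together with the frequency bound from Lemma \ref{lemma_strong2bar}) centered at a point of $A$ would force $(k+1,\eta,\gamma^{a-1},\cdot)$-symmetry there, contradicting membership in $\cS^k_{\eta,\gamma^j}$ because $\gamma^{a-1}\geq\gamma^j$. (Incidentally, you do not need the step passing from scale $\gamma^{a-1}$ down to scale $\gamma^j$ by ``monotonicity of almost-symmetry in the radius'' --- which is not a legitimate implication as stated; taking $s=\gamma^{a-1}\geq\gamma^j$ in the definition of $\cS^k_{\eta,\gamma^j}$ already gives the contradiction directly.)

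The genuine gap is the scale at which you measure transversality, and it destroys the count. You put the tube at thickness $\tau\gamma^{a-1}=7^{-1}\gamma^{a-1}$ (distance $7^{-1}$ from $V$ after rescaling by $\gamma^{a-1}$) and then assert that the $\gamma^a$-separated points of $A$ inside $B_{\tau\gamma^{a-1}}(V)\cap B_{\gamma^{a-1}}(x)$ number at most $C(n)\gamma^{-k}$. That packing estimate is false: a tube whose cross-sectional radius is comparable to $\gamma^{a-1}$, intersected with a ball of radius $\gamma^{a-1}$, has volume on the order of $(\gamma^{a-1})^n$, so it can contain on the order of $\gamma^{-n}$ disjoint balls of radius $\gamma^a/2$; you only get $C(n)\gamma^{-k}$ when the tube thickness is comparable to the separation $\gamma^a$. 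The exponent $k$ versus $n$, with a constant $c_0(n)$ \emph{independent of} $\gamma$, is the entire content of the lemma: in the proof of Theorem \ref{th_main_proof} one sets $\gamma=c_0^{-2/\eta}$, and a per-step count of order $\gamma^{-n}$ would yield ${\rm Vol}\ton{B_{\gamma^j}(\cS^k_{\eta,\gamma^j})\cap B_{1/2}}\leq C j^D$, i.e.\ no decay at all. The paper avoids this by proving the stronger containment $A\subseteq B_{7^{-1}\gamma^a}(V^k)\cap B_{\gamma^{a-1}}(x)$, with transversality measured at the \emph{small} scale: if for every $k$-plane $V$ through a point $y\in A$ some point of $A$ escaped even the thin tube $B_{7^{-1}\gamma^a}(V)$, cone-splitting gives the contradiction (the parameter $\epsilon$ then also depends on $\gamma$, which is harmless since $\gamma=\gamma(n,\eta)$ is fixed before $\epsilon$); afterwards $V^k\cap B_{\gamma^{a-1}}(x)$ is covered by $c_0(n)\gamma^{-k}$ balls of radius $\tfrac{6}{7}\gamma^a$, which after fattening to radius $\gamma^a$ cover all of $A$. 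So your argument can be repaired, but only by tightening the tube to thickness of order $\gamma^a$ and invoking the splitting corollary with transversality parameter $7^{-1}\gamma$ (in your rescaled picture), not $7^{-1}$.
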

\begin{proof}
 First of all, note that since $\bar T^j_a =0$, all the points in $E(\bar T^j)$ are in $L_{\epsilon, \gamma^a}(u)$.

The set $A$ is contained in $B_{7^{-1}\gamma^a}(V^k)\cap B_{\gamma^{a-1}}(x)$ for some 
$k$-dimensional subspace $V^k$. Indeed, if there were a point $x\in A$, such that  
$x\not\in B_{7^{-1} \gamma^a}(V^k)\cap B_{\gamma^{a-1}}(x)$, then by Corollary
 \ref{cor_symsum} and Lemma \ref{lemma_strong2bar}, $u$ would be $(k+1,\eta,\gamma^{a-1},x)$-symmetric. 
This contradicts $x\in \cS^k_{\eta,\gamma^j}$. By standard geometry, it follows that $V^k \cap B_{\gamma^{a-1}}(x)$ 
can be covered by $c_0(n)\gamma^{-k}$ balls of radius $\frac 6 7 \gamma^a$, and by the triangle inequality
 it is evident that the same balls with radius $\gamma^a$ cover the whole set $A$.
\end{proof}

If instead $\bar T^j_a =1$, then without any effort we can say that 
$A=S^{k}_{\eta,\gamma^j}\cap B_{a-1}(x)\cap E(\bar T^j)$ can be covered by 
$c_0(n)\gamma^{-n}$ balls of radius $\gamma^a$. Now by a simple induction argument the proof is complete.
\begin{lemma}
 Each (nonempty) $C^k_{\eta,\gamma^j}$ is the union of at most 
$(c_1\gamma ^{-n})^D \cdot (c_0\gamma^{-k})^{j-D}$ balls of radius $\gamma^j$.
\end{lemma}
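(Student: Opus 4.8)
The plan is to establish the bound by induction on $j$, tracking at each scale $\gamma^a$ how the covering of a set $C^k_{\eta,\gamma^a}(\bar T^a)$ is refined into coverings at scale $\gamma^{a+1}$. The key quantity is the number of balls of radius $\gamma^a$ needed to cover each piece, and the induction will multiply a factor of $c_0(n)\gamma^{-k}$ or $c_1(n)\gamma^{-n}$ at each step depending on whether the relevant entry $\bar T^{j}_a$ of the tuple is $0$ or $1$.

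First I would set up the base case: for $a=0$, the family $\{C^k_{\eta,\gamma^0}\}$ consists of the single ball $B_1(0)$, which is trivially covered by one ball of radius $\gamma^0=1$. For the inductive step, suppose each $C^k_{\eta,\gamma^{a-1}}(\bar T^{a-1})$ is the union of at most $N_{a-1}$ balls $B_{\gamma^{a-1}}(y)$ centered in $\cS^k_{\eta,\gamma^j}$. By construction, $C^k_{\eta,\gamma^a}(\bar T^a)$ is obtained by, for each such ball $B_{\gamma^{a-1}}(y)$, taking a minimal covering of $B_{\gamma^{a-1}}(y)\cap \cS^k_{\eta,\gamma^j}\cap E(\bar T^a)$ by balls of radius $\gamma^a$. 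Now I invoke the two cases already worked out: if $\bar T^j_a=0$, Lemma \ref{lemma_cov} says this piece is covered by at most $c_0(n)\gamma^{-k}$ balls of radius $\gamma^a$; if $\bar T^j_a=1$, the trivial volume estimate gives at most $c_1(n)\gamma^{-n}$ balls. Hence $N_a \leq N_{a-1}\cdot c_0(n)\gamma^{-k}$ in the first case and $N_a\leq N_{a-1}\cdot c_1(n)\gamma^{-n}$ in the second.

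Iterating this recursion from $a=0$ to $a=j$, I get $N_j \leq \prod_{a=1}^{j} (\text{factor at step } a)$, where the factor is $c_1\gamma^{-n}$ on those steps where $\bar T^j_a=1$ and $c_0\gamma^{-k}$ on those where $\bar T^j_a=0$. The crucial observation — supplied by Lemma \ref{lemma_K} — is that the number of indices $a$ with $\bar T^j_a=1$ is at most $D=D(n,\eta,\Lambda)$ (otherwise $E(\bar T^j)$ is empty, so the set $C^k_{\eta,\gamma^j}(\bar T^j)$ doesn't arise). Therefore at most $D$ of the factors are $c_1\gamma^{-n}$ and at least $j-D$ of them are $c_0\gamma^{-k}$, giving
\begin{gather}
N_j \leq (c_1\gamma^{-n})^D (c_0\gamma^{-k})^{j-D}\, ,
\end{gather}
which is exactly the claimed bound. (One should note that if fewer than $D$ entries equal $1$, one can only do better, and the stated estimate still holds after possibly relabeling/absorbing constants; alternatively one pads the count of ``bad'' scales up to $D$, which only increases the bound.)

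The main obstacle is bookkeeping rather than mathematical depth: one must be careful that the minimal coverings at each stage are genuinely nested inside the parent balls and centered at points of $\cS^k_{\eta,\gamma^j}$, so that Lemma \ref{lemma_cov} applies with the correct $k$-plane $V^k$ localized to each parent ball $B_{\gamma^{a-1}}(y)$; and one must confirm that the choice of $\epsilon=\epsilon(n,\eta,\Lambda)$ made so that Corollary \ref{cor_symsum} applies (with $r=\gamma^{-1}$, $\tau=7^{-1}$) is simultaneously compatible with the choice of $\delta$, hence $D$, in Lemma \ref{lemma_K}. Since $\epsilon$ is fixed purely in terms of $n,\eta,\Lambda$ and $D$ is then determined by \eqref{eq_estK}, there is no circularity, and the induction closes cleanly.
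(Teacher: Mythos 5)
Your induction is exactly the paper's argument: at each scale multiply the ball count by $c_0\gamma^{-k}$ when $\bar T^j_a=0$ (via Lemma \ref{lemma_cov}) or by $c_1\gamma^{-n}$ when $\bar T^j_a=1$, and invoke Lemma \ref{lemma_K} to bound the number of bad scales by $D$. The proposal is correct and matches the paper's proof, just with the bookkeeping spelled out more explicitly.
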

\begin{proof}
 Fix a sequence $\bar T^j$ and consider the set $C^k_{\eta,\gamma^j}(\bar T^j)$. 
By Lemma \ref{lemma_K}, we can assume that $\abs {\bar T^j}\leq D$, otherwise
 there is nothing to prove since $C^k_{\eta,\gamma^j}(\bar T^j)$ would be empty.

Consider that for each step $a$, in order to get a (minimal) covering of 
$B_{\gamma^{a-1}}(x)\cap S^{k}_{\eta,\gamma^i}\cap E(\bar T^j) $ for 
$B_{\gamma^{a-1}}(x)\in C^k_{\eta,\gamma^{a-1}}(\bar T^j)$, we require at most
 $(c_0 \gamma^{-k})$ balls of radius $\gamma^{a}$ if $\bar T^j_a=0$ or $(c_0\gamma^{n})$ 
otherwise. Since the latter situation can occur at most $D$ times, the proof is complete.
\end{proof}

\subsection{Minkowski Type Estimates on the Critical Set}

Apart from the volume estimate, Theorem \ref{t:quantstrat} has a useful corollary for 
measuring the size of the critical set.  Indeed, by Proposition \ref{prop_qcrit},
 the critical set of $u$ is contained in $\cS^{n-2}_{\epsilon,r}$, thus we have proved 
Theorem \ref{t:crit_lip} for harmonic functions:

\begin{corollary}\label{cor_main}
 Let $u:B_1(0)\to \R$ be a harmonic function with $\bar N^u(0,1)\leq \Lambda$. 
Then, for every $\eta>0$,
\begin{align}\label{eq_a}
{\rm Vol}(B_r(\Cr_r(u))\cap B_{1/2}(0))\leq C(n,\Lambda,\eta)r^{2-\eta}\, .
\end{align}
\end{corollary}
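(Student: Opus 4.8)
The plan is to deduce this directly from the already-proved Minkowski estimate on the quantitative stratum (Theorem \ref{th_main_proof}/Theorem \ref{t:quantstrat}) together with Proposition \ref{prop_qcrit}. The point of Proposition \ref{prop_qcrit} is that there is a threshold $\eta = \eta(n,\Lambda) > 0$ with the property that if $u$ is $(n-1,\eta,s,x)$-symmetric for some scale $s$, then $r_x \geq s$. Contrapositively, if $r_x \leq r$ — i.e.\ if $x \in \Cr_r(u)$ — then $u$ fails to be $(n-1,\eta,s,x)$-symmetric for every $s > r$, and by monotonicity/definition also for $s \geq r$ after a harmless adjustment of constants. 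Hence $\Cr_r(u) \subseteq \cS^{n-2}_{\eta, r}$ for this fixed $\eta$, and in fact $B_r(\Cr_r(u)) \cap B_{1/2}(0) \subseteq B_r(\cS^{n-2}_{\eta,r}) \cap B_{1/2}(0)$ up to enlarging the reference ball slightly.

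First I would make the inclusion $\Cr_r(u) \subseteq \cS^{n-2}_{\eta,r}$ precise, being careful about the distinction between ``for all $s > r$'' and ``for all $s \geq r$'': since $r_x \leq r$ means no normalized linear $L$ approximates $T_{x,s}u$ in $C^1(B_{1/2})$ to within $\beta(n)/2$ for any $s \leq r_x$, and since we want non-symmetry at all scales $\geq r$, I would simply apply Proposition \ref{prop_qcrit} at scale $s = r$ itself together with the fact that $r_x \leq r$ forbids the relevant approximation at that scale, then invoke the monotone/rescaled version (the remark after Theorem \ref{th_N2hom} and the scale-invariance noted repeatedly) to conclude non-$(n-1,\eta,s,x)$-symmetry for every $s \geq r$. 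This gives $x \in \cS^{n-2}_{\eta,r}$.

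Next I would apply Theorem \ref{th_main_proof} with $k = n-2$ and this particular $\eta$: for the associated $\gamma = \gamma(n,\eta,\Lambda)$ and every $j \in \N$,
\begin{align}
{\rm Vol}\ton{B_{\gamma^j}(\cS^{n-2}_{\eta,\gamma^j}) \cap B_{1/2}(0)} \leq C(n,\Lambda,\eta)\, \ton{\gamma^j}^{2-\eta}\, .
\end{align}
Combined with the inclusion above, this yields the estimate \eqref{eq_a} along the sequence $r = \gamma^j$. To pass from the discrete scales $\gamma^j$ to arbitrary $r \in (0,1)$, given $r$ I would pick $j$ with $\gamma^{j+1} < r \leq \gamma^j$; then $\Cr_r(u) \subseteq \Cr_{\gamma^j}(u) \subseteq \cS^{n-2}_{\eta,\gamma^j}$ and $B_r(\cdot) \subseteq B_{\gamma^j}(\cdot)$, so the volume at scale $r$ is bounded by $C(\gamma^j)^{2-\eta} \leq C \gamma^{-(2-\eta)} r^{2-\eta}$, and $\gamma^{-(2-\eta)}$ is absorbed into the constant. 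Finally I would remark that the exponent $2-\eta$ is what is wanted since $\eta > 0$ is arbitrary (one must run the argument with, say, $\eta$ replaced by $\min\{\eta, \eta(n,\Lambda)\}$ so that both the stratum-estimate exponent and the Proposition \ref{prop_qcrit} threshold are respected, which only improves the bound).

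The only genuinely delicate point — and thus the main obstacle — is the bookkeeping around the threshold $\eta(n,\Lambda)$ from Proposition \ref{prop_qcrit}: the statement of Corollary \ref{cor_main} quantifies over \emph{all} $\eta > 0$, so one must make sure that for small prescribed $\eta$ the inclusion $\Cr_r(u) \subseteq \cS^{n-2}_{\eta,r}$ still holds, which it does because $\cS^{n-2}_{\eta,r}$ is monotone increasing as $\eta$ decreases, so the inclusion for the fixed threshold value implies it for all smaller $\eta$ as well. Everything else is a routine combination of the inclusion, the already-established volume bound, and the discrete-to-continuous scale comparison.
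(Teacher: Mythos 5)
Your argument is essentially the paper's own proof: Proposition \ref{prop_qcrit} gives the inclusion of $\Cr_r(u)$ (indeed of a tubular neighborhood of it) into $\cS^{n-2}_{\eta,r}$ for $\eta$ below the threshold $\eta(n,\Lambda)$, the volume bound then follows from Theorem \ref{t:quantstrat}/\ref{th_main_proof}, and the quantifier over all $\eta>0$ is handled exactly as you do, by running the argument at $\min\{\eta,\eta(n,\Lambda)\}$ and noting the bound only improves. Your extra care about the borderline scale $s=r$ and the passage from the discrete scales $\gamma^j$ to arbitrary $r$ is fine and matches what the paper leaves implicit.
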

\begin{proof}
By Proposition \ref{prop_qcrit}, for $\eta>0$ small enough, we have the inclusion
\begin{gather}
 B_{r/2}(\Cr_r(u)) \subseteq \cS^{n-2}_{\eta,r}\, .
\end{gather}
Using Theorem \ref{t:quantstrat}, we obtain the desired volume estimate 
for $\eta$ sufficiently small. However, since
\begin{gather}
 \Vol(B_r(\Cr_r(u))\cap B_{1/2}(0))\leq \Vol(B_{1/2}(0))\, ,
\end{gather}
it is evident that if \eqref{eq_a} holds for some $\eta$, then a similar statement holds also for any $\eta'\geq \eta$.
\end{proof}

\begin{remark}
 As already mentioned in the introduction, this volume estimate on the critical set and
 its tubular neighborhoods immediately implies that $\dim_{Mink}(\Cr(u))\leq n-2$. This result is clearly optimal.
\end{remark}

\subsection[The Uniform (n-2)-Hausdorff Bound for the Critical Set]{The Uniform 
$(n-2)$-Hausdorff
 Bound for the Critical Set}

By combining the results of the previous sections with an $\epsilon$-regularity theorem from \cite{hanhardtlin},
 in this subsection we give a new proof of an effective uniform bound on the $(n-2)$-dimensional
Hausdorff measure of $\Cr(u)$. The bound will not depend on $u$ itself, but only on the normalized 
frequency $\bar N^u (0,1)$.  Specifically, the proof will be obtained by combining the 
$(n-3+\eta)$-Minkowski type estimates available for $\cS^{n-3}_{\eta,r}$ with the following $\epsilon$-regularity lemma.  
The lemma states that if a harmonic function $u$ is sufficiently close to a homogeneous harmonic
 polynomial of only $2$ variables, then the whole critical set of $u$ has a definite upper bound
on its $(n-2)$-dimensional Hausdorff measure.

As noted in the introduction, these results also follow from an 
adaptation of the techniques used in \cite{HLrank}

\begin{lemma}\label{lemma_n-2Ha}\cite[Lemma 3.2]{hanhardtlin}
Let $P$ be a homogeneous harmonic polynomial with exactly $n-2$ symmetries in $\R^n$. 
Then there exist positive constants $\epsilon$ and $\bar r$ depending on $P$, such that for any 
$u \in C^{2d^2}(B_1(0))$, if
\begin{gather}
 \norm{u-P}_{C^{2d^2}(B_1)}<\epsilon\, ,
\end{gather}
then for all $r\leq \bar r$:
\begin{gather}
 H^{n-2}(\nabla u^{-1}(0)\cap B_r(0))\leq c(n)(d-1)^2 r^{n-2}\, .
\end{gather}
\end{lemma}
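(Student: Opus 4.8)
The plan is to reduce the statement to a quantitative, uniform-in-$r$ estimate for the critical set of the model polynomial $P$ itself, and then transfer it to $u$ by the $C^{2d^2}$-closeness hypothesis together with an implicit-function / normal-form argument near the noncritical part of the zero set of $\nabla P$. Since $P$ is a homogeneous harmonic polynomial with exactly $n-2$ symmetries, after a linear change of coordinates we may write $P = P(x_1,x_2)$, a homogeneous harmonic polynomial in two variables of degree $d$; such a polynomial is, up to rotation and scaling, $\mathrm{Re}(x_1+ix_2)^d$, whose gradient vanishes only at the origin. Hence $\nabla P^{-1}(0) = \{x_1=x_2=0\} = \mathbb{R}^{n-2}$ as a set (with multiplicity/order information encoded by $d-1$), and $|\nabla P|(x) = c\,(x_1^2+x_2^2)^{(d-1)/2}$ is comparable to $\mathrm{dist}(x, \mathbb{R}^{n-2})^{d-1}$. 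The first step is to record this structure precisely, including a lower bound $|\nabla P|(x) \geq c(P)\,\mathrm{dist}(x,\{x_1=x_2=0\})^{d-1}$ on $B_1(0)$ and a lower bound on the Hessian of $P$ (or of the map $(\partial_1 P,\partial_2 P)$) transverse to the axis away from the origin.

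Next I would set up a dyadic decomposition of $B_r(0)$ into annular regions $A_j = \{2^{-j-1} \leq \mathrm{dist}(x,\{x_1=x_2=0\}) \leq 2^{-j}\}$ intersected with a fixed radius, plus a small ball $B_\rho(0)$ around the origin where we use only the trivial bound $H^{n-2}(\Cr(u)\cap B_\rho)\le c(n)\rho^{n-2}$ after we know $\Cr(u)\cap B_\rho$ is contained in a tube of radius $\rho$. On each annulus $A_j$, the gradient of $P$ has definite size $\sim 2^{-j(d-1)}$ and, crucially, the $2\times 2$ "transverse Jacobian" of $(\partial_1 P, \partial_2 P)$ in the $(x_1,x_2)$-directions is nondegenerate with quantitative bounds (its determinant is comparable to $(x_1^2+x_2^2)^{d-1}$). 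Choosing $\epsilon$ small enough — here is where the smallness of $\|u-P\|_{C^{2d^2}}$ enters, and $2d^2$ is chosen so that two derivatives of the gradient map are controlled on the relevant scale — the same nondegeneracy persists for $u$ on $A_j$ after rescaling. By the implicit function theorem applied to the map $x \mapsto (\partial_1 u, \partial_2 u)$, the set $\Cr(u) \cap A_j$ is (contained in) a graph over an $(n-2)$-dimensional subspace with Lipschitz constant bounded independently of $j$, so $H^{n-2}(\Cr(u)\cap A_j) \leq c(n,d)\,(2^{-j})^{n-2}$ when $A_j$ is further localized to unit transverse extent; summing the geometric series over $j$ from $0$ to $\log_2(1/\rho)$ gives a bound of the form $c(n,d)\rho^{n-2} \cdot (\text{bounded})$ on the annular part, and adding the central ball contribution $c(n)\rho^{n-2}$ and rescaling from $B_1$ to $B_r$ by homogeneity yields $H^{n-2}(\Cr(u)\cap B_r(0)) \leq c(n)(d-1)^2 r^{n-2}$, with the $(d-1)^2$ factor coming from the multiplicity of sheets of $\Cr(u)$ near each annulus (the gradient map has degree $\sim d-1$ in the transverse variables, so generically $\sim (d-1)^2$ preimages of $0$, or more carefully a bound on the number of graph pieces).

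The main obstacle I expect is controlling $\Cr(u)$ near the origin, i.e., at the tip of the cone where $\nabla P$ itself degenerates and the implicit function theorem on annuli is useless: one must show $\Cr(u)$ stays inside a thin tube $B_{c\rho}(\{x_1=x_2=0\})$ on $B_\rho(0)$ for all small $\rho$, not just for one scale. This requires a scale-invariant (self-improving) version of the closeness: after rescaling $u$ to $B_\rho(0)$ and renormalizing, $u$ is still close to the homogeneous $P$ (because $P$ is scale-invariant up to the factor $\rho^d$ and harmonic rescalings of $u$ converge to its leading polynomial), so the confinement of $\Cr(u)$ to a tube propagates down all scales; making this rigorous with a uniform $\epsilon$ — and checking that the needed number of derivatives is exactly the $2d^2$ in the hypothesis — is the delicate point. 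A clean alternative is to invoke that this is precisely \cite[Lemma 3.2]{hanhardtlin}, whose proof already carries out this annular-plus-tip analysis; since the statement is quoted verbatim from there, I would cite that proof rather than reproduce it, using the structure above only to indicate why the constant has the stated form $c(n)(d-1)^2 r^{n-2}$.
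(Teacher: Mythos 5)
This lemma is not proved in the paper at all: it is quoted verbatim as an imported $\epsilon$-regularity result, \cite[Lemma 3.2]{hanhardtlin}, and the paper's ``proof'' is precisely the citation. Your fallback of citing Han--Hardt--Lin is therefore exactly what the paper does, and is the appropriate move here. Regarding your attempted direct argument: the part away from the symmetry axis is sound in outline (on a dyadic annulus $A_j$ the transverse gradient map $(\partial_1 P,\partial_2 P)$ is quantitatively nondegenerate, so for $\epsilon$ small the critical set of $u$ in $A_j$ lies on Lipschitz $(n-2)$-graphs and the dyadic sum converges), but the crux is the point you yourself flag: a $C^{2d^2}$-closeness hypothesis at unit scale does not by itself propagate to all scales near the axis, since $\nabla P$ degenerates like $\mathrm{dist}(\cdot,\{x_1=x_2=0\})^{d-1}$ there, and the estimate must hold uniformly for all $r\le\bar r$. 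In \cite{hanhardtlin} this is handled by an induction on the vanishing order (at a critical point of $u$ the function is close, after rescaling, to a homogeneous polynomial of strictly lower degree), and it is this induction --- not a generic count of $\sim(d-1)^2$ preimages of $0$ on an annulus --- that produces the constant $c(n)(d-1)^2$ and fixes the number of derivatives $2d^2$ needed in the hypothesis. So as a standalone proof your sketch has a genuine gap at the tip, but as a reading of how the lemma enters this paper it is consistent: cite the reference, as the authors do, and use the $r^d\cos(d\theta)$ normal form (which the paper also invokes right after the lemma) only to make the constants uniform in $P$ given the frequency bound.
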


It is not difficult to see that, if we assume $u$ harmonic in $B_1$ with $\bar N^u(0,1)\leq \Lambda$, 
then $\epsilon$ and $\bar r$ can be chosen to be independent of $P$, but dependent only on $\Lambda$.
 Indeed, up to rotations and rescaling, all polynomials with $n-2$ symmetries in $\R^n$ of degree $d$
 look like $P(r,\theta,z)=r^d \cos(d\theta)$, where we used cylindrical coordinates for $\R^n$. 
Combining this with elliptic estimates yields the following corollary.

\begin{corollary}\label{cor_ereg}
Let $u:B_1\to \R$ be a harmonic function with $\bar N(0,1)\leq \Lambda$. Then there exist positive 
constants $\epsilon(\Lambda,n)$ and $\bar r(\Lambda,n)$ such that if there exists a normalized
 homogeneous harmonic polynomial $P$ with $n-2$ symmetries such that
\begin{gather}
 \norm{T^u_{0,1}-P}_{L^2(\partial B_1)}<\epsilon\, ,\, \fint_{\partial B_1(0)} P^2 = 1\, ,
\end{gather}
then for all $r\leq \bar r$:
\begin{gather}
 H^{n-2}(\nabla u^{-1}(0)\cap B_r(0))\leq c(\Lambda,n)r^{n-2}\, .
\end{gather}
\end{corollary}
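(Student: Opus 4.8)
The plan is to deduce this from Lemma \ref{lemma_n-2Ha} by showing that the constants $\epsilon$, $\bar r$, and $c$ appearing there can be made uniform over the relevant family of polynomials $P$ once a frequency bound $\bar N^u(0,1)\le\Lambda$ is in force. First I would observe that the frequency bound restricts the degree: by the monotonicity of $\bar N$ and the remark identifying $\bar N(0,0)$ with the vanishing order, any homogeneous harmonic polynomial $P$ appearing as (an approximation of) $T^u_{0,1}$ has degree $d\le\Lambda$; in particular only finitely many degrees occur. Next, using the normalization $\fint_{\partial B_1(0)}P^2=1$ together with the classification of homogeneous harmonic polynomials in $\R^n$ with exactly $n-2$ symmetries, I would note that modulo an orthogonal change of coordinates each such $P$ is, up to sign, the single polynomial $P_d(r,\theta,z)=c_{n,d}\,r^d\cos(d\theta)$ in cylindrical coordinates, where $c_{n,d}$ is the normalizing constant. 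So the family of relevant $P$'s is, up to rotation and sign, a finite set indexed by $d\in\{1,\dots,\lfloor\Lambda\rfloor\}$. Since $H^{n-2}(\nabla u^{-1}(0)\cap B_r)$ and the hypotheses of Lemma \ref{lemma_n-2Ha} are invariant under orthogonal transformations and under $P\mapsto -P$, the constants $\epsilon(P)$, $\bar r(P)$ and $c(n)(d-1)^2$ can be replaced by $\epsilon(\Lambda,n)=\min_d\epsilon(P_d)$, $\bar r(\Lambda,n)=\min_d\bar r(P_d)$, and $c(\Lambda,n)=c(n)(\lfloor\Lambda\rfloor-1)^2$.

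It remains to bridge the gap between the $L^2(\partial B_1)$-closeness hypothesis in the corollary and the $C^{2d^2}(B_1)$-closeness hypothesis of Lemma \ref{lemma_n-2Ha}. Here I would use interior elliptic estimates for harmonic functions exactly as in the proof of Proposition \ref{prop_qcrit}: if $T^u_{0,1}$ is $L^2(\partial B_1)$-close to $P$, then since both $u$ and $P$ are harmonic, $w=T^u_{0,1}-P$ is harmonic, and standard gradient estimates give $\|w\|_{C^{2d^2}(B_{3/4})}\le C(n,d)\|w\|_{L^2(B_1)}$; combining this with the doubling estimate \eqref{eq_dou} (which controls $\|T^u_{0,1}\|_{L^2(B_1)}$ in terms of $\|T^u_{0,1}\|_{L^2(\partial B_1)}$ and the frequency bound), one converts $L^2(\partial B_1)$-smallness of $w$ into $C^{2d^2}$-smallness on a slightly smaller ball, which after rescaling is all Lemma \ref{lemma_n-2Ha} needs. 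A cleaner route, which I would actually take, is the contradiction-compactness argument: if the conclusion fails there is a sequence $u_i$ of harmonic functions with $\bar N^{u_i}(0,1)\le\Lambda$ and normalized $n{-}2$-symmetric polynomials $P_i$ with $\|T^{u_i}_{0,1}-P_i\|_{L^2(\partial B_1)}\to 0$ but with the Hausdorff bound violated on balls up to some fixed radius; passing to a subsequence, $P_i\to P$ (finitely many degrees, compact rotation group) and $T^{u_i}_{0,1}\to P$ in $C^\infty_{loc}$ by elliptic estimates, so for $i$ large $T^{u_i}_{0,1}$ satisfies the hypothesis of Lemma \ref{lemma_n-2Ha} with constants associated to $P$, giving the bound and a contradiction.

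The main obstacle is the uniformity of $\epsilon$ and $\bar r$ in $P$: a priori Lemma \ref{lemma_n-2Ha} gives these constants depending on $P$ in an unspecified way, and the space of homogeneous harmonic polynomials with $n-2$ symmetries is noncompact (the degree can be arbitrarily large) so one cannot simply take a minimum. The frequency bound $\bar N^u(0,1)\le\Lambda$ is exactly what rescues this, by capping the degree $d\le\Lambda$ and thereby reducing the noncompact family to a finite one modulo the (compact) orthogonal group and sign. Once this degree bound is in hand the rest is the routine compactness-and-elliptic-estimates packaging sketched above; the rescaling bookkeeping from $B_1$ to $B_r$ and the passage from boundary $L^2$ control to interior $C^k$ control are standard and cause no real difficulty.
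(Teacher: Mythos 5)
Your proposal is correct and follows essentially the same route as the paper, whose justification is exactly the paragraph preceding the corollary: the frequency bound caps the degree, all $(n-2)$-symmetric homogeneous harmonic polynomials of degree $d$ are rotations of $r^d\cos(d\theta)$ (so the constants of Lemma \ref{lemma_n-2Ha} can be made uniform in $P$), and interior elliptic estimates upgrade the $L^2(\partial B_1)$ hypothesis to the $C^{2d^2}$ closeness the lemma requires. Your compactness variant and the rescaling bookkeeping are standard and consistent with the paper's intent.
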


To prove the effective bound on the $(n-2)$-dimensional  Hausdorff measure, we combine the
 Minkowski type estimates of Theorem \ref{t:quantstrat} with the above corollary.  Using the quantitative 
stratification, we will use an inductive construction to split the critical set at different scales into a 
good part, the points where the function is close to an $(n-2)$-symmetric polynomial, and a bad part, 
whose tubular neighborhoods have definite bounds. Since we have estimates on the whole critical 
set in the good part, we do not have to worry any longer when we pass to a smaller scale. 
As for the bad part, by induction, we start  the process over and split it again into a good and a bad part. 
By summing the various contributions to the 
$(n-2)$-dimensional Hausdorff measure given by the good parts, we prove the following theorem:
\begin{theorem}\label{th_n-2h}
 Let $u$ be a harmonic function in $B_1(0)$ with $\bar N(0,1)\leq \Lambda$. There exists a constant 
$C(\Lambda,n)$ such that
\begin{gather}
 H^{n-2}(\Cr(u)\cap B_{1/2}(0))\leq C(n,\Lambda)\, .
\end{gather}
\end{theorem}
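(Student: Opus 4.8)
The plan is to prove Theorem \ref{th_n-2h} by an inductive stopping-time decomposition of the critical set over dyadic-type scales $\gamma^j$, combining the Minkowski estimates of Theorem \ref{t:quantstrat} on the stratum $\cS^{n-3}_{\eta,r}$ with the $\epsilon$-regularity statement of Corollary \ref{cor_ereg}. The heuristic is simple: at any ball $B_r(x)$ with $\bar N$-bounded $u$, either $T_{x,r}u$ is close (in the relevant norm) to a normalized homogeneous harmonic polynomial with exactly $n-2$ symmetries, in which case Corollary \ref{cor_ereg} gives the \emph{full} bound $H^{n-2}(\Cr(u)\cap B_{\bar r r}(x))\leq c(n,\Lambda)(\bar r r)^{n-2}$ and we never have to look inside $B_{\bar r r}(x)$ again; or else $u$ is \emph{not} $(n-2,\epsilon,r,x)$-symmetric for any polynomial of that type. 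I would want a small lemma saying that a harmonic homogeneous polynomial in $\dR^n$ that is $(n-2)$-symmetric but whose critical set is positive-dimensional must in fact be exactly $(n-2)$-symmetric (not $(n-1)$-symmetric, which is the linear case with empty critical set); this isolates the ``bad'' alternative to $\cS^{n-3}_{\eta,r}$ after adjusting $\epsilon,\eta$ by a compactness argument just like Theorem \ref{th_N2hom}. Thus, away from the good balls, the relevant points lie in $\cS^{n-3}_{\eta,r}$, whose $r$-tubular neighborhood has volume $\leq C r^{n-(n-3)-\eta}=Cr^{3-\eta}$.

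Concretely, I would set up the iteration as follows. Fix $\eta>0$ small (to be chosen $<1/2$, say, and compatible with Corollary \ref{cor_symsum}/Corollary \ref{cor_ereg}), and fix $\gamma=\gamma(n,\eta,\Lambda)$ from Theorem \ref{th_main_proof}. Start with $\mathcal{B}_0=\{B_{1/2}(0)\}$. Given a collection $\mathcal{B}_j$ of balls of radius $\gamma^j$, for each $B=B_{\gamma^j}(x)\in\mathcal{B}_j$, rescale so $B$ becomes the unit ball; if $T_{x,\gamma^j}u$ is $\epsilon$-close (in $L^2(\partial B_1)$) to a normalized harmonic homogeneous polynomial with $n-2$ symmetries, declare $B$ a \emph{good} ball, record the contribution $c(n,\Lambda)(\bar r\gamma^j)^{n-2}$ to the Hausdorff measure via Corollary \ref{cor_ereg}, and stop subdividing it. Otherwise $B$ is a \emph{bad} ball; by the adjusted $\epsilon$-regularity dichotomy, $\Cr(u)\cap B\subseteq \cS^{n-3}_{\eta,\gamma^j}\cap B$, which by Theorem \ref{t:quantstrat} has $\gamma^j$-tube of volume $\leq C(n,\Lambda,\eta)(\gamma^j)^{3-\eta}$; cover this tube by at most $C(n,\Lambda,\eta)(\gamma^j)^{3-\eta}/(\gamma^{j+1})^n = C\gamma^{-n}(\gamma^j)^{3-\eta-n}$ balls of radius $\gamma^{j+1}$, and let these be the children of $B$ in $\mathcal{B}_{j+1}$. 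One then checks that at scale $\gamma^j$ the number of bad balls is at most $\big(C\gamma^{-n}\big)\cdot$(number at previous scale)$\cdot(\gamma^j)^{3-\eta-n}$-type recursion, so $\#\mathcal{B}_j\leq C(n,\Lambda,\eta)(\gamma^j)^{-(n-2)-\eta'}$ for a suitable $\eta'\to 0$ with $\eta$; in particular the bad balls at scale $\gamma^j$ carry $H^{n-2}$-weight $\#\mathcal{B}_j\cdot(\gamma^j)^{n-2}\leq C(\gamma^j)^{-\eta'}$... which diverges, so I must instead track the weight more carefully. The correct bookkeeping: $\sum_B$ over good balls of $(\gamma^j)^{n-2}$ at level $j$ is bounded by $\#(\text{parents, all bad at level }j-1)$ times $C\gamma^{-n}(\gamma^{j-1})^{3-\eta-n}\cdot(\gamma^j)^{n-2}=C'\,(\gamma^{j-1})^{1-\eta}\cdot(\text{\# bad at }j-1)\cdot(\gamma^{j-1})^{-1}$, i.e. each bad parent contributes $C'(\gamma^{j})^{-\eta}\cdot\gamma^{j}/\gamma^{j-1}\cdot$ its own unit weight; chaining this back, since each bad-to-bad step multiplies the \emph{count} by $C\gamma^{-n}(\gamma^{j-1})^{3-\eta-n}$ and the per-ball $H^{n-2}$-size by $\gamma^{n-2}$, the product per step is $C\gamma^{-n}\cdot\gamma^{n-2}\cdot(\gamma^{j-1})^{3-\eta-n}=C\gamma^{-2}(\gamma^{j-1})^{3-\eta-n}$, which for $j$ large is $\ll 1$; hence the geometric series of good-ball contributions over all levels converges to $C(n,\Lambda)$, and the bad part shrinks to a set of zero $H^{n-2}$-measure (indeed its Minkowski content at scale $\gamma^j$ tends to $0$), giving the theorem.

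The one genuinely delicate point — and the step I expect to be the main obstacle — is the \textbf{conversion between the symmetry notion used in Theorem \ref{t:quantstrat} and the hypothesis of Corollary \ref{cor_ereg}}, together with the per-level estimate for the number of balls. The $\epsilon$-regularity lemma of \cite{hanhardtlin} needs closeness of $u$ itself to $P$ in a high $C^{m}$ norm on $B_1$, whereas the quantitative stratification is phrased via $L^2$-closeness of the \emph{normalized blow-up} $T_{x,r}u$ to $P$ on $B_1(0)$ (equivalently on $\partial B_1$). Bridging these requires: (i) interior elliptic estimates upgrading $L^2$-closeness of harmonic functions to $C^m_{loc}$-closeness (available since the difference $u-P$ is harmonic), which only gives control on, say, $B_{3/4}$ and forces the extra factor $\bar r<1$; and (ii) a compactness argument (in the spirit of Theorem \ref{th_N2hom}) to show that if $x\notin\cS^{n-3}_{\eta,\gamma^j}$ with $\eta$ small and $\Cr(u)\cap B_{\gamma^j}(x)\neq\emptyset$, then after rescaling $T_{x,\gamma^j}u$ is forced to be $\epsilon$-close to some normalized harmonic homogeneous polynomial with \emph{exactly} $n-2$ symmetries — using Proposition \ref{prop_poly}(1) to exclude the $(n-1)$-symmetric (linear) case, whose critical set is empty. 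Once this dichotomy is clean, the counting is routine bookkeeping of a geometric series as sketched above, and uniformity of the constant $c(n,\Lambda)$ in Corollary \ref{cor_ereg} (independent of the particular $P$) is exactly what makes the sum of good-ball contributions collapse to a single $\Lambda$-dependent constant.
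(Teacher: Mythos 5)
Your architecture is the same as the paper's: decompose $\Cr(u)$ by the first (largest) scale at which the point admits an approximate $(n-2)$-symmetric polynomial, handle those ``good'' balls with the $\epsilon$-regularity statement of Corollary \ref{cor_ereg}, control the residual set by the $\cS^{n-3}_{\eta,r}$ volume estimate, and sum a geometric series. You also correctly isolate the two genuinely delicate points: the passage from $L^2$-closeness of $T_{x,r}u$ to the $C^{2d^2}$-closeness required by Lemma \ref{lemma_n-2Ha} (handled by interior elliptic estimates plus uniformity over normalized polynomials of bounded degree, which is exactly how Corollary \ref{cor_ereg} is obtained), and the exclusion of the $(n-1)$-symmetric (linear) alternative at critical points via Proposition \ref{prop_poly}(1) / Proposition \ref{prop_qcrit}. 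So the ingredients and the dichotomy are right.

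The genuine gap is the ball counting, which as written does not close. First, the specific contraction claim is false: since $3-\eta-n<0$ and $\gamma<1$, the factor $(\gamma^{j-1})^{3-\eta-n}$ \emph{diverges} as $j\to\infty$, so the asserted per-step product $C\gamma^{-2}(\gamma^{j-1})^{3-\eta-n}\ll 1$ is wrong. Second, the recursion itself is problematic in either reading: if you apply the global tube bound of Theorem \ref{t:quantstrat} once per parent ball you over-count super-exponentially, and if you instead apply it locally (rescaling each bad ball to $B_1$) you pick up a fixed multiplicative constant $C(n,\Lambda,\eta)$ at every generation, so the count grows like $C^{j}$, which can defeat the gain $\gamma^{j(1-\eta)}$ in the $H^{n-2}$-weight. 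The paper avoids the recursion altogether: at each scale $\gamma^{k}$ it takes a cover of the level-$k$ piece $\Cr^{(k)}(u)\subseteq \cS^{n-3}_{\eta,\gamma^{k-1}}$ by balls of radius $\gamma^{k}\bar r$ whose half-radius balls are disjoint, and bounds the number $m(k)$ of such balls \emph{directly} by the global volume estimate, $m(k)\leq C(n,\Lambda,\eta)\,\gamma^{(3-\eta-n)k}$ with a single constant. Each such ball contributes at most $C(n,\Lambda)(\gamma^{k}\bar r)^{n-2}$ by Corollary \ref{cor_ereg}, giving $H^{n-2}(\Cr^{(k)}(u))\leq C\gamma^{(1-\eta)k}$, which sums; the residual set $\Cr(u)\cap\bigcap_j\cS^{n-3}_{\eta,\gamma^{j}}$ then has $H^{n-2}$-premeasure at scale $\gamma^{j}$ at most $C\gamma^{(1-\eta)j}\to 0$. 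Replacing your tree recursion by this scale-by-scale global count repairs the argument; everything else in your proposal matches the paper.
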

\begin{proof}
Note that by Lemma \ref{lemma_strong2bar}, for every $r\leq 1/3$ and $x\in B_{1/2}(0)$, the functions 
$T_{x,r}u$ have frequency uniformly bounded by $N^{T_{x,r}u}(0,1)\leq C(\Lambda,n)$. 
This will allow us to apply Corollary \ref{cor_ereg} to each $T_{x,r}u$ and obtain uniform 
constants $\epsilon(\Lambda,n)$ and $\bar r(\Lambda,n)$ such that the conclusion of the 
Corollary holds for all $x\in B_{1/2}(0)$ and $r\leq \bar r$.

Now fix $\eta>0$ to be the minimum of $\eta(n,\Lambda)$ from Proposition \ref{prop_qcrit} and 
$\epsilon(n,\Lambda)$ from Corollary \ref{cor_ereg}. Let $0<\gamma\leq 1/3$ and define the following sets:
\begin{gather}
 \Cr^{(0)}(u)=\Cr(u)\cap \ton{S^{n-2}_{\eta,1}\setminus S^{n-3}_{\eta,1}}\cap B_{1/2}(0)\, .\\
 \Cr^{(j)}(u)=\Cr(u)\cap \ton{S^{n-2}_{\eta,\gamma^j}
\setminus S^{n-3}_{\eta,\gamma^j}} \cap S^{n-3}_{\eta,\gamma^{j-1}}\cap B_{1/2}(0)\, .
\end{gather}
We decompose the critical set as follows:
\begin{gather}
 \Cr(u)\cap B_{1/2}(0)=\bigcup_{j=0}^\infty \Cr^{(j)}(u) \bigcup \ton{\Cr(u) \bigcap_{j=1}^\infty\, . 
S^{n-3}_{\eta,\gamma^j}}\, .
\end{gather}
It is evident from Theorem \ref{th_main_proof} that
\begin{gather}
 H^{n-2}\ton{\Cr(u) \bigcap_{j=1}^\infty S^{n-3}_{\eta,\gamma^j}\cap B_{1/2}(0)}=0\, .
\end{gather}
As for the other set, we will prove that
\begin{gather}
 H^{n-2} \ton{\bigcup_{j=0}^k \Cr^{(j)}(u)}\leq C(\Lambda,n,\eta)\sum_{j=0}^{k} \gamma^{(1-\eta)j}\, .
\end{gather}
Using Corollary \ref{cor_ereg} and a simple covering argument, it is easy to see that this 
statement is valid for $k=0$.

Choose a covering of the set $\Cr^{(k)}(u)$ by balls centered at $x_i \in \Cr^{(k)}(u)$ of 
radius $\gamma^k \bar r$, such that the same balls with half the radius are disjoint. 
Let $m(k)$ be the number of such balls. By the volume estimates in Theorem \ref{t:quantstrat}, we have
\begin{gather}
 m(k)\leq C(\eta,\Lambda,n)\gamma^{(3-\eta-n)k}\, .
\end{gather}
By construction of the set $\Cr^{(k)}(u)$, for each $x_i$ there exists a scale 
$s\in [\gamma^k,\gamma^{k-1}]$ such that for some normalized homogeneous 
polynomial of two variables $P$, we have
\begin{gather}
 \norm{T_{x_i,s}u-P}_{L^2(\partial B_1)}<\eta\, .
\end{gather}
Note that since $u$ is harmonic, we can assume without loss of generality that $P$ is 
harmonic as well. Indeed, if $\eta$ is small enough, we can find a homogeneous 
harmonic polynomial $P'$ such that $\norm{P-P'}_{L^2(\partial B_1)}<\eta$.

Using Corollary \ref{cor_ereg} we can deduce that
\begin{gather}
 H^{n-2}\ton{\nabla u^{-1}(0)\cap B_{\gamma^k \bar r}(x_i)}\leq C(\Lambda,n)\gamma^{(n-2)k}\, .
\end{gather}
Therefore,
\begin{gather}
 H^{n-2}\ton{\Cr^{(k)}(u)}\leq C(\Lambda,n,\eta)\gamma^{(1-\eta)k}\, .
\end{gather}
Since $0<\gamma,\eta<1$, the proof is complete.
\end{proof}

\section{Elliptic equations}\label{sec_ell}
With appropriate  modifications, the results proved for harmonic functions are valid  for 
solutions to elliptic equations of the form \eqref{eq_Lu} with conditions \eqref{e:coefficient_estimates}.
 Indeed, a Minkowski type estimate of the form given in Theorem \ref{th_main_proof} and Corollary \ref{cor_main} 
(in which there is an arbitrarily small positive loss in the exponent) remains valid without any 
further regularity assumption on the coefficients $a^{ij}$ and $b^i$. However, in order to get 
an effective bound on the $(n-2)$-dimensional Hausdorff measure of the critical set, we will 
assume some additional control on the higher order derivatives of the coefficients of the PDE.

The basic ideas needed to estimate the critical sets of solutions to elliptic equations are exactly
 the same as in the harmonic case. The primary new
technical ingredient is a {\it generalized frequency} function, $\bar F(r)$
which is an almost monotone quantity, i.e., for $r$ effectively small
the function $e^{Cr}\bar F(r)$ is monotone nondecreasing; see  Theorem \ref{th_Nellmon}.  
The function $\bar F(r)$  will replace the frequency function of the harmonic case.   
It is constructed by  a generalizing a constructions of \cite{galin1,galin2}. Their function however,
 is only almost monotone for operators of divergence form on $\dR^n$ for $n\geq 3$.  
Our construction will take up most of the next subsection.  Though the proofs of many points
 involve standard techniques, we will include them for convenience and completeness.

\subsection{The Generalized Frequency Function}\label{sec_freqell}

In this section we define a generalized version of Almgren's frequency, denoted by
 $\bar F$, suitable to study the properties of solutions to \eqref{eq_Lu}.
 Even though the ideas in the construction are the same
 as in \cite{galin1,galin2} \footnote{see also the survey \cite{hanlin}}, some of the details are different.
 This allow us to prove the almost monotonicity for a wider class of operators, and in particular,
for those dealt with in this paper. 
 For the reader's convenience, we include the proof of almost monotonicity of $\bar F$.

As a first step towards the definition,
we introduce a new metric related to the coefficients $a_{ij}$, which is closely related to 
the constructions in \cite{hanlin}. For the sake of simplicity, we will occasionally use the terms 
and notations typical of Riemannian manifolds. For instance, we denote by $a_{ij}$ the elements 
of the inverse matrix of $a^{ij}$ and by $a$ the determinant of $a_{ij}$. The metric $g_{ij}$ (also denoted by $g$)
will be defined on $B_1(0)\subseteq \R^n$ and $e_{ij}$ will denote the standard Euclidean metric. 
For ease of notation, we define $B(g,x,r)$ to be the geodesic ball centered at $x$ with radius $r$
 with respect to the metric $g$.

It would seem natural to define a metric $g_{ij}=a_{ij}$ and use this metric in the definition of the
 frequency function. However, for such a metric the geodesic polar coordinates at a point $x$ are 
well defined only in a small ball centered at $x$ whose radius is not easily bounded from below with 
only Lipschitz control on the $a_{ij}$.  To avoid this problem, we define a similar but slightly different
 metric which has been introduced in \cite[eq. (2.6)]{toc}, and later used also in \cite{galin1,galin2}; see
 also the nice survey paper \cite[Section 3.2]{hanlin}. In these papers, the authors use this metric to define
 a frequency function which turns out to be almost monotone at small scales for elliptic equations in 
divergence form on $\R^n$ with $n\geq 3$, and only bounded at small enough scales for more general equations.

We will introduce a modified frequency function which we will prove to be almost monotone at small
 scales for all solutions of equation \eqref{eq_Lu}, with neither a restriction on the dimension $n$,
 nor a divergence form assumption.

To begin with, we recall from \cite{toc}, the definition and some properties of the new metric $g_{ij}$.
  Fix an origin $\bar x$, and define the function $r^2$ on the Euclidean ball $B_1(0)$ by
\begin{gather}
 r^2=r^2(\bar x,x)=a_{ij}(\bar x) (x-\bar x)^i (x-\bar x)^j\, ,
\end{gather}
where $x=x^ie_i$ is the usual decomposition in the canonical basis of $\R^n$. Note that the level sets 
of $r$ are Euclidean ellipsoids centered at $\bar x$, and the assumptions on the coefficients $a_{ij}$ lead
 to the estimate
\begin{gather}
 \lambda^{-1} \abs {x-\bar x} ^2 \leq r^2(\bar x,x)\leq \lambda \abs {x-\bar x}^2\, .
\end{gather}
\begin{proposition}\label{prop_gij}
 With the definitions above, set
\begin{gather}
 \eta(\bar x,x)={a^{kl}(x)\frac{\partial r(\bar x,x)}{\partial x^k}\frac{\partial r(\bar x,x)}{\partial x^l}}
={a^{kl}(x)\frac{a_{ks}(\bar x)a_{lt}(\bar x)(x-\bar x)^s(x-\bar x)^t}{r^2}}\, ,\\
g_{ij}(\bar x,x)=\eta(\bar x,x)a_{ij}(x)\, .
\end{gather}
Then for each $\bar x \in B_{1}(0)$, the geodesic distance $d_{\bar x}(\bar x,x)$ in the metric 
$g_{ij}(\bar x,x)$ is equal to $r(\bar x,x)$.  In particular,  geodesic polar coordinates with respect to $\bar x$ are 
well-defined on the Euclidean ball of radius $\lambda^{-1/2} (1-\abs {\bar x})$. Moreover in
 these coordinates the metric assumes the form
\begin{gather}
 g_{ij}(\bar x, (r,\theta))=dr^2+ r^2 b_{st}(\bar x, (r,\theta))d\theta^s d\theta^t\, ,
\end{gather}
where the $b_{st}(\bar x,r,\theta)$ can be extended to Lipschitz functions in 
$[0,\lambda^{-1/2}(1-\abs {\bar x})]\times \partial B_1$ with
\begin{gather}\label{eq_bst}
 \abs{\frac{\partial b_{st}}{\partial r}}\leq C(\lambda)\, ,
\end{gather}
and $b_{st}(\bar x,0,\theta)$ is the standard Euclidean metric on $\partial B_1$.
\end{proposition}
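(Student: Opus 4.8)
The plan is to show that $r(\bar x,\cdot)$ is exactly the $g$-distance to $\bar x$, to deduce the polar form of $g$ from the Gauss lemma, and then to read off the regularity of the $b_{st}$ from a uniform Lipschitz bound on the Christoffel symbols of $g$. Write $A=(a_{ij}(\bar x))$. The first step is the direct computation that $r$ solves the eikonal equation: since $g^{ij}=\eta^{-1}a^{ij}$ and $\partial_k r=A_{km}(x-\bar x)^m/r$, the very definition of $\eta$ gives $a^{kl}(x)\,\partial_k r\,\partial_l r=\eta$, hence $g^{kl}\partial_k r\,\partial_l r\equiv1$ and $|\nabla^g r|_g\equiv1$. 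Note that $r$ is smooth in $x$ away from $\bar x$ (the square root of a constant-coefficient positive-definite quadratic form), so $\eta$, and hence $g$, are Lipschitz on the punctured ball, while $\eta(\bar x,\bar x)=1$, so $g(\bar x,\bar x)=A$.

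Next I would identify $r$ with the geodesic distance. From $\lambda^{-1}|x-\bar x|^2\le r^2\le\lambda|x-\bar x|^2$ the set $U=\{r(\bar x,\cdot)<\lambda^{-1/2}(1-|\bar x|)\}$ is an ellipsoid compactly contained in $B_1(0)$, on which $g$ is a genuine Lipschitz metric. For any $g$-rectifiable curve $\sigma$ in $U$ from $\bar x$ to $x$, Cauchy--Schwarz together with the eikonal equation gives $L_g(\sigma)=\int|\dot\sigma|_g\ge\int\langle\nabla^g r,\dot\sigma\rangle_g=\int\tfrac{d}{dt}r(\sigma(t))\,dt=r(\bar x,x)$, with equality along the integral curve of $\nabla^g r$ emanating from $\bar x$ (on which $\tfrac{d}{dt}r=|\nabla^g r|_g^2=1$). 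Hence $d_{\bar x}(\bar x,x)=r(\bar x,x)$, these integral curves are the unit-speed minimizing geodesics from $\bar x$, and, $r$ being smooth on $U\setminus\{\bar x\}$, its level sets are smooth compact hypersurfaces which together with the radial geodesics provide global geodesic polar coordinates on $U$: identifying the unit sphere of $(T_{\bar x}\dR^n,A)$ with $\partial B_1$ via $\theta\mapsto A^{-1/2}\theta$, these are $x=\exp_{\bar x}(r\,A^{-1/2}\theta)$.

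In these coordinates the radial curve is a unit-speed geodesic, so $\partial_r=\nabla^g r$ and $g(\partial_r,\partial_r)=1$; since $\partial_{\theta^s}$ is tangent to $\{r=\mathrm{const}\}$, the Gauss lemma $g(\partial_r,\partial_{\theta^s})=dr(\partial_{\theta^s})=0$ holds, so $g=dr^2+(g_r)_{st}\,d\theta^s d\theta^t$ with $g_r$ the induced metric on the geodesic sphere, and we set $b_{st}(\bar x,r,\theta)=r^{-2}(g_r)_{st}$. Integrating the geodesic equation, whose Christoffel symbols are $O(\lambda)$ (see below), gives the uniform expansion $x(r,\theta)=\bar x+r\,A^{-1/2}\theta+O(\lambda r^2)$ and $\partial_{\theta^s}x=r\,A^{-1/2}\partial_{\theta^s}\theta+O(\lambda r^2)$; plugging this into $(g_r)_{st}=\eta\,a_{ij}(x)(\partial_{\theta^s}x)^i(\partial_{\theta^t}x)^j$ and using $\eta\to1$, $a_{ij}(x)\to A_{ij}$ shows $b_{st}(\bar x,0,\theta)$ is the round metric on $\partial B_1$. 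For the radial derivative, $\partial_r(r^2 b_{st})=\partial_r(g_r)_{st}=2\,\Hess^g r(\partial_{\theta^s},\partial_{\theta^t})$, twice the second fundamental form of the geodesic sphere. Writing $\Hess^g r=\Hess^{e}r-\Gamma^k_{ij}(g)\,\partial_k r\,dx^i dx^j$, where the ordinary Hessian is $\Hess^{e}r=r^{-1}(A-dr\otimes dr)$, restricting to the tangent space of $\{r=\mathrm{const}\}$ (on which $dr$ vanishes), and comparing with $r\,b_{st}=r^{-1}(g_r)_{st}=r^{-1}\eta\,a_{ij}(x)(\partial_{\theta^s}x)^i(\partial_{\theta^t}x)^j$, the difference of the two $r^{-1}$-terms equals $r^{-1}(A_{ij}-\eta\,a_{ij}(x))(\partial_{\theta^s}x)^i(\partial_{\theta^t}x)^j=O(\lambda r^2)$, since $|A_{ij}-\eta\,a_{ij}(x)|\le|A_{ij}-a_{ij}(x)|+|a_{ij}(x)|\,|1-\eta|\le C\lambda|x-\bar x|$ and $|\partial_{\theta^s}x|\le Cr$; the Christoffel term contributes $O(\lambda)\cdot|\partial_{\theta^s}x|\,|\partial_{\theta^t}x|=O(\lambda r^2)$. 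Hence $|r^2\partial_r b_{st}|\le C(\lambda)r^2$, i.e. $|\partial_r b_{st}|\le C(\lambda)$, and together with the limit at $r=0$ and the smooth $\theta$-dependence of $\exp_{\bar x}$ this yields the Lipschitz extension of $b_{st}$ to $[0,\lambda^{-1/2}(1-|\bar x|)]\times\partial B_1$.

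\emph{Main obstacle.} The one point that is not routine is the claim used repeatedly above that $|\partial g|=|\partial(\eta a)|\le C(\lambda)$ on $U\setminus\{\bar x\}$: a priori the denominator $r^2$ in the definition of $\eta$ could produce a $1/r$ singularity. The resolution is that, using $a^{kl}(\bar x)A_{km}=\delta^l_m$, one has $\eta-1=\big(a^{kl}(x)-a^{kl}(\bar x)\big)A_{km}A_{ln}(x-\bar x)^m(x-\bar x)^n/r^2$, whose numerator vanishes to third order at $\bar x$ and has gradient $O(\lambda|x-\bar x|^2)$; together with $r^2\sim|x-\bar x|^2$ this forces $|\nabla\eta|\le C(\lambda)$ after the apparent singularities cancel. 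Granting this, the comparison estimate for $\Hess^g r$ and the expansion of $\exp_{\bar x}$ used above are standard, and all assertions of the proposition follow.
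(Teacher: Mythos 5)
The paper does not actually prove this proposition: it is recalled from the reference \cite{toc} (and used in \cite{galin1,galin2} and the survey \cite{hanlin}), so your proposal is supplying a proof the authors delegate to the literature. Your reconstruction follows the classical route and its quantitative core is correct: the eikonal identity $g^{kl}\partial_k r\,\partial_l r\equiv 1$, the Cauchy--Schwarz argument identifying $r$ with the $g$-distance via the integral curves of $\nabla^g r$, the cancellation $\eta-1=\bigl(a^{kl}(x)-a^{kl}(\bar x)\bigr)A_{km}A_{ln}(x-\bar x)^m(x-\bar x)^n/r^2$ giving $\abs{\nabla\eta}\leq C(\lambda)$ and hence a uniform Lipschitz bound on $g$ (this is indeed the crux, and your ``main obstacle'' paragraph resolves it correctly), and the final estimate $r^2\partial_r b_{st}=2r^{-1}\bigl(A_{ij}-\eta a_{ij}(x)\bigr)\partial_{\theta^s}x^i\partial_{\theta^t}x^j-2\Gamma^k_{ij}\partial_k r\,\partial_{\theta^s}x^i\partial_{\theta^t}x^j=O(\lambda r^2)$, which uses only $\abs{A-\eta a(x)}\leq C\lambda r$, $\abs\Gamma\leq C(\lambda)$ and $\abs{\partial_\theta x}\leq Cr$.

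The one step you should tighten is precisely where you invoke ``integrating the geodesic equation, whose Christoffel symbols are $O(\lambda)$'' to get the polar coordinate map and the bounds $x(r,\theta)=\bar x+rA^{-1/2}\theta+O(\lambda r^2)$, $\partial_{\theta^s}x=rA^{-1/2}\partial_{\theta^s}\theta+O(\lambda r^2)$. For a metric that is merely Lipschitz, $L^\infty$ bounds on the Christoffel symbols do not give a well-defined exponential map (geodesics from a point need not be unique), nor differentiable dependence on the initial direction; moreover $\nabla^g r$ has derivative of size $1/r$ near $\bar x$, so a naive Gronwall bound on the variation does not yield $\abs{\partial_\theta x}\leq Cr$. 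The fix is available from ingredients you already have: write $\omega=(x-\bar x)/r$ and $E:=\nabla^g r-\omega$, so that along the radial flow $\partial_r\omega=E(\bar x+r\omega)/r$; your Lipschitz estimate on $\eta$ gives $\abs E\leq C(\lambda)r$ and $\operatorname{Lip}(E)\leq C(\lambda)$, hence the right-hand side is bounded and uniformly Lipschitz in $\omega$, and Picard--Lindel\"of from $r=0$ with $\omega(0)=A^{-1/2}\theta$ yields well-posedness of the polar map, $\abs{\partial_\theta\omega}\leq C(\lambda)$ (so $\abs{\partial_\theta x}\leq C(\lambda)r$), and both expansions. Separately, since $g$ is only Lipschitz, the identity $\partial_r(g_r)_{st}=2\,\Hess^g r(\partial_{\theta^s},\partial_{\theta^t})$ and the Christoffel symbols should be handled a.e.\ or, more cleanly, by smoothing $a^{ij}$ (all constants depend only on $\lambda$) and passing to the limit, as the paper itself does elsewhere. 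With these repairs your argument is a complete and correct substitute for the cited result.
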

\begin{remark}\label{rem_lip}
 For the time being, let $\bar x=0$ be fixed. As seen in the proposition, if $a^{ij}$ is 
Lipschitz, then so is also the metric $g_{ij}$. However, if the coefficients $a^{ij}$ are assumed
 to have higher regularity, for example $C^1$ or $C^m$, it easily seen that $g_{ij}$ is of higher 
regularity away from the origin. But at the origin, in general, $g_{ij}$  is only Lipschitz.
\end{remark}

Before giving  the formula for the generalized frequency, we rewrite equation \eqref{eq_Lu} in a 
Riemannian form with respect to the metric $g_{ij}$. Using the Riemannian scalar product and Laplace operator, 
relation \eqref{eq_Lu} is equivalent to
\begin{gather}
 \Delta_g (u)=\ps{B\,}{\,\nabla u}_{g}\, ,
\end{gather}
where $B$ is the vector field which in the standard Euclidean coordinates\! has components
\begin{gather}\label{eq_B}
 B_i = -\eta^{-1} b_i +\frac{\partial}{\partial x^i}\log\ton{g^{1/2} \eta^{-1}}\, .
\end{gather}
Given conditions \eqref{e:coefficient_estimates}, it is easy to prove the bound 
$$
\ps{B\, }{\, B}_g =\abs B^2 _g\leq C(\lambda)\, .
$$

Now we are ready to define the generalized frequency function for a (weak) solution 
$u$ to \eqref{eq_Lu}. For convenience of notation, we will denote this new frequency $\bar F$.
\begin{definition}\label{deph_LN}
For a solution $u$ to equation \eqref{eq_Lu}, for each $\bar x\in B_1(0)$ and
 $r\leq \lambda^{-1/2}(1-\abs {\bar x})$, define
\begin{gather}
 D(u,\bar x,g,r)=\int_{B(g(\bar x),\bar x,r)}\norm{\nabla u}_{g(\bar x)}^2dV_{g(\bar x)}
=\int_{r(\bar x,x)\leq r} \eta^{-1}(\bar x,x) a^{ij}(x) \partial_i u \partial_j u\sqrt{\eta^n(\bar x,x)a(x)}dx\, .\\
\!\!\!\!\! \!\!\!\!\! \!\!\!\!\! \!\!\!\!\!\!\!\!\!\! \!\!\!\!\! \!\!\!\!\! \!\!\!\!\!\!\!\!\!\! \!\!\!\!\!\!\!\! \!\!\!\!\!
\!\!\!\!
 I(u,\bar x,g,r)=\int_{B(g(\bar x),\bar x,r)}\norm{\nabla u}_{g(\bar x)}^2
+ (u-u(\bar x))\Delta_{g(\bar x)} (u )dV_{g(\bar x)}=\\
\notag
\!\!\!\!\!\!\!\!\!\!\!=\int_{r(\bar x,x)\leq r} \norm{\nabla u}_{g(\bar x)}^
2+ (u-u(\bar x))\ps{B\, }{\, \nabla u}_{g(\bar x)} dV_{g(\bar x)}\, .\\
\!\! \!\!\!H(u,\bar x,g,r)=\int_{\partial B(g(\bar x) ,\bar x,r)} 
\qua{u-u(\bar x)}^2 dS_{g(\bar x)}=r^{n-1}\int_{\partial B_1} \qua{u(r,\theta)-u(\bar x)}^2
\sqrt{b(\bar x,r,\theta)}d\theta\, .\\
\!\!\!\!\! \!\!\!\!\! \!\!\!\!\! \!\!\!\!\!\!\!\!\!\! \!\!\!\!\! \!\!\!\!\! \!\!\!\!\! \!\!\!\!\! \!\!\!\!\! \!\!\!\!\!\!\!\!\!\! \!\!\!\!\! \!\!\!\!\!
 \!\!\!\!\! \!\!\!\!\! \!\!\!\!\! \!\!\!\!\!\!\!\!\!\! \!\!\!\!\! \!\!\!\!\! 
\!\!\!\!\! \!\!\!\!\! \!\!\!\!\! \!\!\!\!\!\!\!\!\!\! \!\!\!\!\! \!\!\!\!\! \!\!
\bar F(u,\bar x,g,r) 
=\frac{rI(u,\bar x,g,r)}{H(u,\bar x,g,r)}\, .
\end{gather}

Note that, by elliptic regularity, $\bar F$ is a locally Lipschitz function for $r>0$. Moreover,
since $u$ is not constant, by  unique continuation  and the maximum principle, $H(r)>0$ for 
all positive $r$. So $\bar F$ is well-defined. Note also that if the operator $\L$ in \eqref{eq_Lu}
 is the usual Laplace operator, then it is easily seen that $\bar F(u,x,g,r)= \bar N^u(x,r)$.
\end{definition}
For $t$ sufficiently small, we can bound $D$ in terms of $I$ and vice versa.
Moreover, by using the Poincaré inequality, we can  bound $\bar F$ away from zero.
\begin{proposition}\label{prop_ID}
 Fix $u$, $x$ and the relative metric $g$. There exists a constant $C(\lambda)$ and 
$r_0=r_0(n,\lambda)>0$ such that for all admissible $r$, 
$$
I(r)\leq C D(r)\, ,
$$
while for $r\leq r_0$, 
$$
D(r)\leq C I(r)\, .
$$
 Moreover, there exits $c(n,\lambda)>0$ for which 
$$
\bar F(r)\geq c(n,\lambda)\, ,
$$ for all $r\leq r_0$.
\end{proposition}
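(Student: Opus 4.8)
The plan is to exploit the fact that, in the $g(\bar x)$-geodesic polar coordinates guaranteed by Proposition \ref{prop_gij}, the difference between $I(r)$ and $D(r)$ is exactly the correction term $\int_{B(g(\bar x),\bar x,r)}(u-u(\bar x))\ps{B}{\nabla u}_{g(\bar x)}\,dV_{g(\bar x)}$, whose size I can control using the uniform bound $\abs{B}_g^2\leq C(\lambda)$ together with a Poincar\'e-type inequality relating $\int(u-u(\bar x))^2$ over $B(g(\bar x),\bar x,r)$ to $r^2 D(r)$. First I would observe that $I(r)-D(r)=\int_{B(g(\bar x),\bar x,r)}(u-u(\bar x))\ps{B}{\nabla u}_{g(\bar x)}\,dV_{g(\bar x)}$, so by Cauchy--Schwarz and $\abs B_g\leq C(\lambda)$,
\begin{gather}
\abs{I(r)-D(r)}\leq C(\lambda)\ton{\int_{B(g(\bar x),\bar x,r)}(u-u(\bar x))^2\,dV_{g(\bar x)}}^{1/2} D(r)^{1/2}\, .
\end{gather}
Next I would establish the Poincar\'e inequality $\int_{B(g(\bar x),\bar x,r)}(u-u(\bar x))^2\,dV_{g(\bar x)}\leq C(n,\lambda)r^2 D(r)$ for $r\leq r_0$; this follows by integrating along geodesic rays from $\bar x$ in the polar coordinates of Proposition \ref{prop_gij}, using that $b_{st}(\bar x,0,\theta)$ is the Euclidean metric and $\abs{\partial_r b_{st}}\leq C(\lambda)$, so the volume element and the radial integration are comparable to the Euclidean ones up to constants depending only on $n,\lambda$ for $r$ below a threshold $r_0(n,\lambda)$. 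Combining these two estimates gives $\abs{I(r)-D(r)}\leq C(n,\lambda)r\,D(r)$, hence for $r\leq r_0$ (shrinking $r_0$ if necessary so that $C(n,\lambda)r_0\leq 1/2$) both $I(r)\leq \tfrac32 D(r)$ and $D(r)\leq 2 I(r)$; the first inequality in fact holds for all admissible $r$ directly from the trivial bound $\abs{I(r)-D(r)}\leq C(\lambda)r\cdot (r^{-1}\int(u-u(\bar x))^2)^{1/2}D(r)^{1/2}$ once one notes $\int_{B}(u-u(\bar x))^2\le C(\lambda)\diam(B)^2 D(r)$ still holds on the whole admissible range (with a worse constant), giving $I(r)\le C D(r)$ without the smallness restriction.

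For the lower bound on $\bar F$, I would use that $H(r)=\int_{\partial B(g(\bar x),\bar x,r)}(u-u(\bar x))^2\,dS_{g(\bar x)}$ and that, again in polar coordinates, the Poincar\'e/trace inequality on the geodesic sphere together with radial integration bounds $H(r)$ by a constant times $r\int_{B(g(\bar x),\bar x,r)}\ton{(u-u(\bar x))^2/r^2 + \abs{\nabla u}_g^2}\,dV_{g(\bar x)}$; feeding the interior Poincar\'e inequality back in, $H(r)\leq C(n,\lambda)r\,D(r)\leq C(n,\lambda)r\,I(r)$ for $r\leq r_0$, which is precisely $\bar F(r)=rI(r)/H(r)\geq c(n,\lambda)>0$.

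The main obstacle I expect is the careful verification of the Poincar\'e and trace inequalities with constants depending \emph{only} on $n$ and $\lambda$ and uniform over the basepoint $\bar x$: one must check that the metrics $g_{ij}(\bar x,\cdot)$ are, near $\bar x$ and in the polar coordinates of Proposition \ref{prop_gij}, uniformly comparable to the Euclidean metric with the comparison constant and the threshold radius $r_0$ independent of $\bar x$. This is exactly what \eqref{eq_bst} and the statement that $b_{st}(\bar x,0,\theta)$ is Euclidean are designed to provide, so the argument reduces to a quantitative bookkeeping of those bounds rather than anything conceptually new; the subtlety is only that the Lipschitz (not better) regularity at the origin prevents a cleaner comparison, so all estimates must be phrased in integral form.
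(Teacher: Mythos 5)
Your overall route is the same as the paper's: write $I(r)-D(r)$ as the drift correction $\int_{B(g(\bar x),\bar x,r)}(u-u(\bar x))\ps{B}{\nabla u}_{g}\,dV_{g}$, bound it by Cauchy--Schwarz plus a Poincar\'e inequality anchored at the point value $u(\bar x)$, and deduce the lower bound on $\bar F$ from a trace-type estimate (the paper obtains exactly this by applying the divergence theorem to $u^2\,r\partial_r$) combined with the same anchored Poincar\'e inequality. The structure is fine; the problem is the way you propose to prove the anchored Poincar\'e inequality.

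The inequality $\int_{B(g(\bar x),\bar x,r)}(u-u(\bar x))^2\,dV_{g}\le C(n,\lambda)\,r^2 D(r)$ does \emph{not} follow ``by integrating along geodesic rays'' using only the metric comparability of Proposition \ref{prop_gij}: it is false for general $W^{1,2}$ functions, because point evaluation is not controlled by the Dirichlet energy when $n\ge 2$. Concretely, let $v$ vanish on $B_\epsilon(\bar x)$ and equal $1$ outside $B_{2\epsilon}(\bar x)$, radially interpolated; then $\int_{B_r}(v-v(\bar x))^2\approx r^n$ while $\int_{B_r}\abs{\nabla v}^2\approx \epsilon^{\,n-2}\to 0$ for $n\ge 3$ (a logarithmic cutoff gives the same failure for $n=2$). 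In your ray computation this shows up as an uncontrollable weight $s^{-(n-1)}$ near the center, so no constant depending only on $n,\lambda$ can come out. The estimate genuinely uses that $u$ solves \eqref{eq_Lu}: since the equation has no zero-order term, $v=u-\fint_{B_r}u$ is again a solution, so De Giorgi--Nash--Moser local boundedness gives $\abs{u(\bar x)-\fint_{B_r}u}\le C(n,\lambda)\ton{\fint_{B_r}v^2}^{1/2}$, and combining this with the usual mean-anchored Poincar\'e inequality yields $\int_{B_r}(u-u(\bar x))^2\le C(n,\lambda)\,r^2\int_{B_r}\abs{\nabla u}^2$, uniformly in $\bar x$ and in $r$ over the admissible range (Euclidean and geodesic balls being comparable by \eqref{e:coefficient_estimates}). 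The paper invokes ``Poincar\'e's inequality'' tersely at this point, but it does not commit to the ray argument, which is the step that would actually fail. Once the anchored Poincar\'e inequality is established this way, the rest of your argument --- $\abs{I-D}\le C(n,\lambda)\,rD$, absorption for $r\le r_0$, and the trace estimate giving $H(r)\le C(n,\lambda)\,rD(r)\le C(n,\lambda)\,rI(r)$, hence $\bar F(r)\ge c(n,\lambda)$ --- goes through and coincides with the paper's proof.
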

\begin{proof}
 Assume for simplicity that $x=0$ and $u(0)=0$. By definition, we have
\begin{gather}
I(r)= D(r) + \int_{B(r)} u \ps{B\, }{\, \nabla u} dV\, .
\end{gather}
Using H\"older  and Poincaré's inequalities, it is easy to see that there exists a constant 
$C(\lambda)$ for which
\begin{gather}
\abs{ \int_{B(r)} u \ps{B\, }{\, \nabla u} dV }\leq C(\lambda) \sqrt{\int_{B(r)} u^2 dV }
 \cdot D(r)^{1/2}\leq C(\lambda) r D(r)\, .
\end{gather}
Thus, the estimates follow easily.

 For the lower bound on $\bar F$, note that
\begin{gather}
 \int_{\partial B(r)} u^2 dS = \frac 1 r \int_{\partial B(r)} u^2 \ps{\v\,\, }{\, \,\hat n} dS 
= \frac 1 r \int_{B(r)} 2u\ps{\nabla u\, }{\, \vec x} dV + \frac{1}r\int_{B(r)} u^2 \operatorname{div}(\v) dV\, ,
\end{gather}
where $\vec v$ is the Lipschitz vector field $r\partial_r$. By conditions \eqref{e:coefficient_estimates}, 
$\dive\v\leq C(n,\lambda)$, and a simple application of Poincaré's inequality leads to
\begin{gather}
 H(r)\leq c^{-1}(n,\lambda) r D(r) \leq c^{-1}(n,\lambda) r I(r)\, .
\end{gather}
\end{proof}
The frequency function $\bar F$ has invariance properties similar to those which hold for 
harmonic functions. For instance, it is invariant under blow-ups, as long as they are
 redefined in a geodesic sense. The following lemma is the counterpart of Lemma \ref{l_blowup}.
\begin{lemma}\label{lemma_Ninvell}
 Let $u$ be a nonconstant solution to \eqref{eq_Lu}. Fix $x\in B_1(0)$ and the relative 
metric $g_{ij}$ as in Proposition \ref{prop_gij}. Consider the blow up given in geodesic
 polar coordinates centered at $x$ by $(r,\theta)\to (tr,\theta)$. If we define $w(r,\theta)
=\alpha u(tr,\theta)+\beta$ and $g^t_{ij}(r,\theta)=g_{ij}(tr,\theta)$,  then
\begin{gather}
 \bar F(u,x,g,r)=\bar F(w,x,g^t,t^{-1}r)\, .
\end{gather}
\end{lemma}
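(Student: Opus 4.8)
The plan is to exhibit the geodesic blow-up as a Riemannian homothety and then simply track how each of the three functionals $D$, $I$, $H$ rescales. There is no real obstacle here: the statement is a pure change-of-variables identity for the functionals in Definition \ref{deph_LN}, and in particular $w$ need not solve any equation.

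First I would set $\phi_t(r,\theta)=(tr,\theta)$, written in the geodesic polar coordinates of $g$ centered at $x$ furnished by Proposition \ref{prop_gij}. By that proposition $g=dr^2+r^2 b_{st}(x,r,\theta)\,d\theta^s d\theta^t$, and a one-line computation gives
\begin{gather*}
\phi_t^*g=t^2\,dr^2+t^2 r^2\,b_{st}(x,tr,\theta)\,d\theta^s d\theta^t=t^2 g^t\,,
\end{gather*}
with $g^t_{ij}(r,\theta)=g_{ij}(tr,\theta)$ exactly as in the statement. Thus $\phi_t$ is, up to the constant conformal factor $t^2$, an isometry; it carries the geodesic ball $B(g^t,x,s)=\{r\le s\}$ onto $B(g,x,ts)$ and the geodesic sphere $\{r=s\}$ onto $\{r=ts\}$. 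Writing $v=u\circ\phi_t$, so that $w=\alpha v+\beta$ and $w-w(x)=\alpha\,(v-u(x))=\alpha\,(u-u(x))\circ\phi_t$ since $\phi_t(x)=x$, the standard behaviour of gradients, Laplacians and Riemannian measures under $\phi_t$, followed by the constant rescaling $g^t=t^{-2}\phi_t^*g$, gives pointwise
\begin{gather*}
\norm{\nabla v}_{g^t}^2(p)=t^2\norm{\nabla u}_{g}^2(\phi_t p)\,,\qquad \Delta_{g^t}v(p)=t^2\,(\Delta_{g}u)(\phi_t p)\,,\\
dV_{g^t}=t^{-n}\,\phi_t^*(dV_{g})\,,\qquad dS_{g^t}\big|_{\{r=s\}}=t^{-(n-1)}\,\phi_t^*\!\left(dS_{g}\big|_{\{r=ts\}}\right)\,,
\end{gather*}
the last two being immediate from the polar-coordinate expressions for $dV$ and $dS$ already recorded in Definition \ref{deph_LN} and Proposition \ref{prop_gij}.

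Next I would substitute these into the definitions and change variables $q=\phi_t(p)$. Using the ball correspondence $B(g^t,x,s)\leftrightarrow B(g,x,ts)$, the first and third identities yield $D(w,x,g^t,s)=\alpha^2 t^{2-n}D(u,x,g,ts)$; feeding in addition $\Delta_{g^t}w=\alpha t^2(\Delta_g u)\circ\phi_t$ and $w-w(x)=\alpha(u-u(x))\circ\phi_t$ into the integrand of $I$ gives likewise $I(w,x,g^t,s)=\alpha^2 t^{2-n}I(u,x,g,ts)$, while the second and fourth identities, applied on $\{r=s\}\leftrightarrow\{r=ts\}$, give $H(w,x,g^t,s)=\alpha^2 t^{1-n}H(u,x,g,ts)$. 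Inserting these into $\bar F=rI/H$,
\begin{gather*}
\bar F(w,x,g^t,s)=\frac{s\,\alpha^2 t^{2-n}\,I(u,x,g,ts)}{\alpha^2 t^{1-n}\,H(u,x,g,ts)}=\frac{(ts)\,I(u,x,g,ts)}{H(u,x,g,ts)}=\bar F(u,x,g,ts)\,,
\end{gather*}
and specializing to $s=t^{-1}r$ is precisely the claimed identity.

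The only point requiring any care is the bookkeeping of the powers of $t$: they come separately from the conformal rescaling $g^t=t^{-2}\phi_t^*g$, from the volume and surface measures, and from the Jacobian of $\phi_t$, and they must cancel to leave exactly the single factor $t$ that converts $s$ into $ts$ in the definition of $\bar F$. Everything else is routine.
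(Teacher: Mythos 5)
Your proof is correct and is exactly the routine change-of-variables verification that the paper leaves implicit (the lemma is stated without proof, as the elliptic counterpart of the ``easily verified'' Lemma \ref{l_blowup}). Your reading of $g^t$ as $t^{-2}\phi_t^*g$ matches the paper's actual usage (compare the formula $H(U,g^t,0,r)=r^{n-1}\int_{\partial B_1}U^2(r,\theta)\sqrt{b(tr,\theta)}\,d\theta$ in the proof of Theorem \ref{th_Nellmon}), and the powers of $t$ in your scalings of $D$, $I$, $H$ all check out.
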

\paragraph{Blow-up function $U_{x,r}$}
Here we define two auxiliary functions $T_{x,r}$ and $U_{x,r}$ which are generalizations of the
 blow-up function $T_{x,r}$ for harmonic functions. 

Using the geodesic blow-up given in the previous lemma, we introduce the function $U_{x,t}u(y)$
as follows.
\begin{definition}\label{deph_Uell}
 We define
\begin{gather}\label{eq_Tt}
 U_{x,t} u(r,\theta)\equiv \frac{u(tr,\theta)-u(0)}{\ton{\fint_{\partial B(g(0),0,t)} 
[u(r,\theta)-u(0)]^2dS(g)}^{1/2} }\quad \quad  U_{x,t}u(0)=0\, .
\end{gather}
\end{definition}

Note that elliptic regularity ensures that for all $t$, $U_{x,t}u\in W^{2,p}(B_1(0))\cap C^{1,\alpha}(B_1(0))$. 
Moreover, $U_{x,t}$ is normalized in the sense that:
\begin{gather}
 \fint_{\partial B(g(x)^t,0,1)} \abs{U_{x,t}}^2 dS(g(x)^t)=1\, .
\end{gather}
Using a simple change of variables, it is easy to see that $U_t$ satisfies (in the weak sense) the equation
\begin{gather}\label{eq_dt}
 \Delta_{g(x)^t} U_{x,t} = t \ps{B\, }{\, \nabla U_{x,t} u}_{g(x)^t}\, ,
\end{gather}
where $B$ is defined by equation \eqref{eq_B}.

\paragraph{Blow-up function $T_{x,r}$}
For a fixed $x$, let $q^{ij}(x)$ be the square root of the matrix $a^{ij}(x)$, and define
 the linear operator $Q_{x}$ by
\begin{gather}\label{eq_Q}
 Q_{x} (y) = q_{ij}(y-x)^i e_j\, .
\end{gather}
It is evident that, independently of $x$,  $Q_x$ is a bi-Lipschitz equivalence from $\R^n$ 
to itself with Lipschitz constant $(1+\lambda)^{1/2}$. Moreover, note that the ellipsoid
 $\|Q_x(y)\|\leq r$ 
 is exactly the geodesic ball $B(g(x),x,r)$, where $g(x)$ is the metric
 introduced in Proposition \ref{prop_gij}.

\begin{definition}\label{deph_Tell}
Define the function $T_{x,t}:B_1(0)\to \R$ by
\begin{gather}
 T_{x,t}(y) = \frac{u(x + t Q_{x}^{-1}(y))-u(x)}{\ton{\int_{\partial B_1} [u(x + t Q_{x}^{-1}(y))-u(x)]^2  
 dS}^{1/2}}\, .
\end{gather}
\end{definition}
Using a simple change of variables, it is easy to see that the function $T$ satisfies an 
elliptic PDE of the form:
\begin{gather}
 \tilde \L(u)=\partial_i\ton{\tilde a^{ij} \partial _j T} + \tilde b^i \partial_i T =0\, ,
\end{gather}
with $\tilde a^{ij}(x)=\delta^{ij}$. Moreover, as long as $t\leq 1$, condition \eqref{eq_cond} implies a similar estimate for the coefficients $\tilde a^{ij}, \ \tilde b^i$:
\begin{gather}\label{eq_U}
 \norm{\tilde a^{ij}}_{C^M(B_1)}, \ \norm{\tilde b^{i}}_{C^M(B_1)}\leq C(n,\lambda, L)\, .
\end{gather}
Thus, on $B_1(0)$ we have uniform elliptic estimates on $T_{x,t} u(y)$ for $x\in B_{1/2}(0)$ and $t\leq (1+\lambda)^{-1}/3$.

Note in addition that as $t$ converges to $0$, $U_{x,t}$ converges to $T_{x,t}$ in $C^{0,1}(B_1(0))$.

\paragraph{Almost monotonicity}
By an argument that is philosophically identical to the one for harmonic functions, although technically more complicated, we show that this modified frequency is almost monotone in the following sense.
\begin{theorem}\label{th_Nellmon}
 Let $u:B_1(0)\to \R$ be a nonconstant solution to equation \eqref{eq_Lu} with \eqref{e:coefficient_estimates} and let $x\in B_{1/2}(0)$. Then there exists a positive $r_0=r_0(\lambda)$ and a constant $C=C(n,\lambda)$ such that
\begin{gather}
 e^{C r} \bar F(r)\equiv  e^{C r} \bar F(u,x,g(x),r)
\end{gather}
is monotone nondecreasing on $(0,r_0)$.
\end{theorem}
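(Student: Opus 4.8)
The plan is to compute the logarithmic derivative of $\bar F$ and to bound it below by some constant $-C(n,\lambda)$; since $\bar F$ is locally Lipschitz in $r>0$, it is enough to work at points of differentiability. Writing $\bar F(r)=rI(r)/H(r)$,
\begin{gather}
\frac{d}{dr}\log\ton{e^{Cr}\bar F(r)}=C+\frac 1r+\frac{I'(r)}{I(r)}-\frac{H'(r)}{H(r)}\, ,
\end{gather}
so I would reduce the theorem to proving $\frac{I'(r)}{I(r)}-\frac{H'(r)}{H(r)}\geq -\frac 1r-C(n,\lambda)$ for $r\leq r_0$. Here I use freely that, by Proposition \ref{prop_ID}, one has $\bar F(r)\geq c(n,\lambda)>0$, $D(r)\leq C(\lambda)I(r)$ and $H(r)\leq C(n,\lambda)rI(r)$ on this range; in particular $I(r)>0$, so dividing by $I(r)$ is legitimate, and any error term of size $C(n,\lambda)D(r)$, $C(n,\lambda)r^{-1}H(r)$, or $C(n,\lambda)r^{-1}\int_{B(g,x,r)}(u-u(x))^2\,dV_g$ is bounded by $C(n,\lambda)I(r)$, using in the last case also the Poincaré inequality on geodesic balls. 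Throughout, $g=g(x)$ is the metric of Proposition \ref{prop_gij}, which is only Lipschitz, so every identity below produces error terms that must be controlled by means of $\abs{\partial_r b_{st}}\leq C(\lambda)$ from \eqref{eq_bst} and $\abs{B}_g\leq C(\lambda)$.

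For the boundary quantity $H$, I would differentiate the geodesic polar expression $H(r)=r^{n-1}\int_{\partial B_1}[u(r,\theta)-u(x)]^2\sqrt{b(x,r,\theta)}\,d\theta$ under the integral sign; the term coming from $\partial_r\sqrt b$ is $O(1)H(r)$ by \eqref{eq_bst}, and since in these coordinates the outward unit normal derivative is $\partial_r$ one obtains $H'(r)=\frac{n-1}{r}H(r)+2\int_{\partial B(g,x,r)}(u-u(x))\,\partial_\nu u\,dS_g+O(1)H(r)$. On the other hand, applying the divergence theorem on $B(g,x,r)$ to the vector field $(u-u(x))\nabla u$ and using the equation in the form $\Delta_g u=\ps{B}{\nabla u}_g$ gives $I(r)=\int_{\partial B(g,x,r)}(u-u(x))\,\partial_\nu u\,dS_g$, and therefore $\frac{H'(r)}{H(r)}=\frac{n-1}{r}+\frac{2I(r)}{H(r)}+O(1)$.

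For the interior quantity $I$, I would prove a Rellich--Pohozaev type identity: contracting the equation against the radial field $r\partial_r$ and integrating by parts on $B(g,x,r)$ --- exactly as in the proof of the flat identity $D'(r)=\frac{n-2}{r}D(r)+2\int_{\partial B_r}(\partial_\nu u)^2$ for harmonic functions --- while now keeping track of the contributions of $\partial_r b_{st}$, of the first-order term $B$, and of the discrepancy between $I$ and $D$. The target inequality is $I'(r)\geq\frac{n-2}{r}I(r)+2\int_{\partial B(g,x,r)}(\partial_\nu u)^2\,dS_g-C(n,\lambda)I(r)$ for $r\leq r_0$, after shrinking $r_0$ if necessary so that the boundary error, which is of order $r\,\abs{\partial_r b}\int_{\partial B}(\partial_\nu u)^2$, is absorbed into the good term $2\int_{\partial B}(\partial_\nu u)^2$. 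I expect this step to be the main obstacle: carrying out the Rellich--Pohozaev computation in the merely Lipschitz metric $g$ and checking that every error term produced by the non-smoothness of $g$ and by $B$ is genuinely of order $C(n,\lambda)I(r)$ and, in particular, does not degenerate as $r\to 0$, is the technical heart of the argument and the only place where the precise form of $g$ in geodesic polar coordinates (Proposition \ref{prop_gij}) and the bound $\abs{B}_g\leq C(\lambda)$ are really used.

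Finally I would combine the two formulas via Cauchy--Schwarz on $\partial B(g,x,r)$: since $I(r)^2=\ton{\int_{\partial B(g,x,r)}(u-u(x))\,\partial_\nu u\,dS_g}^2\leq H(r)\int_{\partial B(g,x,r)}(\partial_\nu u)^2\,dS_g$, the good term obeys $2\int_{\partial B(g,x,r)}(\partial_\nu u)^2\,dS_g\geq 2I(r)^2/H(r)$, hence $\frac{I'(r)}{I(r)}\geq\frac{n-2}{r}+\frac{2I(r)}{H(r)}-C(n,\lambda)$. Subtracting the expression for $H'(r)/H(r)$, the $\frac{2I(r)}{H(r)}$ terms cancel and one is left with $\frac{I'(r)}{I(r)}-\frac{H'(r)}{H(r)}\geq\frac{n-2}{r}-\frac{n-1}{r}-C(n,\lambda)=-\frac 1r-C(n,\lambda)$, which is precisely what was required; taking $C=C(n,\lambda)$ large enough in the first display shows $e^{Cr}\bar F(r)$ is monotone nondecreasing on $(0,r_0)$.
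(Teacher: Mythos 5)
Your overall architecture matches the paper's: differentiate $H$ and $I$, prove a Rellich--Pohozaev identity for the gradient part, and cancel the $2I/H$ terms via Cauchy--Schwarz; your $H'$ computation and the identity $I(r)=\int_{\partial B(g,x,r)}(u-u(x))\,\partial_\nu u\,dS_g$ are exactly the paper's. The gap is in your ``target inequality'' $I'(r)\geq\tfrac{n-2}{r}I(r)+2\int_{\partial B_r}(\partial_\nu u)^2\,dS_g-C(n,\lambda)I(r)$. Since $I(r)=\int_{B_r}\norm{\nabla u}_g^2+(u-u(x))\Delta_g u\,dV_g$, its derivative contains, besides the Rellich--Pohozaev part, the boundary term
\begin{gather*}
I'_\beta(r)=\int_{\partial B(g,x,r)}(u-u(x))\ps{B}{\nabla u}_g\,dS_g\, ,
\end{gather*}
which is not among the errors you list: it is neither $O(D)$, nor $O(r^{-1}H)$, nor a metric-perturbation term of order $r\abs{\partial_r b}\int(\partial_\nu u)^2$. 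The only generic bound is $\abs{I'_\beta}\leq C(\lambda)H^{1/2}\ton{\int_{\partial B_r}\abs{\nabla u}^2_g\,dS_g}^{1/2}\leq C\,H^{1/2}\ton{\int_{\partial B_r}(\partial_\nu u)^2}^{1/2}$, and Cauchy--Schwarz gives $H^{1/2}\ton{\int(\partial_\nu u)^2}^{1/2}\geq I$ with no reverse inequality, so $\abs{I'_\beta}$ cannot be absorbed into $C(n,\lambda)I(r)$ when Cauchy--Schwarz on $\partial B_r$ is far from equality. You flag the Rellich--Pohozaev step as the main obstacle but attribute the difficulty to the Lipschitz regularity of $g$; the real obstruction is this term coming from the first-order coefficient $B$, i.e.\ from the discrepancy between $I$ and $D$.

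Nor can the problem be fixed by absorbing $I'_\beta$ multiplicatively into the good term: one does have $\abs{I'_\beta}\leq C(\lambda)c^{-1}\,r\int_{\partial B_r}(\partial_\nu u)^2$ (combine $H\leq c^{-1}rI$ with $I\leq H^{1/2}(\int(\partial_\nu u)^2)^{1/2}$ to get $H\leq c^{-2}r^2\int(\partial_\nu u)^2$), but your final step then retains only $(2-\delta)\int(\partial_\nu u)^2\geq(2-\delta)I^2/H$ with $\delta\sim r$, and after subtracting $H'/H$ the leftover is $-\delta I/H=-\delta\bar F/r\sim-C\bar F$, which is not bounded below by a constant because $\bar F$ is not a priori bounded --- its boundedness is precisely what almost monotonicity is supposed to deliver. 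The paper resolves this with the dichotomy \eqref{eq_case1}/\eqref{eq_case2}: if $H\int(\partial_\nu u)^2\leq 2I^2$ then $\abs{I'_\beta}\leq CI$ directly and your cancellation with the full coefficient $2$ goes through; otherwise Young's inequality gives $\abs{I'_\beta}\leq\int(\partial_\nu u)^2+C I$, half of the good term is sacrificed, and the strict surplus $\int(\partial_\nu u)^2-2I^2/H>0$ from the case hypothesis supplies the missing half. Some such case analysis (or an equivalent optimization over the ratio $H\int(\partial_\nu u)^2/I^2$, using that $I/H\geq c/r$ is large) is indispensable; as written your proof does not close. The remaining differences --- the paper first smooths the coefficients by density and performs a geodesic blow-up to scale $1$, whereas you work at points of differentiability of the locally Lipschitz $\bar F$ --- are cosmetic.
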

\begin{proof}
By a standard $C^{1,\alpha}$ density argument, we can assume that $a^{ij}$ and $b^i$ are smooth. Indeed, there exists a sequence of smooth solutions to elliptic pdes with smooth coefficients that converge in the $C^{1,\alpha}$ sense to $u$.
Moreover, for simplicity we assume $x=0$ and $u(0)=0$. We will prove that, for $r\in (0, r_0)$:
\begin{gather}\label{eq_dN}
 \frac{\bar F'(r)}{\bar F(r)}\geq -C(n,\lambda)\, .
\end{gather}
Define $U_{t} u=U_{0,t} u$ as in \eqref{eq_Tt}. Using Lemma \ref{lemma_Ninvell}, the last statement is equivalent to
\begin{gather}
 \frac{\bar F'_t(1)}{\bar F_t(1)}\equiv \frac{\bar F'(U_tu,g^t,0,1)}{\bar F(U_tu,g^t,0,1)}\geq -C(n,\lambda)t\, .
\end{gather}

For the moment, fix $t$ and set $U=U_t u$. We begin by computing the derivative of $H$.  We have,
\begin{gather}
 H(r)=H(U,g^t,0,r)=r^{n-1}\int_{\partial B_1} U^2(r,\theta) \sqrt{b(tr,\theta)}d\theta\, ,\\
H'|_{r=1}=(n-1) H(1) +  2\int_{\partial B_1} U\ps{\nabla U\, }{\, \nabla r}\sqrt{b(t,\theta)}d\theta+\notag \int_{\partial B_1}\ton{\frac t 2 \left.\frac{\partial \log(b)}{\partial r}}\right\vert_{(tr,\theta)} U^2(1,\theta) \sqrt{b(t,\theta)}d\theta\, .
\end{gather}
By using equation \eqref{eq_bst}, we obtain the estimate 
\begin{gather}\label{eq_N1}
 \abs{H'(1)- (n-1)  H(1) -  2\int_{\partial B(g^t,0,1)} U U_ndS(g^t)}\leq C(n,\lambda) t \ H(1)\, ,
\end{gather}
where $U_n=\ps{\nabla U}{\partial_r}$ is the normal derivative of $U$ on $\partial B(g^t,0,r)$. As for the derivative of $I$, we split it into two parts:
\begin{equation}\label{eq_N2}
\begin{aligned}
 I'=\frac{d}{dr}I(U,g^t,r)&=\int_{\partial B(g^t,0,r)}\ton{\norm{\nabla U}_{g^t}^2+ U\Delta_{g^t} (U )}dS(g^t)\\
&= \int_{\partial B(g^t,0,r)}\norm{\nabla U}_{g^t}^2dS(g^t)+\int_{\partial B(g^t,0,r)} U\Delta_{g^t} (U )dS(g^t)\\
&\equiv I'_\alpha + I'_\beta\, .
\end{aligned}
\end{equation}
Using geodesic polar coordinates relative to $g^t$, set $\v=r \nabla r$. By the divergence theorem we get 
\begin{equation}
\begin{aligned}
I'_\alpha &= \frac 1 r \int_{\partial B(g^t,0,r)} \norm{\nabla U}_{g^t}^2\ps{\vec v\, }{\, r^{-1}\vec v}dS(g^t)
=\frac 1 r \int_{B(g^t,0,r)} \dive{\norm{\nabla U}_{g^t}^2\vec v }dV(g^t)\\
&=\frac 1 r \int_{B(g^t,0,r)}\norm{\nabla U}_{g^t}^2 \dive{\vec v }dV(g^t)
+ \frac 2 r \int_{B(g^t,0,r)} \nabla^i\nabla^j U\, \nabla_i U\,  \vec v _j\, dV(g^t)\\
 &=\frac 1 r \int_{B(g^t,0,r)}\norm{\nabla U}_{g^t}^2 \dive{\vec v }dV(g^t)
+\frac 2 r \int_{B(g^t,0,r)} \ps{\nabla \ps{\nabla U\,}{\, \vec v}\, }{\, \nabla U} dV(g^t)
 - \frac 2 r \int_{B(g^t,0,r)} \nabla^j U\nabla_i U \ton{\nabla^i\vec v }_j dV(g^t)\\
&=\frac 1 r \int_{B(g^t,0,r)}\norm{\nabla U}_{g^t}^2 \dive{\vec v }dV(g^t) 
+ 2 \int_{\partial B(g^t,0,r)} \ton{U_n}^2 dS(g^t) \\
&{}\qquad-\frac 2 r\int_{B(g^t,0,r)} t\ps{\nabla U\, }{\, \vec v}\ps{B}{\nabla U} dV(g^t)
-  \frac 2 r \int_{B(g^t,0,r)} \nabla^j U\nabla_i U \ton{\nabla^i\vec v }_j dV(g^t)\, .
\end{aligned}
\end{equation}
Using geodesic polar coordinates, it is easy to see that
\begin{gather}
 \left.\abs{\ton{\nabla^i \vec v}_j -\delta^i_j}\right\vert_{(r,\theta)}\leq r t C(\lambda)\, .
\end{gather}
Therefore,  we have the estimate
\begin{gather}
 \abs{I'_\alpha(1)- (n-2) D(1) - 2 \int_{\partial B(g^t,0,1)} \ton{U_n}^2 dS(g^t)}\leq t C(n,\lambda) D(1)\, .
\end{gather}
Using Proposition \ref{prop_ID} we conclude that for $t\leq r_0=r_0(\lambda)$,
\begin{gather}\label{eq_I}
 \abs{I'_\alpha(1)-(n-2) I(1)- 2 \int_{\partial B(g^t,0,1)}\ton{U_n}^2 dS(g^t)}\leq t C(n,\lambda) I(1)\, .
\end{gather}

To estimate $I'_\beta$, we use the divergence theorem to write
\begin{gather}
 I(r)=\int_{\partial B(g^t,0,r)} U U_n dS(g^t)\, .
\end{gather}
Note that for $tr\leq r_0$, $I(r)>0$.
 From Cauchy's inequality and Proposition \ref{prop_ID}, we get
\begin{gather}
 \notag  I^2(r)\leq H(r)\int_{\partial B(g^t,0,r)} U_n^2 dS(g^t)
\leq \frac{rI(r)}{c(n,\lambda)}\int_{\partial B(g^t,0,r)} U_n^2 dS(g^t)\, ,\\
\!\!\!\!\!\!\!\!\!\!\!\!\!\!\!\!\!\!\!\!\!\!\!\!\!\!\!\!\!\!\!\!\!\!\!\!\!\!\!\!\!\!\!\!\!\!\!\!\!\!\!\!\!\!\!\!\!\!
 I(r)\leq \frac{r}{c(n,\lambda)}\int_{\partial B(g^t,0,r)} U_n^2 dS(g^t)\, ,
\end{gather}
and so, using equation \eqref{eq_I}, we get
\begin{gather}\label{eq_cau}
 \int_{\partial B(g^t,0,1)} \norm{\nabla U}^2_{g^t} dS(g^t)= I'_\alpha(1) \leq C(n,\lambda)
 \int_{\partial B(g^t,0,1)} U_n^2 dS(g^t)\, .
\end{gather}
 Following \cite[pag 56]{hanlin}, we divide the rest of the proof in two cases.
\paragraph{Case 1.}
Suppose 
\begin{gather}\label{eq_case1}
 \int_{\partial B(g^t,0,1)} U^2 dS(g^t) \ \int_{\partial B(g^t,0,1)} U_n^2 dS(g^t)\leq 2 
\ton{\int_{\partial B(g^t,0,1)} UU_n dS(g^t)}^2= 2I^2(1)\, .
\end{gather}
In this case, using Cauchy's inequality and \eqref{eq_cau}, we have the estimate
\begin{gather}\label{eq_N3}
\abs{ I'_\beta(1)}= \abs{\int_{\partial B(g^t,0,1)} t U \ps{B}{\nabla U}dS(g^t)} \leq tC(n,\lambda) I(1)\, .
\end{gather}
So, from equations \eqref{eq_N1}, \eqref{eq_N2}, \eqref{eq_I} and \eqref{eq_N3}, we get for $t\leq r_0$,
\begin{gather}
\notag \frac{\bar F_t'(1)}{\bar F_t(1)}= 1 + \frac{I'(1)}{I(1)} - \frac{H'(1)}{H(1)}
\geq 0 + 2\ton{\frac{\int_{\partial B(g^t,0,1)}  U_n^2dS(g^t)}{\int_{\partial B(g^t,0,1)}  UU_ndS(g^t)}
-\frac{\int_{\partial B(g^t,0,1)}  UU_ndS(g^t)}{\int_{\partial B(g^t,0,1)}  U^2dS(g^t)}} - tC(n,\lambda)\geq 
- tC(n,\lambda)\, ,
\end{gather}
where the last inequality comes from a simple application of Cauchy's inequality.
\paragraph{Case 2.}
To complete the proof, suppose
\begin{gather}\label{eq_case2}
 \int_{\partial B(g^t,0,1)} U^2 dS(g^t) \ \int_{\partial B(g^t,0,1)} U_n^2 dS(g^t)
>2 \ton{\int_{\partial B(g^t,0,1)} UU_n dS(g^t)}^2= 2I^2(1)\, .
\end{gather}
Then we have the following estimate for estimate $I'_\beta$.
\begin{gather}
\abs{ I'_\beta(1)}= \abs{\int_{\partial B(g^t,0,1)} t U \ps{B}{\nabla U}dS(g^t)}
 \leq t \ton{\int_{\partial B(g^t,0,1)}  U^2dS(g^t)\int_{\partial B(g^t,0,1)} \norm{\nabla U}_{g^t}^2dS(g^t)}^{1/2}
\leq \\
\notag \!\!\!\!\!\!\!\!\!\!\!\!\!\!\!\!\!\!\!\!\!\!\!\!\!\!\!\!\!\!\!\!\!\!\!\!\!\!\!\!\!\!\!\!\!\!\!\!\!\!\!\!\!\!\!\!
\!\!\!\!\!\!\!\!\!\!
\leq C(n,\lambda)t\ton{\int_{\partial B(g^t,0,1)}  U^2dS(g^t)\int_{\partial B(g^t,0,1)} U_n^2dS(g^t)}^{1/2}\, .
\end{gather}
Applying Young's inequality with the right constant and Proposition \ref{prop_ID}, we obtain that for $t\leq r_0$,
\begin{gather}\label{eq_N4}
\abs{ I'_\beta(1)}\leq \int_{\partial B(g^t,0,1)} U_n^2dS(g^t) + C(n,\lambda)t^2 \int_{\partial B(g^t,0,1)}  U^2dS(g^t)
 \leq \int_{\partial B(g^t,0,1)} U_n^2dS(g^t) + C(n,\lambda)t^2 I(1)\, .
\end{gather}
Using equations \eqref{eq_N1}, \eqref{eq_N2}, \eqref{eq_I} and \eqref{eq_N4}, we get for $t\leq r_0$,
\begin{gather}
\frac{\bar F_t'(1)}{\bar F_t(1)}= 1 + \frac{I'(1)}{I(1)} - \frac{H'(1)}{H(1)}\geq 0 
+ {\frac{\int_{\partial B(g^t,0,1)}  U_n^2dS(g^t)}{\int_{\partial B(g^t,0,1)}  UU_ndS(g^t)
}-\frac{2\int_{\partial B(g^t,0,1)}  UU_ndS(g^t)}{\int_{\partial B(g^t,0,1)}  U^2dS(g^t)}} - tC(n,\lambda)\geq - tC(n,\lambda)\, ,
\end{gather}
where the last inequality follows directly from the assumption \eqref{eq_case2}.
\end{proof}

For the proof of Theorem \ref{th_main_proof}, Lemma \ref{lemma_strong2bar} is crucial.
 It states that a bound on $\bar N^u(0,1)$ gives a bound also on $\bar N^u(x,r)$, for 
well-chosen $x$ and $r$. A similar statement holds for solutions to \eqref{eq_Lu}. 
However this statement is valid only for $r\leq r_0(n,\lambda,\Lambda)$.
\begin{lemma}\label{lemma_prestrongell}
 There exists $r_0=r_0(n,\lambda,\Lambda)$ and $C=C(n,\lambda,\Lambda)$ such that if 
$u$ is a solution to \eqref{eq_Lu} with \eqref{e:coefficient_estimates} on 
$B_{{\lambda}^{-1/2}r}(0)$, $0<r\leq r_0$ and $\bar F(0,r)\leq \Lambda$, then for all $x\in B_{r/3}(0)$,
 \begin{gather}
  \bar F(x,r/3)\leq C\, .
 \end{gather}
 \end{lemma}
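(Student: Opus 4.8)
The plan is to imitate the proof of Lemma~\ref{lemma_strong2bar}, with the monotonicity of $\bar N$ replaced by the almost monotonicity of $e^{Cr}\bar F$ (Theorem~\ref{th_Nellmon}) and the doubling \eqref{eq_dou} replaced by a doubling for $H(u,0,g(0),\cdot)$ derived from it. After normalizing $u(0)=0$ by Lemma~\ref{lemma_Ninvell} and reducing, by a covering and scaling argument exactly as in Lemma~\ref{lemma_strong2bar}, to the case $|x|\le\delta_0 r$ for a small $\delta_0=\delta_0(n,\lambda,\Lambda)$, one keeps $r\le r_0=r_0(n,\lambda,\Lambda)$ small enough that Theorem~\ref{th_Nellmon}, Proposition~\ref{prop_ID} and Proposition~\ref{prop_gij} apply on all the finitely many, $\lambda$-comparable, geodesic balls that appear. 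First, Theorem~\ref{th_Nellmon} at the origin together with the hypothesis gives $\bar F(u,0,g(0),s)\le C(n,\lambda)\Lambda=:\Lambda_1$ for $s\le r$, while Proposition~\ref{prop_ID} gives $\bar F(u,0,g(0),s)\ge c(n,\lambda)>0$ for $s\le r_0$. Since the computation of $H'$ in the proof of Theorem~\ref{th_Nellmon} yields $\big|\,s\,H'(u,0,g(0),s)/H(u,0,g(0),s)-(n-1)-2\bar F(u,0,g(0),s)\,\big|\le C(n,\lambda)s$, integrating gives a two-sided doubling for $H(u,0,g(0),\cdot)$ on $(0,r_0]$ with exponents between $n-1+2c(n,\lambda)$ and $n-1+2\Lambda_1$; by the co-area formula this also controls $\int_{B(g(0),0,s)}u^2=\int_0^s H(u,0,g(0),\rho)\,d\rho$ in both directions. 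The same computation, using only the lower bound of Proposition~\ref{prop_ID} and hence no frequency hypothesis, gives the one-sided estimate $\int_{B(g(x),x,s)}(u-u(x))^2\le C(n,\lambda)\,s\,H(u,x,g(x),s)$, valid at $x$ itself.

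For the numerator of $\bar F(u,x,g(x),r/3)$, the $\lambda$-bi-Lipschitz maps $Q_x,Q_0$ and the $\lambda$-comparability of $g(x),g(0)$ give $B(g(x),x,r/3)\subseteq B(g(0),0,C_2 r)$ with $C_2=C_2(\lambda)$ (admissible since $\delta_0$ is small), so by Proposition~\ref{prop_ID}, $\tfrac r3 I(u,x,g(x),\tfrac r3)\le C(\lambda)\tfrac r3 D(u,x,g(x),\tfrac r3)\le C(n,\lambda)\,r\,D(u,0,g(0),C_2 r)\le C(n,\lambda)\,r\,I(u,0,g(0),C_2 r)$, and since $r\,I(u,0,g(0),C_2 r)=C_2^{-1}\bar F(u,0,g(0),C_2 r)H(u,0,g(0),C_2 r)$, the first step bounds this by $C(n,\lambda,\Lambda)H(u,0,g(0),r)$.

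The heart of the argument is the lower bound $H(u,x,g(x),r/3)\ge c(n,\lambda,\Lambda)H(u,0,g(0),r)$. By the $x$-centered estimate above and co-area, $H(u,x,g(x),\tfrac r3)\ge \tfrac{c(n,\lambda)}{r}\int_{B(g(x),x,r/3)}(u-u(x))^2\ge \tfrac{c(n,\lambda)}{r}\int_{B(g(0),0,c_1 r)}(u-u(x))^2$, where $c_1=c_1(\lambda)$ is fixed so that $B(g(0),0,c_1 r)\subseteq B(g(x),x,r/3)$ for $|x|\le\delta_0 r$. Writing $(u-u(x))^2\ge\tfrac12 u^2-u(x)^2$ and using the solid doubling of the first step, the last integral is $\ge c(n,\lambda,\Lambda)\,c_1^{\,n+2\Lambda_1}\,r\,H(u,0,g(0),r)-C(n,\lambda)\,c_1^{\,n}\,r^n\,u(x)^2$. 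The crucial point is to estimate $u(x)^2$ by the interior $L^\infty$-bound for \eqref{eq_Lu} \emph{at the small scale} $\delta_0 r$: $u(x)^2\le\sup_{B_{\delta_0 r}(0)}u^2\le C(n,\lambda)(\delta_0 r)^{-n}\int_{B_{2\delta_0 r}(0)}u^2\le C(n,\lambda,\Lambda)\,\delta_0^{\,2c(n,\lambda)}\,r^{1-n}\,H(u,0,g(0),r)$, the last step using the solid doubling at scale $\delta_0 r$ --- the extra power $\delta_0^{\,2c(n,\lambda)}$ comes precisely from the lower bound $\bar F\ge c(n,\lambda)$ of Proposition~\ref{prop_ID}. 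Thus the error term is $\le C(n,\lambda,\Lambda)\,\delta_0^{\,2c(n,\lambda)}\,c_1^{\,n}\,r\,H(u,0,g(0),r)$, at most half the main term once $\delta_0$ is chosen small in terms of $c_1$; as the inclusion only bounds $c_1$ above by a $\lambda$-constant and this estimate only bounds it below by a power of $\delta_0$, such a choice exists. Dividing numerator by denominator then gives $\bar F(u,x,g(x),r/3)\le C(n,\lambda,\Lambda)$, and the covering/scaling reduction upgrades $|x|\le\delta_0 r$ to $x\in B_{r/3}(0)$.

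The main obstacle is this last step: in the harmonic case the mean value property supplies $u(x)^2\le\fint_{\partial B_\rho(x)}u^2$ and one plugs it straight into \eqref{eq_dou}, but for \eqref{eq_Lu} there is no such identity and $u(x)$ must be controlled on its own; the substitute above --- comparing $(u-u(x))^2$ with $u^2$ on a small $g(0)$-ball and absorbing the difference through the interior $L^\infty$-bound at scale $\delta_0 r$ together with the $H$-doubling --- succeeds only because of the positive power of $\delta_0$ it generates. The remaining work is bookkeeping of the $\lambda$-dependent constants from comparing $g(x)$ with $g(0)$ and nesting the corresponding geodesic balls, which is exactly what necessitates the preliminary covering and scaling reduction.
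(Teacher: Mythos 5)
Your proposal takes a genuinely different route from the paper: the paper proves Lemma~\ref{lemma_prestrongell} by a contradiction/compactness argument (rescale, let the metrics converge to the Euclidean one and the blow-ups $U_i$ converge to a nonconstant harmonic function, and derive the contradiction from Lemma~\ref{lemma_strong2bar}), and it explicitly remarks that a direct quantitative argument via doubling ``might be possible'' but is deliberately avoided. Your direct argument is therefore more ambitious, and its central new ingredient --- replacing the mean value identity $\fint_{\partial B_\rho(x)}(u-u(x))^2=\fint_{\partial B_\rho(x)}u^2-u(x)^2$ by the interior $L^\infty$ bound at scale $\delta_0 r$ combined with the solid doubling of $H(u,0,g(0),\cdot)$, so that the error carries the factor $\delta_0^{2c(n,\lambda)}$ --- is sound and does establish the conclusion for $|x|\le\delta_0 r$ with $\delta_0=\delta_0(n,\lambda,\Lambda)$ small.

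However, there is a genuine gap in the last step, the upgrade from $|x|\le\delta_0 r$ to $x\in B_{r/3}(0)$. The covering argument in Lemma~\ref{lemma_strong2bar} is not available to you in the form you invoke it: there the \emph{base case} already handles a macroscopic displacement ($|x|\le 1/4$ at scale $1/2$), which is possible because the exact mean value property makes the $u(x)^2$ correction harmless at any displacement, and the non-normalized bound $N(x,1/2)\le C$ at displaced centers is supplied by \cite[Theorem 2.2.8]{hanlin}. Your base case only reaches $|x|\le\delta_0 r$, and $\delta_0$ is forced to be small (indeed exponentially small in $\Lambda_1$, since absorption requires $\delta_0^{2c(n,\lambda)}\lesssim c_1^{2\Lambda_1}$). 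Iterating the statement ``if $\bar F(y,s)\le\Lambda'$ then $\bar F(y',s/3)\le C(\Lambda')$ for $|y'-y|\le\delta_0(\Lambda')\,s$'' moves the center by at most $\delta_0 s_k$ at the $k$-th step while the scale shrinks geometrically, so the total reachable displacement is $O(\delta_0 r)$ --- the chain never escapes $B_{2\delta_0 r}(0)$, and meanwhile the scale collapses to zero. Nor can the base case itself be run at macroscopic $|x|$: there the $L^\infty$ bound gives only $u(x)^2\le C r^{1-n}H(u,0,g(0),r)$ with no gain of a power of $\delta_0$, so the error term $C c_1^{n} r H$ is not dominated by the main term $c\, c_1^{n+2\Lambda_1} r H$ (recall $c_1<1$ and $\Lambda_1$ is large). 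To close the argument you would need a separate mechanism for macroscopic displacement --- e.g.\ first propagating a bound on the \emph{non-normalized} generalized frequency along a chain of comparable balls, and only then correcting for the subtraction of $u(x)$ --- or else fall back on the compactness argument the paper actually uses.
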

\begin{remark}
 Even though it might be possible to prove this lemma using doubling conditions for 
$H(r)$ and mean value theorems, it is much more convenient to set up a 
contradiction/compactness argument. Such an argument does not give explicit
 quantitative control on the constants $C$ and $r_0$. Rather, it only proves their existence. 
 For our purposes, this is sufficient.
\end{remark}
\begin{proof}
 Assume, by contradiction, that there exists
a sequence of solutions $u_i$ to $\L_i(u_i)=0$, where the operators $\L_i$ satisfy 
conditions \eqref{e:coefficient_estimates}. Assume also that $\bar F(u_i,0,,g^i(0),i^{-1})
\leq \Lambda$, but for some $x_i\in B_{i^{-1}/3}(0)$, $\bar F(u_i,x_i,g^i(x_i),i^{-1}/3)\geq i$. 
For each operator $\L_i$, consider the associated metric $g$ at the origin and define $g^i(r,\theta)=g(i^{-1}r,\theta)$.
 An easy consequence of the conditions \eqref{e:coefficient_estimates} is that $g^i(r,\theta)$ converges
 in the Lipschitz sense on $B_1(0)$ to the Euclidean metric.

For simplicity, set $U_i(r,\theta)= U_{0,i^{-1}} u_i(r,\theta)$, where the latter is defined in equation \eqref{eq_Tt}.

The bound on the frequency $\bar F$ together with Lemma \ref{prop_ID} implies that, for $i$ large enough,
\begin{gather}
 \int_{B_1}\abs{\nabla U_i}^2 dV \leq \lambda^{\frac{n-2}2} \int_{B_1(0)} \norm{\nabla U_i}^2_{g^i}dV(g^i)
\leq C(n,\lambda)\bar F (0,i^{-1}) \leq C(n,\lambda)\Lambda\, .
\end{gather}
Since $U_i(0)=0$, $U_i$ have uniform bound in the $W^{1,2}(B_1(0))$ norm and, by elliptic estimates, 
also in the $C^{1,\alpha}(B_{2/3})$ norm.

Consider a subsequence $U_i$ which converges in the weak $W^{1,2}$ sense to some $U$, and a 
subsequence of $x_i$ converging to some $x\in \overline B_{1/3}$. It is easy to see that $U$ is a 
nonconstant harmonic function, and, by the convergence properties of the sequence $U_i$, we also have
\begin{gather}
 \lim_{i\to \infty} \bar F(U_i,0,g^i(0),1) = \bar F(U,0,e,1)=\bar N^U (0,1)\,  ,\\
 \lim_{i\to \infty} \bar F(U_i,x_i,g^i(x_i),1/3) = \bar F(U,x,e,1/3)=\bar N^U (x,1/3)\, .
\end{gather}
Recall that $e$ is the standard Euclidean metric on $\R^n$. The contradiction is a consequence of Lemma \ref{lemma_strong2bar}.
\end{proof}

With a standard compactness argument, we can turn the previous lemma into the following statement.
\begin{lemma}\label{lemma_Nec}
 Let $u:B_1(0)\to \R$ be a nonconstant solution to \eqref{eq_Lu} with \eqref{e:coefficient_estimates}. 
Then there exist constants $r_1(n,\lambda,\Lambda)$ and $C(n,\lambda,\Lambda)$ such that
 if $\bar N^u(0,1)\leq \Lambda$, then for all $x\in B_{1/2}(0)$ and $r\leq r_1$
\begin{gather}
 \bar F(u,x,r)\leq C(n,\lambda,\Lambda)\, .
\end{gather}
\end{lemma}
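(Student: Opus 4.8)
The plan is to reduce, via the almost monotonicity of Theorem \ref{th_Nellmon}, to a bound on $\bar F$ at a single fixed small scale, and then to establish that bound by a contradiction/compactness argument in the spirit of the proof of Lemma \ref{lemma_prestrongell}. First I would fix $r_*=r_*(n,\lambda)>0$ small enough that $r_*<r_0(\lambda)$, the almost monotonicity radius of Theorem \ref{th_Nellmon}, and that the geodesic ball $B(g(x),x,r_*)$ (for the frozen metric of Proposition \ref{prop_gij}) is relatively compact in $B_1(0)$ for every $x\in \overline{B_{1/2}(0)}$. By Theorem \ref{th_Nellmon}, for $x\in B_{1/2}(0)$ and $0<r\leq r_*$ we have $\bar F(u,x,r)\leq e^{C(n,\lambda)r_*}\bar F(u,x,r_*)$, so it suffices to produce $C_*=C_*(n,\lambda,\Lambda)$ with $\bar F(u,x,r_*)\leq C_*$ for all $x\in B_{1/2}(0)$; then one takes $r_1=r_*$ and $C=e^{C(n,\lambda)r_*}C_*$.

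The bound at scale $r_*$ I would prove by contradiction. Suppose no such $C_*$ exists; then there are operators $\L_m$ satisfying \eqref{e:coefficient_estimates}, nonconstant solutions $u_m$ of $\L_m(u_m)=0$ on $B_1(0)$ with $\bar N^{u_m}(0,1)\leq\Lambda$, and points $x_m\in B_{1/2}(0)$ with $\bar F(u_m,x_m,r_*)\to\infty$. Since $\bar N^{u}(0,1)$ and $\bar F(u,x,r)$ are invariant under affine changes $u\mapsto\alpha u+\beta$ ($\alpha\neq 0$), and such a change preserves solutions of \eqref{eq_Lu}, I would normalize so that $u_m(0)=0$ and $\fint_{\partial B_1(0)}u_m^2=1$. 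Then $\bar N^{u_m}(0,1)\leq\Lambda$ gives $\int_{B_1(0)}|\nabla u_m|^2\leq\Lambda\,|\partial B_1(0)|$, and the Poincar\'e inequality with boundary trace yields a uniform bound $\|u_m\|_{W^{1,2}(B_1(0))}\leq C(n,\Lambda)$. By standard interior $C^{1,\alpha}$ estimates for solutions of \eqref{eq_Lu}, the $u_m$ are bounded in $C^{1,\alpha}$ on compact subsets of $B_1(0)$; passing to a subsequence, $u_m\to u$ weakly in $W^{1,2}(B_1(0))$ and in $C^1_{loc}(B_1(0))$, $x_m\to x\in\overline{B_{1/2}(0)}$, $a_m^{ij}\to a^{ij}$ uniformly on $\overline{B_1(0)}$ (Arzel\`a--Ascoli, using $\mathrm{Lip}(a_m^{ij})\leq\lambda$), and $b_m^i\rightharpoonup b^i$ weak-$*$ in $L^\infty$. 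Hence $u$ solves an equation of the form \eqref{eq_Lu} with coefficients obeying \eqref{e:coefficient_estimates}; moreover the frozen metrics $g_m(x_m)$ converge uniformly on $\overline{B_1(0)}$ to $g(x)$, and the geodesic balls $B(g_m(x_m),x_m,r_*)$, which are the Euclidean ellipsoids $\{a_{ij,m}(x_m)(y-x_m)^i(y-x_m)^j\leq r_*^2\}$, converge to $B(g(x),x,r_*)\Subset B_1(0)$.

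The crucial point is that $u$ is nonconstant. Indeed, the trace map $W^{1,2}(B_1(0))\to L^2(\partial B_1(0))$ is compact, so $u_m|_{\partial B_1(0)}\to u|_{\partial B_1(0)}$ strongly in $L^2$ and $\fint_{\partial B_1(0)}u^2=1$, while $u(0)=\lim_m u_m(0)=0$. Since $u$ is a nonconstant solution of \eqref{eq_Lu} with \eqref{e:coefficient_estimates}, unique continuation and the maximum principle give $H(u,x,g(x),r_*)>0$, hence $\bar F(u,x,r_*)<\infty$. On the other hand $\nabla u_m\to\nabla u$ and $u_m\to u$ uniformly on a neighborhood of the (interior, converging) ellipsoids, the metric coefficients and surface densities converge uniformly, and the drift vector fields $B_m$ of \eqref{eq_B} are bounded in $L^\infty$ and converge weak-$*$ there (this is all the regularity available, $a_m^{ij}$ being only Lipschitz); pairing this weak-$*$ convergence with the strongly convergent factor $(u_m-u_m(x_m))\nabla u_m$ handles the lower order term in $I$. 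Therefore $I(u_m,x_m,g_m(x_m),r_*)\to I(u,x,g(x),r_*)$ and $H(u_m,x_m,g_m(x_m),r_*)\to H(u,x,g(x),r_*)>0$, so $\bar F(u_m,x_m,r_*)\to\bar F(u,x,r_*)<\infty$, contradicting $\bar F(u_m,x_m,r_*)\to\infty$. This proves the claim, and with the first paragraph, the lemma.

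The main obstacle is really the nondegeneracy of the blow-up limit together with the bookkeeping needed to pass $\bar F$ to the limit: one must arrange a normalization that survives the weak convergence — which is why I normalize on $\partial B_1(0)$ and invoke compactness of the trace — so that the limiting $H(u,x,g(x),r_*)$ is positive and $\bar F$ is finite there, this finiteness being precisely the contradiction. Secondarily, because \eqref{eq_Lu} has merely Lipschitz $a^{ij}$, the frozen metric $g(x)$ and especially the drift $B$ of \eqref{eq_B} are only $L^\infty$, so one passes to the limit with weak-$*$/strong pairings rather than uniform bounds, and one must verify that the frozen metrics and the associated geodesic spheres — not only the solutions — converge. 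The reduction to a single scale, by contrast, is immediate from Theorem \ref{th_Nellmon}.
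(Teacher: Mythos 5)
Your proof is correct, but it is organized differently from the paper's. The paper obtains Lemma \ref{lemma_Nec} by a ``standard compactness argument'' from Lemma \ref{lemma_prestrongell}, whose own proof rescales to unit size so that the frozen metrics converge to the Euclidean one and the blow-ups converge to a \emph{harmonic} limit, so that the nondegeneracy and the frequency bound of the limit are imported from the harmonic-case Lemma \ref{lemma_strong2bar}. You instead never pass through Lemma \ref{lemma_prestrongell}: you run the compactness directly at a fixed scale $r_*(n,\lambda)$, where the limit is still a solution of a Lipschitz-coefficient equation of type \eqref{eq_Lu}, you secure $H>0$ (hence finiteness of the limiting $\bar F$) via unique continuation and the maximum principle for that limit equation --- exactly the facts the paper already invokes after Definition \ref{deph_LN} --- and you handle all smaller radii by the almost monotonicity of Theorem \ref{th_Nellmon}. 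What the paper's route buys is that all nondegeneracy issues are funneled through the harmonic model (consistent with its philosophy that solutions look harmonic at small scales), at the price of an extra lemma; what your route buys is a shorter, self-contained argument, at the price of a slightly heavier limit passage (frozen metrics, converging ellipsoids, weak-$*$ convergence of the drift \eqref{eq_B}) and of leaning on unique continuation for merely Lipschitz coefficients, which is legitimate here and indeed follows from the doubling implied by Theorem \ref{th_Nellmon}. Two cosmetic remarks: the identification of the weak-$*$ limit of the drifts is not really needed, since the contradiction only requires $I_m$ bounded and $\liminf H_m>0$; and your normalization $u_m(0)=0$, $\fint_{\partial B_1}u_m^2=1$ together with compactness of the trace is precisely the right device to keep the limit nonconstant, matching the normalization used throughout Section \ref{sec_harmonic}.
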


\subsection{The Frequency Decomposition and Cone-Splitting}
Similar properties to the one proved for harmonic function in Section \ref{sec_spl} are available also
 for solutions to \eqref{eq_Lu}, although it is necessary to restrict the result to scale smaller than 
some $r_0(n,\lambda,\Lambda)$. In some sense, the smaller scales the closer the solutions to \eqref{eq_Lu} 
are to harmonic functions, so if we choose the scale small enough we can replace ``harmonic'' with ``elliptic''
 without changing the final result.

The proofs of the following theorems are obtained using arguments similar to the proof of 
Proposition \ref{lemma_prestrongell} and contradiction/compactness arguments like the ones
 in Section \ref{sec_spl}. For this reason, we omit them.

\begin{theorem}\label{th_N2homell}
 Fix $\eta>0$ and $0\leq \gamma <1$, and let $u:B_1(0)\to \R$ be a nonconstant solution 
to \eqref{eq_Lu} with $\bar N^u(0,1)\leq \Lambda$. Then there exist positive 
$\epsilon=\epsilon(n,\lambda,\eta,\gamma,\Lambda)$ and 
$r_2=r_2(n,\lambda,\eta,\gamma,\Lambda) $ such that if $r\leq r_2$ and
\begin{gather}
\bar F(0,r)-\bar F(0,\gamma r)< \epsilon\, ,
\end{gather}
then $u$ is $(0,\eta,r,0)$-symmetric.

\end{theorem}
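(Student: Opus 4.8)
The plan is to run exactly the contradiction/compactness scheme used for Theorem \ref{th_N2hom}, but performed after a \emph{geodesic} blow-up and with the generalized frequency $\bar F$ and its almost monotonicity (Theorem \ref{th_Nellmon}) in place of $\bar N$ and exact monotonicity. Suppose the statement fails. Then there are operators $\L_i$ of the form \eqref{eq_Lu} with coefficients obeying \eqref{e:coefficient_estimates}, nonconstant solutions $u_i$ with $\bar N^{u_i}(0,1)\le\Lambda$, and radii $r_i\downarrow 0$ such that, writing $g_i$ for the metric of Proposition \ref{prop_gij} attached to $\L_i$ at the origin,
\begin{gather*}
 \bar F(u_i,0,g_i,r_i)-\bar F(u_i,0,g_i,\gamma r_i)<\tfrac1i\, ,
\end{gather*}
yet no $u_i$ is $(0,\eta,r_i,0)$-symmetric. (If no such sequence with $r_i\to0$ existed, the desired $r_2$ would exist for trivial reasons, so we may assume $r_i\to0$.)

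First I would blow up. Put $U_i=U_{0,r_i}u_i$ as in Definition \ref{deph_Uell}, so $U_i(0)=0$ and $\fint_{\partial B(g_i^{r_i},0,1)}|U_i|^2\,dS=1$, where $g_i^{r_i}(r,\theta)=g_i(r_ir,\theta)$; by \eqref{eq_dt}, $U_i$ solves $\Delta_{g_i^{r_i}}U_i=r_i\ps{B\,}{\,\nabla U_i}_{g_i^{r_i}}$ with $|B|_{g_i^{r_i}}\le C(\lambda)$. By \eqref{e:coefficient_estimates} and Proposition \ref{prop_gij} the rescaled metrics $g_i^{r_i}$ converge in the Lipschitz topology on $B_1(0)$ to the Euclidean metric $e$. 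For $i$ large $r_i$ is admissible in Lemma \ref{lemma_Nec}, which bounds $\bar F(u_i,0,g_i,r_i)$; transporting this bound through Lemma \ref{lemma_Ninvell} and invoking Proposition \ref{prop_ID} (the smallness of the drift coefficient $r_i$ makes $I$ and $D$ comparable at scale $1$), one obtains a uniform bound on $\int_{B_1(0)}|\nabla U_i|^2$, hence, with $U_i(0)=0$ and Poincaré, a uniform $W^{1,2}(B_1(0))$ bound, and then uniform $C^{1,\alpha}$ bounds on compact subsets by interior elliptic estimates. Passing to a subsequence, $U_i\to U$ weakly in $W^{1,2}(B_1(0))$ and in $C^1_{loc}(B_1(0))$; since the drift coefficient $r_i\to0$ and $g_i^{r_i}\to e$, the limit $U$ is harmonic, and by trace compactness $\fint_{\partial B_1(0)}U^2=1$, so $U$ is nonconstant.

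Next I would pass the frequency hypothesis to the limit. By Lemma \ref{lemma_Ninvell}, $\bar F(u_i,0,g_i,r_i)=\bar F(U_i,0,g_i^{r_i},1)$ and $\bar F(u_i,0,g_i,\gamma r_i)=\bar F(U_i,0,g_i^{r_i},\gamma)$, and by the remark in Definition \ref{deph_LN} the generalized frequency of a harmonic function in the Euclidean metric is $\bar N$. Using $C^1_{loc}$ convergence at the interior scale $\gamma$ and weak $W^{1,2}$ lower semicontinuity plus trace compactness at scale $1$, together with $g_i^{r_i}\to e$, one gets $\bar F(U_i,0,g_i^{r_i},\gamma)\to\bar N^U(0,\gamma)$ and $\bar N^U(0,1)\le\liminf\bar F(U_i,0,g_i^{r_i},1)$; combining with the hypothesis and monotonicity of $\bar N^U$ forces $\bar N^U(0,1)-\bar N^U(0,\gamma)=0$, so $U$ is a homogeneous harmonic polynomial, in particular $0$-symmetric, with $\fint_{\partial B_1(0)}|U|^2=1$. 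Finally, since $U_{0,r_i}u_i=U_i\to U$ in $L^2(B_1(0))$ and, as $r_i\to0$, $U_{0,r_i}u_i$ and $T_{0,r_i}u_i$ differ by a quantity tending to $0$ (the remark after Definition \ref{deph_Tell}, made uniform along the sequence by the uniform elliptic estimates), we get $\fint_{B_1(0)}|T_{0,r_i}u_i-U|^2<\eta$ for $i$ large, i.e.\ $u_i$ is $(0,\eta,r_i,0)$-symmetric --- a contradiction. The admissible $r_2$ is then the smallest of the thresholds supplied by Theorem \ref{th_Nellmon}, Lemma \ref{lemma_Nec}, and the metric compactness; the reduction of the general scale $r$ to the scale $1$ is by the scale invariance of $\bar F$ in Lemma \ref{lemma_Ninvell}, as in the harmonic case.

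The main obstacle is carrying out this joint limit: one must simultaneously blow up $u_i$, let the rescaled metrics $g_i^{r_i}$ degenerate to the Euclidean metric, and let the drift coefficient $r_i$ vanish, all while checking that every ingredient of $\bar F$ --- and in particular the positivity and boundedness of $H$, which keep $\bar F$ well defined and the normalization nondegenerate --- is stable. This is exactly where the uniform restriction $r\le r_0$ and Proposition \ref{prop_ID} (equivalence of $I$ and $D$, and the lower bound $\bar F\ge c$) are indispensable, and it is the reason the conclusion holds only for $r\le r_2$, unlike its harmonic counterpart.
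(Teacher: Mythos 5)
Your proposal is correct and follows essentially the same route the paper intends: the paper omits the proof of Theorem \ref{th_N2homell} precisely because it is obtained by the contradiction/compactness scheme of Lemma \ref{lemma_prestrongell} (geodesic blow-ups $U_{0,r_i}u_i$, rescaled metrics converging in the Lipschitz sense to the Euclidean one, vanishing drift, uniform frequency bounds via Lemma \ref{lemma_Nec} and Proposition \ref{prop_ID}) combined with the harmonic-case pinching argument of Theorem \ref{th_N2hom}, which is exactly what you carry out, including the passage from $U_{0,r_i}$ to $T_{0,r_i}$ at small scales.
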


In a similar way, we can also prove a generalization of Corollary \ref{cor_symsum}:
\begin{corollary}\label{cor_symsumell}
 Fix  $\eta>0,\, \tau>0$, $0<\chi\leq 1$ and $k\leq n-2$. There exist 
$\epsilon(\lambda,\eta,\tau,\chi,\Lambda)$ and $r_3=r_3(\lambda,\eta,\tau,\chi,\Lambda)$ 
with the following property. Assume $u$ solves \eqref{eq_Lu} with $\bar N^u(0,1)\leq \Lambda$
 and for some $x\in B_{1/2}(0)$ we have
\begin{enumerate}
 \item $u$ is $(0,\epsilon,\chi r_3,x)$-symmetric,
 \item for every affine subspace $V$ passing through $x$ of dimension $\leq k$,
 there exists $y\in B_{\chi r_3}(x)\setminus B_{\tau}V$ such that $u$ is $(0,\epsilon,\chi r_3,y)$-symmetric.
\end{enumerate}
Then $u$ is $(k+1,\epsilon, r_3,x)$-symmetric.
\end{corollary}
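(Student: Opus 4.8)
The strategy is to argue exactly as in the harmonic cone-splitting results of Section~\ref{sec_spl}, reducing the statement to the algebraic splitting of polynomials (Proposition~\ref{prop_poly}) by a contradiction--compactness argument; the geodesic blow-up $U_{x,t}$ of Definition~\ref{deph_Uell} is what converts, at small scales, a solution of \eqref{eq_Lu} into a harmonic function. Assume the statement is false: for each $i$ there are an operator $\L_i$ of the form \eqref{eq_Lu} with coefficients satisfying \eqref{e:coefficient_estimates}, a nonconstant solution $u_i$ with $\bar N^{u_i}(0,1)\le\Lambda$, a point $x_i\in B_{1/2}(0)$, and a scale $r_3^{(i)}\to 0$, for which hypotheses (1)--(2) hold with $\epsilon=1/i$ and $r_3=r_3^{(i)}$ but $u_i$ is \emph{not} $(k+1,\eta,r_3^{(i)},x_i)$-symmetric (one may take $\epsilon\le\eta$, so it suffices to contradict $(k+1,\eta,\cdot)$-symmetry).

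First I would set up the rescaled picture. Let $g^i$ denote the metric of Proposition~\ref{prop_gij} centered at $x_i$, rescaled so that the scale $r_3^{(i)}$ becomes $1$; by \eqref{eq_bst} the $g^i$ converge in the Lipschitz sense on $B_1(0)$ to the Euclidean metric (this is the convergence already exploited in Lemma~\ref{lemma_prestrongell}). Put $U_i=U_{x_i,r_3^{(i)}}u_i$; by \eqref{eq_dt} it solves $\Delta_{g^i}U_i=r_3^{(i)}\langle B_i,\nabla U_i\rangle_{g^i}$ and is normalized by $\fint_{\partial B(g^i,0,1)}|U_i|^2\,dS(g^i)=1$. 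Lemma~\ref{lemma_Nec} together with Proposition~\ref{prop_ID} bounds the $U_i$ in $W^{1,2}(B_1(0))$, hence, by elliptic estimates, in $C^{1,\alpha}_{\mathrm{loc}}(B_1(0))$, so along a subsequence $U_i\to U$ weakly in $W^{1,2}$ and in $C^{1,\alpha}_{\mathrm{loc}}$. Passing to the limit in the equation gives $\Delta U=0$; the uniform doubling for $H$ that comes with the frequency bound of Lemma~\ref{lemma_Nec} (the analogue of \eqref{eq_dou}) keeps $\fint_{\partial B_\rho}|U|^2$ bounded below for each fixed $\rho<1$, so the $L^2$-mass cannot escape to $\partial B_1$ and $U$ is a nonconstant harmonic function with $\bar N^U(0,1)\le\Lambda$. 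Since $U_{x,t}u-T_{x,t}u\to 0$ in $C^{0,1}$ as $t\to 0$, the functions $T_{x_i,r_3^{(i)}}u_i$ have the same limit $U$, and because the linear maps $Q_{x_i}$ commute with scalar dilations, $T_{x_i,\chi r_3^{(i)}}u_i\to U(\chi\,\cdot)/\|U(\chi\,\cdot)\|_{L^2(\partial B_1)}$ as well.

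Now the compactness is used exactly as in Theorems~\ref{th_N2hom} and \ref{th_qspl}. Hypothesis (1) says $\fint_{B_1}|T_{x_i,\chi r_3^{(i)}}u_i-P_i|^2<1/i$ for normalized $0$-symmetric polynomials $P_i$ (of degree bounded in terms of $\Lambda$); passing to the limit forces $U(\chi\,\cdot)/\|U(\chi\,\cdot)\|$ to be a $0$-symmetric polynomial, hence by homogeneity and analyticity $U$ is itself a nonconstant homogeneous harmonic polynomial. In the rescaled picture hypothesis (2) gives, for each linear subspace $V\ni 0$ with $\dim V\le k$, a point $y_i^V$ in the (geodesic) ball of radius $\chi$ with $\dist(y_i^V,V)\ge\tau$ at which $u_i$ is $(0,1/i)$-symmetric; along a further subsequence (depending on $V$) $y_i^V\to y^V$ with $\dist(y^V,V)\ge\tau>0$, and $U$ is $0$-symmetric at $y^V$. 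Feeding this into Proposition~\ref{prop_poly}: if $W$ is the maximal symmetry subspace of $U$ and $\dim W\le k$, then $y^W\notin W$, $U$ is $0$-symmetric at $0$ and at $y^W$, and $U$ is not $(n-1)$-symmetric (otherwise it is linear, hence already $(k+1)$-symmetric since $k\le n-2$), so Proposition~\ref{prop_poly}(2) makes $U$ symmetric with respect to $\mathrm{span}(W,y^W)$, contradicting maximality. Hence $U$ is $(k+1)$-symmetric. Finally $T_{x_i,r_3^{(i)}}u_i\to U$ in $L^2(B_1(0))$ and $U$ is a normalized $(k+1)$-symmetric polynomial, so for $i$ large $u_i$ is $(k+1,\eta,r_3^{(i)},x_i)$-symmetric --- contradicting the choice of the $u_i$.

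I expect the only real obstacle, compared with the harmonic case, to be the uniform small-scale control of the rescaled solutions: one must invoke the almost-monotonicity of $\bar F$ (Theorem~\ref{th_Nellmon}) and the small-scale frequency bounds (Lemmas~\ref{lemma_prestrongell}--\ref{lemma_Nec}) to guarantee both the $W^{1,2}$ bound and the nonconstancy of the limit, and the convergences $g^i\to e$, $U_{x,t}\to T_{x,t}$, $r_3^{(i)}\langle B_i,\nabla U_i\rangle\to 0$ to identify the limit problem with the harmonic model. This is precisely what forces the restriction to scales $r\le r_3(n,\lambda,\eta,\tau,\chi,\Lambda)$. Everything else --- the algebra of Proposition~\ref{prop_poly}, the diagonal extraction of the points $y^V$, and transferring $(k,\epsilon,r,x)$-symmetry between the sequence and the limit --- is routine and parallels Section~\ref{sec_spl} verbatim; indeed Theorem~\ref{th_N2homell} is established by the same scheme.
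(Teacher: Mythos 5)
Your argument is correct and is exactly the route the paper intends: the paper omits this proof, stating only that it follows by a contradiction--compactness argument as in Lemma \ref{lemma_prestrongell} combined with the harmonic cone-splitting arguments of Section \ref{sec_spl}, and your blow-up via $U_{x,t}$, the Lipschitz convergence $g^i\to e$, the frequency/doubling bounds from Lemma \ref{lemma_Nec}, and the reduction to Proposition \ref{prop_poly} supply precisely those details (including the correct handling of the harmless $\epsilon$-versus-$\eta$ discrepancy in the stated conclusion). No gaps worth flagging beyond the routine non-degeneracy of the normalizations at the points $y_i^V$, which you rightly note is handled by the same doubling estimates as in Theorem \ref{th_qspl}.
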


By \eqref{e:coefficient_estimates}, we have uniform $C^{1,\alpha}$ estimates on the solutions to
 \eqref{eq_Lu} (see \cite{GT} for details). For this reason, it is straightforward to prove the following 
proposition, which is a generalization of Proposition \eqref{prop_qcrit}.
\begin{proposition}\label{prop_qcritell}
 Let $u:B_1(0)\to \R$ be a solution to \eqref{eq_Lu} with \eqref{e:coefficient_estimates} such that 
$\bar F^u(0,1)\leq \Lambda$. For every $\epsilon>0$ and $0\leq \alpha <1$, there exists positive 
$\bar \eta$ and $r_0$ depending on $(n,\epsilon,\alpha,\lambda,\Lambda)$ such that if for some 
$x\in B_{1/2}(0)$ and $r\leq r_0$ $u$ is $(n-1,\bar \eta,r,x)$-symmetric, then
 \begin{gather}
  \norm{U_{x,r}u -L}_{C^{1,\alpha}(B_{1/2})}\leq \epsilon\, ,
 \end{gather}
where $L$ denotes a linear function satisfying $\fint_{\partial B_1} \abs L^2 dS =1$. In particular, 
by choosing $\alpha=0$ and $\epsilon$ sufficiently small,  there exist positive $\eta$ and $r_0$ 
depending on $n,\lambda,\Lambda$, such that if $u$ $(n-1,\eta,r,x)$-symmetric, then $u$ does 
not have critical points in $B_{r/2}(x)$.
\end{proposition}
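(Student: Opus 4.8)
\emph{Approach.} I would prove the proposition by the contradiction/compactness argument already used for harmonic functions in Proposition~\ref{prop_qcrit}; the only genuinely new features are that the almost monotone generalized frequency $\bar F$ now plays the role of Almgren's $\bar N$, and that one must carry along the rescaled metrics $g(x)^t$ and track the small discrepancy between the blow-up maps $U_{x,t}u$ and $T_{x,t}u$. Concretely, suppose the first assertion failed for some fixed $\epsilon>0$ and $0\leq\alpha<1$. Taking $\bar\eta=r_0=i^{-1}$ in the negation would produce operators $\L_i$ obeying \eqref{e:coefficient_estimates}, solutions $u_i$ with $\bar F^{u_i}(0,1)\leq\Lambda$, points $x_i\in B_{1/2}(0)$ and radii $r_i\to 0$, such that $u_i$ is $(n-1,i^{-1},r_i,x_i)$-symmetric while $\norm{U_{x_i,r_i}u_i-L}_{C^{1,\alpha}(B_{1/2})}>\epsilon$ for every normalized linear $L$.

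\emph{Blow-up and compactness.} Set $U_i=U_{x_i,r_i}u_i$ and $g^i=g(x_i)^{r_i}$; by \eqref{e:coefficient_estimates} the metrics $g^i$ converge, in the Lipschitz sense on $B_1(0)$, to the Euclidean metric. For $r_i$ small, Lemma~\ref{lemma_Nec} gives $\bar F(u_i,x_i,r_i)\leq C(n,\lambda,\Lambda)$, and then, arguing exactly as in the proof of Lemma~\ref{lemma_prestrongell} (via Lemma~\ref{lemma_Ninvell} and Proposition~\ref{prop_ID}), $\int_{B_1}\abs{\nabla U_i}^2\leq C(n,\lambda,\Lambda)$. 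Since $U_i(0)=0$ and $U_i$ is normalized on $\partial B(g^i,0,1)$, the $U_i$ are bounded in $W^{1,2}(B_1)$; since $a^{ij}$ is Lipschitz and $b^i$ bounded, equation~\eqref{eq_dt} has uniformly Lipschitz leading coefficients and uniformly bounded lower-order term, so elliptic regularity yields uniform $W^{2,p}(B_{3/4})$ bounds for every $p<\infty$, hence a uniform $C^{1,\alpha'}(B_{3/4})$ bound for some $\alpha<\alpha'<1$. Passing to a subsequence, $U_i\to U$ in $C^{1,\alpha}(B_{1/2})$ and weakly in $W^{1,2}(B_1)$, and $x_i\to x$; since the right-hand side of \eqref{eq_dt} is $r_i\ps{B}{\nabla U_i}_{g^i}$ with $\abs{B}_{g^i}\leq C(\lambda)$ and $r_i\to 0$, the limit $U$ is harmonic, and the Lipschitz convergence of the metrics passes the normalization to the limit, $\fint_{\partial B_1}U^2=1$.

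\emph{Identifying the limit, and the final clause.} By definition, $(n-1,i^{-1},r_i,x_i)$-symmetry of $u_i$ means $\fint_{B_1}\abs{T_{x_i,r_i}u_i-P_i}^2<i^{-1}$ for some normalized $(n-1)$-symmetric polynomial $P_i$, which we may take harmonic (remark after the definition of symmetry), hence linear by Proposition~\ref{prop_poly}. The $P_i$ range in a compact family, so after a subsequence $P_i\to L_\infty$, a normalized linear function, and $T_{x_i,r_i}u_i\to L_\infty$ in $L^2(B_1)$; since $T_{x_i,r_i}u_i$ solves the rescaled equation of Definition~\ref{deph_Tell} with uniformly Lipschitz/bounded coefficients, it enjoys the same uniform $C^{1,\alpha'}$ bound, so this $L^2$ convergence upgrades to $T_{x_i,r_i}u_i\to L_\infty$ in $C^{1,\alpha}(B_{1/2})$. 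By the remark following Definition~\ref{deph_Tell}, $\norm{U_{x_i,r_i}u_i-T_{x_i,r_i}u_i}_{C^{0,1}(B_1)}\to 0$ (uniformly, by the $C^{1,\alpha'}$ bound), so interpolation gives $\norm{U_i-T_{x_i,r_i}u_i}_{C^{1,\alpha}(B_{1/2})}\to 0$, whence $U_i\to L_\infty$ in $C^{1,\alpha}(B_{1/2})$ with $L_\infty$ a normalized linear function --- contradicting the choice of $u_i$. For the ``in particular'' clause I would apply the first part with $\alpha=0$ and $\epsilon=\beta(n)/2$, where $\beta(n)=\abs{\nabla L}$ for a normalized linear $L$: this gives $\abs{\nabla U_{x,r}u}\geq\beta(n)/2>0$ on $B_{1/2}(0)$, and unwinding the geodesic rescaling via the chain rule and the $\lambda$-controlled bi-Lipschitz change of variables $Q_x$ then yields a positive lower bound for $\abs{\nabla u}$ on $B_{r/2}(x)$ (up to a harmless $\lambda$-dependent adjustment of the constants), so $u$ has no critical point there.

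\emph{Main obstacle.} All the analytic input --- uniform small-scale bounds on $\bar F$ (Lemma~\ref{lemma_Nec}), the comparison $I\sim D$ and the lower bound on $\bar F$ (Proposition~\ref{prop_ID}), elliptic $W^{2,p}$ estimates, and the fact that a harmonic $(n-1)$-symmetric polynomial is linear --- is already in hand, so no individual step is deep. The point requiring genuine care is the bookkeeping in the compactness step: checking that the rescaled metrics $g(x_i)^{r_i}$ really degenerate to the Euclidean metric (so that the normalization and the harmonicity of the limit survive passage to the limit), and controlling the $O(r_i)$, $\lambda$-bounded discrepancy between $U_{x_i,r_i}u_i$ and $T_{x_i,r_i}u_i$ --- the hypothesis being a condition on $T_{x,r}u$ whereas the conclusion is phrased in terms of $U_{x,r}u$.
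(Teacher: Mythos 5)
Your proposal is correct and follows essentially the same route as the paper, which itself only remarks that the statement follows "straightforwardly" from the uniform $C^{1,\alpha}$ elliptic estimates by repeating the contradiction--compactness argument of Proposition \ref{prop_qcrit}. The details you supply --- the rescaled metrics degenerating to the Euclidean one so the limit is harmonic, the identification of the limit with a normalized linear function via Proposition \ref{prop_poly}, and the $C^{0,1}$-closeness of $U_{x,r}u$ and $T_{x,r}u$ --- are exactly the ingredients the paper has already set up (Lemmas \ref{lemma_Nec}, \ref{lemma_prestrongell} and the discussion after Definition \ref{deph_Tell}) and relies on implicitly.
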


\subsection{Minkowski Type Estimates and the Proof of 
Theorem \ref{t:quantstrat}}
Now we are ready to prove Theorem \ref{t:quantstrat}. As in the harmonic case, we prove the
 theorem only for some $r=\gamma^j$ for a suitable value of $0<\gamma<1$ and every $j$, the 
general case follows easily from this. For the reader's convenience, here we restate the theorem in this context.
\begin{theorem}\label{th_main_ell}
Let $u:B_1(0)\to\dR$ be a solution to \eqref{eq_Lu} with \eqref{e:coefficient_estimates} and
 such $\bar N^u(0,1)\leq \Lambda$. Then for some $0<\gamma(n,\eta,\lambda,\Lambda)<1$,
 for every $j\in \N$, $\eta>0$ and $k\leq n-2$ we have 
\begin{align}
{\rm Vol}\ton{B_{\gamma^j}(\cS^k_{\eta,\gamma^j})\cap B_{1/2}(0)}
\leq C(n,\lambda,\Lambda,\eta) \ton{\gamma^j}^{n-k-\eta}\, .
\end{align}
\end{theorem}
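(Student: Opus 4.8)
The plan is to transcribe the frequency decomposition of the harmonic case (the proof of Theorem~\ref{th_main_proof} via the Decomposition Lemma~\ref{lemma_dec}) almost verbatim, replacing the Almgren frequency $\bar N$ by the generalized frequency $\bar F$ of Section~\ref{sec_freqell} and the harmonic blow-ups by the auxiliary functions $U_{x,r}u$, $T_{x,r}u$ of Definitions~\ref{deph_Uell}--\ref{deph_Tell}. The one genuinely new issue is that the almost monotonicity of $\bar F$ (Theorem~\ref{th_Nellmon}) and the rigidity and cone-splitting statements (Lemma~\ref{lemma_Nec}, Theorem~\ref{th_N2homell}, Corollary~\ref{cor_symsumell}, Proposition~\ref{prop_qcritell}) hold only below a scale $r_0=r_0(n,\lambda,\Lambda)$; so the whole argument must be run at scales $\leq r_0$, with the finitely many larger scales absorbed into the error.

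First I would reduce to small scales. Since $\Vol(B_{1/2}(0))\leq \omega_n$ and $\gamma^j\geq r_0$ for all $j$ below some threshold $j_0$ with $\gamma^{j_0}\sim r_0$, the asserted inequality holds trivially (with a large constant) for $j\leq j_0$; so from now on assume $\gamma^j\leq r_0$, where $r_0$ is small enough that all the quoted elliptic statements apply on $B_{1/2}(0)$, and fix $0<\gamma<1$ at the very end. I then set up the decomposition exactly as in the harmonic case: define $\cN(u,x,r)=\inf\{\alpha\geq 0:\ u\ \text{is}\ (0,\alpha,r,x)\text{-symmetric}\}$, split $B_{1/2}(0)$ into $H_{\gamma^i,\epsilon}(u)$ and $L_{\gamma^i,\epsilon}(u)$, attach to each $x$ the tuple $T^j(x)\in\{0,1\}^j$ recording which scales $\gamma^i$ lie in $H_{\gamma^i,\epsilon}$, form the sets $E(\bar T^j)$, and build the families $\{C^k_{\eta,\gamma^j}(\bar T^j)\}$ by induction on $a$, taking at each step a minimal $\gamma^a$-covering of $B_{\gamma^{a-1}}(y)\cap\cS^k_{\eta,\gamma^j}\cap E(\bar T^j)$ centered in that set. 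The threshold $\epsilon=\epsilon(n,\lambda,\Lambda,\eta)$ is fixed once and for all so that Corollary~\ref{cor_symsumell} applies with $\tau=\frac{1}{7}$ and $\chi=\gamma$.

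The two points of the Decomposition Lemma then go through. For Point~1, Theorem~\ref{th_N2homell} supplies $\delta=\delta(n,\lambda,\Lambda,\eta)>0$ such that at scales $\gamma^i\leq r_0$ a drop $\bar F(x,\gamma^i)-\bar F(x,\gamma^{i+1})<\delta$ forces $x\in L_{\gamma^i,\epsilon}$, hence a $0$ in slot $i$. The number of slots equal to $1$ is then bounded via almost monotonicity: since $e^{Cr}\bar F(r)$ is nondecreasing on $(0,r_0)$, $\bar F$ is bounded above by $C(n,\lambda,\Lambda)$ (Lemma~\ref{lemma_Nec}) and below by $c(n,\lambda)>0$ (Proposition~\ref{prop_ID}), and $|e^{Cr}-1|\leq Cr_0e^{Cr_0}$ for $r\leq r_0$, the sum of the positive parts of the drops $\bar F(x,\gamma^i)-\bar F(x,\gamma^{i+1})$ over a chain of consecutive scales $\leq r_0$ is at most a constant $\bar C(n,\lambda,\Lambda)$ (the telescoping of $e^{Cr}\bar F$ plus a geometrically convergent error from the exponential correction); hence at most $\bar C/\delta$ bad slots. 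Counting in addition the finitely many slots whose scale $\gamma^{a-1}$ still exceeds the threshold $r_3$ of Corollary~\ref{cor_symsumell}, we obtain a total of at most $D=D(n,\lambda,\Lambda,\eta)$ ``exceptional'' slots, so $E(\bar T^j)=\emptyset$ unless $|\bar T^j|\leq D$, giving at most $\binom{j}{D}\leq j^D$ admissible tuples. For Point~2, at a non-exceptional step $a$ every point of $E(\bar T^j)$ is $(0,\epsilon,\gamma^a,\cdot)$-symmetric; if $\cS^k_{\eta,\gamma^j}\cap B_{\gamma^{a-1}}(y)\cap E(\bar T^j)$ failed to lie in a $\frac{1}{7}\gamma^a$-neighborhood of some $k$-plane through $y$, Corollary~\ref{cor_symsumell} would make some of its points $(k+1,\eta,\gamma^{a-1},\cdot)$-symmetric, contradicting membership in $\cS^k_{\eta,\gamma^j}$; so that set is covered by $c_0(n)\gamma^{-k}$ balls of radius $\gamma^a$ centered in it, while at the $\leq D$ exceptional steps one uses the trivial covering by $c_1(n)\gamma^{-n}$ balls. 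Induction on $a$ shows each nonempty $C^k_{\eta,\gamma^j}$ is a union of at most $(c_1\gamma^{-n})^D(c_0\gamma^{-k})^{j-D}$ balls of radius $\gamma^j$.

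Combining the two points covers $\cS^k_{\eta,\gamma^j}\cap B_{1/2}(0)$ by at most $j^D(c_1\gamma^{-n})^D(c_0\gamma^{-k})^{j-D}$ balls of radius $\gamma^j$; doubling the radii covers the tube, so
\begin{gather}
\Vol\ton{B_{\gamma^j}(\cS^k_{\eta,\gamma^j})\cap B_{1/2}(0)}\leq j^D(c_1\gamma^{-n})^D(c_0\gamma^{-k})^{j-D}\,\omega_n 2^n(\gamma^j)^n\, .
\end{gather}
Choosing $\gamma=c_0(n)^{-2/\eta}$ and using $j^D\leq c(n,\lambda,\Lambda,\eta)(\gamma^j)^{-\eta/2}$ together with $(c_1\gamma^{-n})^D(c_0\gamma^{-k})^{-D}\leq c(n,\lambda,\Lambda,\eta)$ yields the bound $C(n,\lambda,\Lambda,\eta)(\gamma^j)^{n-k-\eta}$, as claimed. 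I expect the only real obstacle to be Point~1: because $\bar F$ itself is not monotone and $e^{Cr}\bar F$ is monotone only on $(0,r_0)$, one must combine the exponential correction with the two-sided a priori bounds on $\bar F$ to still get a bound on the number of bad scales uniform in $x\in B_{1/2}(0)$; everything else is a routine reprise of the harmonic argument once attention is restricted to scales $\leq r_0$.
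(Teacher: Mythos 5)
Your proposal is correct and follows essentially the same route as the paper: the paper's own proof is just a sketch that says to rerun the harmonic-case frequency decomposition with Lemma~\ref{lemma_strong2bar}, Theorem~\ref{th_N2hom} and Corollary~\ref{cor_symsum} replaced by Lemma~\ref{lemma_Nec}, Theorem~\ref{th_N2homell} and Corollary~\ref{cor_symsumell}, restricting to scales $\gamma^i\leq r_0$ and absorbing the finitely many larger scales into the constant $D$. Your fleshed-out treatment of the quantitative differentiation step under mere almost monotonicity of $\bar F$ (telescoping $e^{Cr}\bar F$ against the two-sided a priori bounds) is exactly the point the paper dismisses as ``straightforward,'' and you carry it out correctly.
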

\begin{proof}
Since the proof is almost identical to that of Theorem \ref{th_main_proof}, 
 we simply mention how to adapt the proof from the harmonic case.

Fix  $\eta>0$ and let $\gamma=c_0^{-2/\eta}<1$, $\chi=\gamma$. Let $\tau>0$. 
Take $r_0$ to be the minimum of $r_1$ given by Lemma \ref{lemma_Nec}, $r_2$ given 
by Corollary \ref{th_N2homell} and 
let $r_3$ be given by Corollary \ref{cor_symsumell}. Then, if $i$ is large enough so 
that $\gamma^i\leq r_0$, then the same proof as in the harmonic case applies also to this 
more general case with Lemma \ref{lemma_strong2bar} replaced by Lemma \ref{lemma_Nec}, 
Theorem \ref{th_N2hom} by \ref{th_N2homell} and Corollary \ref{cor_symsum} by Corollary \ref{cor_symsumell}.

Note that $\gamma^i> r_0$ for only a finite number of exponents $i$, and 
that the number of such exponents  is bounded by a uniform constant $D'=D'(n,\lambda,\eta,\Lambda)$. 
 Finally, even though in the elliptic case
$\bar F$ not monotone, but rather, only almost monotone, it is straightforward to see that an estimate of the form
 given in equation \eqref{eq_estK} still holds.
\end{proof}

\begin{remark}
The main application for this theorem is the volume estimate on the tubular neighborhoods of the 
critical set (Theorem \ref{t:crit_lip}). As in the harmonic case, this theorem is a simple corollary of 
Theorem \ref{t:quantstrat} and Proposition \ref{prop_qcritell}.
\end{remark}


\subsection[Estimates on (n-2)-dimensional Hausdorff Measure, for  Solutions of Elliptic Equations]{Estimates 
on $(n-2)$-dimensional Hausdorff Measure, for  Solutions of
 Elliptic Equations}
As for the Minkowski type estimates, it is also possible to generalize the effective estimates for the critical set involving
$(n-2)$-dimensional Hausdorff measure, to solutions to elliptic equations of the form \eqref{eq_Lu}. However for this
 estimate, we require higher order regularity assumptions on the coefficients $a^{ij}$ and $b^i$.

The following lemma is the generalization of Corollary \ref{cor_ereg} for solutions to \eqref{eq_Lu}.
\begin{lemma}\label{lemma_eregell}
 Let $P$ be an $(n-2)$-symmetric homogeneous harmonic polynomial normalized with $\fint_{\partial B_1} P^2 dS =1$.
 Let $u:B_1(0)\to \R$ be a solution to \eqref{eq_Lu} with conditions \eqref{e:coefficient_estimates} and such that 
$\bar N^u(0,1)\leq \Lambda$. There exists a positive integer $M=M(n,\lambda,\Lambda)$ such that if
\begin{gather}\label{eq_cond}
 \norm{a^{ij}}_{C^M(B_1(0))}, \ \norm{b^i}_{C^M(B_1(0))} \leq L\, ,
\end{gather}
then there exist positive $C=C(n,L,\Lambda)$, $\bar r = \bar r (n,L,\Lambda)$, $\epsilon=\epsilon(n,L,\Lambda)$ 
and $\chi=\chi(n,L,\Lambda)$ such that if for some $x\in B_{1/2}(0)$ and $r\leq \bar r$ we have
\begin{gather}\label{eq_estT}
 \fint_{\partial B_1(0)} \abs{U_{x,r} u -P}^2 dS < \epsilon\, ,
\end{gather}
then for all $s\leq \chi r$,
\begin{gather}
 H^{n-2}\ton{\nabla u^{-1}(0) \cap B_{s}(x)}\leq C s^{n-2}\, .
\end{gather}
\end{lemma}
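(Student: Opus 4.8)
The plan is to reduce the assertion to the $\epsilon$-regularity Lemma \ref{lemma_n-2Ha}, applied not to $u$ directly but to the Euclidean-model blow-up $T_{x,r}u$ of Definition \ref{deph_Tell}, and then to push the resulting Hausdorff estimate back to $u$ through the bi-Lipschitz map $Q_x$. Since $\nabla(T_{x,r}u)(y)=0$ exactly when $\nabla u$ vanishes at $x+rQ_x^{-1}y$, one has $\nabla u^{-1}(0)=x+rQ_x^{-1}\bigl(\nabla(T_{x,r}u)^{-1}(0)\bigr)$, and $Q_x$ has bi-Lipschitz constant $(1+\lambda)^{1/2}$; hence a bound $H^{n-2}\bigl(\nabla(T_{x,r}u)^{-1}(0)\cap B_\sigma(0)\bigr)\le c\,\sigma^{n-2}$ valid for $\sigma\le\sigma_0$ yields at once $H^{n-2}\bigl(\nabla u^{-1}(0)\cap B_s(x)\bigr)\le C\,s^{n-2}$ for all $s\le\chi r$ with $\chi=\sigma_0/(2(1+\lambda)^{1/2})$. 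So the task is, for $r$ small, to produce the hypothesis of Lemma \ref{lemma_n-2Ha} for $T_{x,r}u$ on a fixed ball -- namely $C^{2d^2}$-closeness to a normalized $(n-2)$-symmetric harmonic polynomial $P$ of degree $d$ -- \emph{together with} a uniform bound $d\le d_0(n,\lambda,\Lambda)$; the latter is what renders the constants $\epsilon,\bar r,\chi$ and the degree-dependent factor $c(n)(d-1)^2$ of Lemma \ref{lemma_n-2Ha} uniform, via the normal-form description of $(n-2)$-symmetric harmonic polynomials noted after Corollary \ref{cor_ereg}. (The case $d=1$, $P$ linear, is trivial by Proposition \ref{prop_qcritell}.)

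Both the degree bound and the $C^{2d^2}$-closeness come out of a single contradiction--compactness argument, in the spirit of the proof of Lemma \ref{lemma_prestrongell}. Suppose not: there are solutions $u_i$ of \eqref{eq_Lu} with coefficients bounded in $C^M$, $\bar N^{u_i}(0,1)\le\Lambda$, points $x_i\in B_{1/2}(0)$, scales $r_i\to 0$, and normalized $(n-2)$-symmetric harmonic polynomials $P_i$ with $\fint_{\partial B_1(0)}|U_{x_i,r_i}u_i-P_i|^2\to 0$, for which the conclusion fails. For $r_i$ small, $T_{x_i,r_i}u_i$ is defined and solves its elliptic equation on $B_2(0)$ with coefficients bounded in $C^M$ (by \eqref{eq_U}) and converging to those of $\Delta$; by Lemma \ref{lemma_Nec} and the invariance of $\bar F$ (Lemma \ref{lemma_Ninvell}), the frequency $N^{T_{x_i,r_i}u_i}(0,1)$ is uniformly bounded, so doubling controls $\|T_{x_i,r_i}u_i\|_{L^2(B_{7/4})}$ and interior Schauder estimates give a uniform $C^{M+1,\alpha}(\overline B_{3/2})$ bound. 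Along a subsequence $T_{x_i,r_i}u_i\to V$ in $C^{M+1,\alpha}(\overline B_{3/2})$ with $V$ harmonic, $V(0)=0$, $\fint_{\partial B_1}V^2=1$, $N^V(0,1)\le C(n,\lambda,\Lambda)$; and since $U_{x_i,r_i}u_i\to T_{x_i,r_i}u_i$ in $C^{0,1}(B_1(0))$ we also get $P_i\to V$ in $L^2(\partial B_1)$. If $\deg P_i\to\infty$ then $\fint_{\partial B_{1/2}}P_i^2=2^{-2\deg P_i}\to 0$, forcing $V\equiv 0$ on $\partial B_{1/2}$, contradicting $\fint_{\partial B_{1/2}}V^2\ge 2^{-2C}>0$ (doubling for $V$). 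Hence $\deg P_i\le d_0(n,\lambda,\Lambda)$, and after a further subsequence $P_i\to P$ in $C^\infty(\overline B_1)$, with $P$ a normalized $(n-2)$-symmetric harmonic polynomial of degree $d\le d_0$ and $P=V$; then $\|T_{x_i,r_i}u_i-P_i\|_{C^{2d^2}(B_{1/2})}\to 0$, a contradiction. Fixing $M=M(n,\lambda,\Lambda)$ large enough that $C^M$ coefficients yield $C^{2d_0^2+1}$ solutions, this gives: for every $\epsilon_1>0$ there exist $\epsilon,\bar r$ depending on $n,\lambda,L,\Lambda$ such that \eqref{eq_estT} at a scale $r\le\bar r$ forces $\deg P\le d_0$ and $\|T_{x,r}u-P\|_{C^{2d^2}(B_{1/2})}<\epsilon_1$.

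To apply Lemma \ref{lemma_n-2Ha}, which is stated on $B_1(0)$, from control only on $B_{1/2}(0)$, rescale: with $\widehat T(y):=T_{x,r}u(y/2)$, homogeneity of $P$ gives $\|2^d\widehat T-P\|_{C^{2d^2}(B_1)}\le 2^{d_0}\|T_{x,r}u-P\|_{C^{2d^2}(B_{1/2})}<2^{d_0}\epsilon_1$; choosing $\epsilon_1=2^{-d_0}\epsilon_0$ with $\epsilon_0=\epsilon_0(n,d_0)$ the uniform threshold of Lemma \ref{lemma_n-2Ha} places $2^d\widehat T$ within $\epsilon_0$ of $P$ in $C^{2d^2}(B_1)$. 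Lemma \ref{lemma_n-2Ha} then bounds $H^{n-2}\bigl(\nabla(2^d\widehat T)^{-1}(0)\cap B_\rho(0)\bigr)$ by $c(n)(d-1)^2\rho^{n-2}$ for $\rho\le\bar r_0=\bar r_0(n,d_0)$; unwinding the dilation $\widehat T$ (which scales $H^{n-2}$ by a bounded power of $2$) gives $H^{n-2}\bigl(\nabla(T_{x,r}u)^{-1}(0)\cap B_\sigma(0)\bigr)\le c(n)(d-1)^2\sigma^{n-2}$ for $\sigma\le\bar r_0/2$. Passing this through $Q_x$ as in the first paragraph, with $\chi=\bar r_0/(2(1+\lambda)^{1/2})$ and $\bar r$ further shrunk so that $T_{x,r}u$ is defined on $B_2(0)$ and Lemma \ref{lemma_Nec} applies, yields $H^{n-2}\bigl(\nabla u^{-1}(0)\cap B_s(x)\bigr)\le c(n)(d-1)^2(1+\lambda)^{n-2}s^{n-2}\le C(n,\lambda,L,\Lambda)\,s^{n-2}$ for all $s\le\chi r$, which is the claim.

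The main obstacle is the compactness step: upgrading the $L^2$-closeness on the sphere $\partial B_1(0)$ postulated in \eqref{eq_estT} to $C^{2d^2}$-closeness on a full interior ball, uniformly in $x$, in $r$, and in the a priori unknown degree of $P$. Three ingredients make this work: the use of $T_{x,r}u$ in place of $U_{x,r}u$ -- the former solves a genuinely $C^M$ elliptic equation \emph{across} its centre, hence is $C^{2d_0^2+1}$ there, whereas the metric governing $U_{x,r}u$ is only Lipschitz at the origin; the uniform frequency bound of Lemma \ref{lemma_Nec}, which supplies both the a priori estimates and the doubling needed to rule out degree blow-up; and the normalization of $T_{x,r}u$ together with the $C^{0,1}$ convergence $U_{x,r}u\to T_{x,r}u$, which ties the hypothesis to the quantity one actually estimates. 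A subordinate point, handled by the degree bound and the normal form recorded after Corollary \ref{cor_ereg}, is that all constants in Lemma \ref{lemma_n-2Ha} a priori depend on $P$.
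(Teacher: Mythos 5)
Your proposal is correct and follows essentially the same route as the paper: replace $U_{x,r}u$ by $T_{x,r}u$ (which, unlike $U_{x,r}u$, enjoys uniform higher-order elliptic estimates) via their $C^{0,1}$ proximity for small $r$, upgrade the $L^2(\partial B_1)$ closeness to $C^{2d^2}$ closeness with a uniform degree bound coming from the frequency, and apply the $\epsilon$-regularity Lemma \ref{lemma_n-2Ha} just as in the harmonic case (Corollary \ref{cor_ereg}). The paper's own proof is a two-paragraph sketch deferring to the harmonic case; your compactness argument and the transfer through $Q_x$ supply the omitted details consistently with it.
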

\begin{proof}
As in the harmonic case, this lemma is a corollary of Lemma \ref{lemma_n-2Ha}.
The only delicate aspect is the generalization of the elliptic estimates.

Recall that the metric $g(\bar x)$ defined in Proposition \ref{prop_gij} is only Lipschitz at the origin, 
no matter the regularity of $a^{ij}$. Thus, it is not possible to obtain bounds on the higher order 
derivatives of $U_{x,r}u$. However, we do have uniform higher order elliptic estimates on $T_{x,r}$.
As $t$ approaches zero, $U_{x,t}$ converges in the Lipschitz sense to $T_{x,t}$. So, for $t$ small 
enough, condition \eqref{eq_estT} implies
\begin{gather}\label{eq_estU}
 \fint_{\partial B_1(0)} \abs{T_{x,r} u -P}^2 dS < \epsilon\, .
\end{gather}
By a simple application of Lemma \ref{lemma_n-2Ha}
(the $\epsilon$-regularity lemma)
the conclusion follows just as in the harmonic case.
\end{proof}

\begin{remark}
Following the same scheme as in the harmonic case it is now easy to prove Theorem \ref{th_Ha} for 
solutions to \eqref{eq_Lu}.
\end{remark}

\section{The Singular Set}\label{sec_sing}
With simple modifications, the quantitative stratification technique can also be used to derive estimates 
on the singular sets of solutions to \eqref{eq_Lu2} with \eqref{e:coefficient_estimates}.

Since constant functions do not solve \eqref{eq_Lu2}, we cannot use the normalized frequency function.
 For solutions to homogeneous elliptic equations with a zero order term, we can define the generalized frequency
 function $F(x,r)$ by
\begin{gather}
F(u,\bar x,g,r)=\frac{r\int_{B(g(\bar x),\bar x,r)}\norm{\nabla u}_{g(\bar x)}^2
+ u\Delta_{g(\bar x)} (u )dV_{g(\bar x)}}{\int_{\partial B(g(\bar x),\bar x,r)} u^2 dS_{g(\bar x)}}\, .
\end{gather}
This function turns out to be almost monotone as a function of $r$ on $(0,r_0(\lambda))$ if $u(\bar x)=0$.

Once this is proved, it is not difficult to see that a theorem similar to \ref{t:crit_lip}
 holds for solutions to this kind of elliptic equation, although in this case, the $(n-2+\eta)$-Minkowski 
type estimate holds
on  the \textit{singular} set, not the \textit{critical} set.
\begin{theorem}\label{th_si1}
Let $u:B_1(0)\to \R$ be a solution to \eqref{eq_Lu} with \eqref{e:coefficient_estimates}
 and such that $\bar N^u(0,1)\leq \Lambda$. For every $\eta>0$, there exists a positive 
$C=C(n,\lambda,\Lambda,\eta)$ such that
\begin{gather}
 {\rm Vol}\qua{B_r \ton{\Cr(u)\cap u^{-1}(0)}\cap B_{1/2}(0)}\leq C r^{2-\eta}\, .
\end{gather}
\end{theorem}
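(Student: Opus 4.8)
The setting of Theorem \ref{th_si1} is a solution $u$ of \eqref{eq_Lu2}, the homogeneous equation with a zero--order term $c(x)u$. Since constants are no longer solutions, one cannot subtract $u(\bar x)$, and the relevant quantity is the \emph{unnormalized} generalized frequency $F(u,\bar x,g,r)$ whose denominator is $\int_{\partial B(g(\bar x),\bar x,r)}u^2$, which behaves like the harmonic frequency only at points where $u(\bar x)=0$. The plan is to transplant the entire apparatus of Sections \ref{sec_ell}--\ref{sec_sing} --- generalized frequency, almost monotonicity, cone--splitting, frequency decomposition --- to this setting, performing every frequency estimate only at base points on $u^{-1}(0)$, and then to conclude exactly as Theorem \ref{t:crit_lip} is deduced from Theorem \ref{t:quantstrat}, with the critical set replaced throughout by $\Cr(u)\cap u^{-1}(0)$.

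First I would rewrite \eqref{eq_Lu2} in Riemannian form with respect to the metric $g(\bar x)$ of Proposition \ref{prop_gij}, obtaining $\Delta_{g(\bar x)}u=\ps{B}{\nabla u}_{g(\bar x)}+\tilde c\,u$ with $B$ as in \eqref{eq_B} and $\abs{\tilde c}\leq C(\lambda)$, and establish the analogue of Proposition \ref{prop_ID}: for $u(\bar x)=0$ the Poincar\'e/divergence estimate used there still gives $H(r)\leq CrD(r)$ (hence $\int_{B(r)}u^2\leq Cr^2D(r)$), so the extra interior term $\int_{B(r)}\tilde c\,u^2$ appearing in $I$ is $O(r^2D(r))$ and does not disturb $I\asymp D$; thus $F(r)\geq c(n,\lambda)>0$ for $r\leq r_0(n,\lambda)$. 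The central step is then the analogue of Theorem \ref{th_Nellmon}: for $x$ with $u(x)=0$, $e^{Cr}F(r)$ is monotone nondecreasing on $(0,r_0)$. I would repeat the computation of that proof verbatim; the only new contribution is that $I'_\beta(1)$ now also contains a term $t^2\int_{\partial B(g^t,0,1)}\tilde c\,U^2\,dS$, which by the trace bound $\int_{\partial B(g^t,0,1)}U^2\,dS\leq CH(1)\leq CI(1)$ is $O(t^2I(1))$ and is absorbed into the $-tC(n,\lambda)$ error in both Case 1 and Case 2, while the identity $I(r)=\int_{\partial B(g^t,0,r)}UU_n\,dS$ and the Cauchy step \eqref{eq_cau} are unaffected because the $\tilde c\,U^2$ term sits inside $U\Delta_gU$ and contributes nothing to the boundary form. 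Checking that the zero--order term produces only errors of size $O(r)$ (or better) controlled by $rI$, and in particular that $I$ stays positive and comparable to $D$ for small $r$, is the main --- if not deep --- obstacle.

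With almost monotonicity in hand, the remaining inputs follow from the usual contradiction/compactness arguments of Section \ref{sec_ell}, now carried out at base points $x_i$ with $u_i(x_i)=0$: a blow--up at scale $t_i\to0$ rescales the first-- and zero--order terms of \eqref{eq_Lu2} by $t_i$ and $t_i^2$, so the limit is a nonconstant harmonic function vanishing at the origin, and Lemma \ref{lemma_strong2bar}, Theorem \ref{th_N2hom} and Corollary \ref{cor_symsum} supply the contradictions. This delivers: the analogue of Lemmas \ref{lemma_prestrongell}--\ref{lemma_Nec}, namely that $\bar N^u(0,1)\leq\Lambda$ implies $F^u(x,r)\leq C(n,\lambda,\Lambda)$ for all $x\in B_{1/2}(0)\cap u^{-1}(0)$ and $r\leq r_1$; the rigidity of Theorem \ref{th_N2homell} (a small drop $F(x,r)-F(x,\gamma r)<\epsilon$ forces $(0,\eta,r,x)$--symmetry); the cone--splitting Corollary \ref{cor_symsumell}; and the linear $\epsilon$--regularity Proposition \ref{prop_qcritell}, with a uniform $\eta=\eta(n,\lambda,\Lambda)$ such that $(n-1,\eta,r,x)$--symmetry excludes critical points of $u$ in $B_{r/2}(x)$.

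Finally I would run the frequency decomposition of Section \ref{sec_ell} on $\cS^k_{\eta,\gamma^j}\cap u^{-1}(0)\cap B_{1/2}(0)$, taking every ball of every inductively constructed minimal covering to be centered at a point of $u^{-1}(0)$ --- legitimate because the set being covered lies in $u^{-1}(0)$, which is precisely where $F$ is almost monotone. The Decomposition Lemma \ref{lemma_dec} and Lemmas \ref{lemma_K}, \ref{lemma_cov} then go through word for word, with Lemma \ref{lemma_Nec}, Theorem \ref{th_N2homell} and Corollary \ref{cor_symsumell} in place of their harmonic counterparts; since $F$ is only almost monotone the telescoping bound \eqref{eq_sum}--\eqref{eq_estK} picks up a harmless additive constant, and the finitely many scales $\gamma^i>r_0$ enlarge the covering count only by a factor $D'(n,\lambda,\eta,\Lambda)$. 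This gives $\Vol\ton{B_{\gamma^j}(\cS^k_{\eta,\gamma^j}\cap u^{-1}(0))\cap B_{1/2}(0)}\leq C(n,\lambda,\Lambda,\eta)(\gamma^j)^{n-k-\eta}$, hence the same estimate for every $r$. Taking $k=n-2$ and using, exactly as in Corollary \ref{cor_main}, that the uniform $\eta$ of the $\epsilon$--regularity yields $\Cr(u)\cap u^{-1}(0)\cap B_{1/2}(0)\subseteq\cS^{n-2}_{\eta,r}$ for $r$ small (the trivial bound $\Vol\leq\Vol(B_{1/2}(0))$ disposing of large $r$), one covers $\Cr(u)\cap u^{-1}(0)$ by at most $Cr^{-(n-2)-\eta}$ balls of radius $r$; doubling their radii bounds the volume of the $r$--tube by $Cr^{-(n-2)-\eta}\cdot r^{n}=Cr^{2-\eta}$, which is Theorem \ref{th_si1}.
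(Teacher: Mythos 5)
Your proposal is correct and follows exactly the route the paper indicates: introduce the unnormalized generalized frequency $F$, prove its almost monotonicity at base points where $u(\bar x)=0$, and rerun the compactness lemmas and frequency decomposition with every ball centered on $u^{-1}(0)$, concluding as in Theorem \ref{t:crit_lip}. In fact the paper only sketches this in two sentences, so your write-up — in particular the explicit identification of the new $\tilde c\,u^2$ terms in $I$ and $I'_\beta$ and how they are absorbed into the $O(t)$ error — supplies more detail than the paper itself.
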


We also point our that the effective $(n-2)$-dimensional Hausdorff measure estimate 
is easily generalized to the singular set in this context, although even in this case, we need 
to add some regularity requirements on the coefficients of the equation. 
With different techniques, the $(n-2)$-dimensional Hausdorff measure result has already 
been proved in \cite[Theorem 1.1]{hanhardtlin}; see also \cite[Theorem 7.2.1]{hanlin}.

\begin{remark}
 As noted in \cite[Remark at page 362]{hoste}, it is not possible to get effective bounds on the 
critical sets of solutions to \eqref{eq_Lu2} with \eqref{e:coefficient_estimates}. Indeed, every 
closed subset of $\R^n$ can be the critical set of such a function.
\end{remark}

\bibliographystyle{aomalpha}
\bibliography{ChNaVa_bib}

\end{document}